\newtheorem{theorem}{Theorem}[section] 
\newtheorem{lemma}[theorem]{Lemma}     
\newtheorem{corollary}[theorem]{Corollary}
\newtheorem{proposition}[theorem]{Proposition}
\newtheorem{definition}[theorem]{Definition}
\newtheorem{remark}[theorem]{Remark}
\newtheorem{example}[theorem]{Example}
\newcommand{\lgarr}{\longrightarrow}
\newcommand{\xarr}{\xrightarrow}
\newcommand{\cat}[1]{\operatorname{\mathsf{#1}}}
\newcommand{\iso}{\stackrel{\sim}{\rightarrow}}
\newcommand{\opn}{\operatorname}
\newcommand{\mcal}[1]{\mathcal{#1}}
\newcommand{\Hom}{\operatorname{Hom}}
\newcommand{\Mod}{\operatorname{\mathsf{Mod}}}
\newcommand{\ten}{\otimes}
\newcommand{\z}{\mathbb{Z}}
\newcommand{\n}{\mathbb{N}}
\newcommand{\qten}{\overset{\bullet q}{\otimes}}
\newcommand{\qhom}{\Hom^{\bullet q}}
\newcommand{\hclim}{\underset{\lgarr}{\cat{hocolim}}}
\newcommand{\ma}{\mathcal{A}}
\newcommand{\mb}{\mathcal{B}}
\newcommand{\md}{\mathbb{D}}
\newcommand{\h}{\operatorname{H}}
\newcommand{\Lqten}{\overset{\mathbb{L} \bullet q}{\otimes}}
\title[NDG categories]
{Derived categories of NDG categories }
\author{Jun-ichi Miyachi and Hiroshi Nagase}
\date{\today}
\address{J. Miyachi: Department of Mathematics, Tokyo Gakugei
University, Koganei-shi, Tokyo, 184-8501, Japan}
\email{miyachi@u-gakugei.ac.jp}
\address{H. Nagase: Department of Mathematics, Tokyo Gakugei
University, Koganei-shi, Tokyo, 184-8501, Japan}
\email{nagase@u-gakugei.ac.jp}
\subjclass{16E45 (primary), 18E30, 16E35 (secondary).} 
\keywords{NDG category, homotopy category, derived category, triangulated category}
\begin{document}

\maketitle

\tableofcontents

\begin{abstract}
We study $N$-differential graded ($NDG$) categories and their the derived  categories.
First, we introduce $N$-differential modules over an $NDG$ category $\ma$.
Then we show that the category $\cat{C}_{Ndg}(\ma)$ of $N$-differential $\ma$-modules
is a Frobenius category, and that its homotopy category $\cat{K}_{Ndg}(\ma)$ is a triangulated
category.
Second, we study the properties of the derived category $\cat{D}_{Ndg}(\ma)$ and 
give triangle equivalences of Morita type between derived categories of $NDG$ categories.
Finally, we show $\cat{D}_{Ndg}(\ma)$ is  triangle equivalent to the derived category of some ordinary DG category.
\end{abstract}


\setcounter{section}{-1}
\section{Preliminaries}\label{intro}

The notion of $N$-complexes, that is, graded objects with $N$-differential $d$
($d^{N}=0$), was introduced by Kapranov (\cite{Ka}).
He also described the relation between an $N$-differential and a primitive $N$-th root $q$ of unity.
Dubois-Violette studied homological properties of $N$-complexes (\cite{D}). 
Afterwards, Iyama, Kato and Miyachi studied derived categories of $N$-complexes of an abelian category (\cite{IKM3}).
Dubois-Violette and Kerner introduced $N$-differential $\n$-graded algebras, and studied homological properties (\cite{D2}, \cite{DK}). 
In this article, we study the derived  category of an $N$-differential $\z$-graded ($NDG$) category $\ma$.
In particular we study the homotopy category $\cat{K}_{Ndg}(\ma)$ of right $NDG$ $\ma$-modules
and the derived category $\cat{D}_{Ndg}(\ma)$.

Let $k$ be a commutative ring, $q$ a primitive $N$-th root of unity of $k$, $U, V$ $N$-differential complex of $k$-modules.
Let 
$U\qten V$ is the tensor product
$(\coprod_{n \in \z}U\overset{n}{\ten}V, d_{U\qten V}(u\otimes v))$
where
$U\overset{n}{\ten}V=\coprod_{r+s=n}U^r\otimes V^s$
with
an $N$-differential
$
d_{U\qten V}(u\otimes v)=d_U(u)\otimes v+q^r u\otimes d_V(v)
$
for any $u \in U^r$ and $v\in V$.
Then $U\qten V$ is also an $N$-complex (\cite{Ka}).
An $N$-complex $V\qten U$ is also defined.
But $V\qten U$ is not isomorphic to $U\qten V$ in case of $N>2$ (Lemma \ref{qten01}).
Moreover it is difficult to find the tensor product of $NDG$-algebras (\cite{Si}).
This situation implies the difficulty to deal with $NDG$ modules, especially $NDG$ bi-modules as functors from $NDG$ categories.
Therefore we deal with $NDG$ modules using module-theoretical language (Definitions \ref{def:NDGmod}, \ref{NDG-bimodule}).

In Section \ref{qnumber} we give some formulas on $q$-numbers.
We use these formulas to describe $NDG$ modules with complicated scalar multiplication of $NDG$ categories.

In Section \ref{NDG}, we define an $N$-differential graded category $\ma$ with respect to $q$ (=$N_qDG$ category), and 
a right $NDG$ $\ma$-module, that is a graded module with right scalar multiplication of $\ma$.
We show the following two results are obtained similarly to the case of $N$-complexes (\cite{IKM3}).
First, we show that the category  $\cat{C}_{Ndg}(\ma)$ of right $NDG$ $\ma$-modules is a Frobenius category.

\begin{theorem}[Theorem \ref{Kalgtricat}]
The homotopy category $\cat{K}_{Ndg}(\ma)$ is an algebraic triangulated category.
\end{theorem}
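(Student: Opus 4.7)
The plan is to follow the classical pattern: realise $\cat{K}_{Ndg}(\ma)$ as the stable category of a Frobenius exact category, and then invoke the (Happel--Keller) theorem that such stable categories are algebraic triangulated. This should follow the scheme used for ordinary $N$-complexes in \cite{IKM3}, with the extra bookkeeping coming from the $q$-twisted scalar multiplication of $\ma$.

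First I would check that $\cat{C}_{Ndg}(\ma)$ is abelian: given a morphism of right $NDG$ $\ma$-modules, the componentwise kernel and cokernel inherit an $N$-differential, and compatibility with the scalar action of $\ma$ reduces to the $q$-number identities of Section \ref{qnumber}. The exact structure on $\cat{C}_{Ndg}(\ma)$ is then the degreewise split one, which is natural when working with graded modules over a graded category.

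Second, and this is the main technical point, I would construct enough projective-injectives via an $N$-disc functor. For each right $NDG$ $\ma$-module $M$ and each integer $n$ one should define a contractible module $D_{n}(M)$ whose underlying graded object is a direct sum of $N$ shifted copies of $M$, glued by the identity through the $N$-differential, so that $D_{n}(M)$ fits into a short exact sequence $0 \to M \to D_{n}(M) \to M' \to 0$ whose identity on $M$ admits an $N$-homotopy retraction. One then verifies that every $NDG$ $\ma$-module embeds into (and is a quotient of) a direct sum of such $D_{n}(M)$'s, that these discs are simultaneously projective and injective in the chosen exact structure, and that conversely every projective-injective is a summand of such a disc. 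This produces the Frobenius structure on $\cat{C}_{Ndg}(\ma)$ promised in the sentence preceding the theorem.

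Third, I would identify the $N$-homotopy relation used to define $\cat{K}_{Ndg}(\ma)$ with the ideal of morphisms factoring through a projective-injective, which is the standard translation once the discs $D_{n}(M)$ have been constructed: an $N$-homotopy $s$ from $f$ to $g$ gives a factorisation of $f - g$ through $D_{0}(M)$, and conversely a factorisation through a disc produces such an $s$. With this in hand, $\cat{K}_{Ndg}(\ma)$ is exactly the stable category of a Frobenius category, hence algebraic triangulated in the sense of Keller.

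The main obstacle I anticipate is the second step: verifying that the $q$-weighted $\ma$-action is genuinely compatible with the $D_{n}$ construction and that the natural embedding $M \hookrightarrow D_{n}(M)$ is a morphism of $NDG$ $\ma$-modules, not just of $N$-complexes of $k$-modules. The $q$-number identities from Section \ref{qnumber} should make this mechanical once they are applied carefully, but that is where the calculational weight of the proof will sit; the final appeal to Happel's theorem is then formal.
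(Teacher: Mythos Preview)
Your proposal is correct and follows essentially the same route as the paper: the paper's disc objects are the $Q_rU_0(X)$ of Lemma~\ref{QX}, their projective-injectivity is obtained via the adjunctions $Q_{-r}\dashv U_r\dashv Q_{-r+N-1}$ of Proposition~\ref{QUQ} (the corollary immediately preceding Theorem~\ref{Kalgtricat}), and the proof of the theorem itself is exactly your third step, showing $F\in\opn{B}^0_{(N-1)}\qhom_{\ma}(X,Y)$ iff $F$ factors through $\eta_X:X\to Q_{N-1}U_0X$. The calculational weight indeed sits where you anticipate, in Lemma~\ref{QX} and Lemma~\ref{rhoeta}, which verify that the $q$-twisted $\ma$-action on the discs is well-defined and that $\eta_X$, $\pi_X$ are genuine morphisms in $\cat{C}_{Ndg}(\ma)$.
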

Moreover, $\cat{C}_{Ndg}(\ma)$ has two auto-equivalences: the shift functor $\theta_q$ of grading and the suspension functor $\Sigma$ 
as a triangulated category. Then we have the following relation.

\begin{theorem}[Theorem \ref{Sigmatheta}]
\[
\Sigma ^2 \simeq \theta^N_q ~\mbox{in}~ \cat{K}_{Ndg}(\ma).
\]
\end{theorem}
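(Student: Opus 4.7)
The plan is to adapt the proof of the analogous statement for $N$-complexes of an abelian category from \cite{IKM3}, since Theorem~\ref{Kalgtricat} places us in the same Frobenius-categorical setting. In such a category, the suspension is computed on objects as $\Sigma X \simeq I(X)/X$ for any monomorphism $X \inj I(X)$ with $I(X)$ injective-projective, so the theorem will follow from writing down a sufficiently explicit injective envelope and iterating.

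First, I would identify the injective-projective generators of $\cat{C}_{Ndg}(\ma)$ explicitly. As in the $N$-complex case, these are the ``standard contractibles'' obtained from a graded $\ma$-module $M$ by building an $N$-fold ladder of identities with $q$-twisted differentials (a module-theoretic version of Kapranov's $N$-cone). Every $X$ embeds canonically in such a contractible $I(X)$ built from its underlying graded module $X^{\flat}$; this is the construction that powers the proof of Theorem~\ref{Kalgtricat}, so it is already available.

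Second, I would realize the passage from $X$ to $\Sigma^2 X$ via an explicit four-term exact sequence
\[
0 \lgarr X \lgarr I_0 \lgarr I_1 \lgarr \theta_q^N X \lgarr 0
\]
in $\cat{C}_{Ndg}(\ma)$, with $I_0$ and $I_1$ injective-projective. The map $X \to I_0$ is the canonical injective envelope; the map $I_1 \epto \theta_q^N X$ is the projection onto the ``top'' component of the standard contractible; and the middle map $I_0 \to I_1$ is assembled from identity maps on the underlying graded module, scaled by appropriate $q$-powers, so that its composition with either of its neighbors factors through $d_X^{N} = 0$. Splitting this sequence into two short exact sequences gives $\Sigma X \simeq \opn{Ker}(I_1 \epto \theta_q^N X)$ and hence $\Sigma^2 X \simeq \theta_q^N X$ in $\cat{K}_{Ndg}(\ma)$.

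The main obstacle is the accurate tracking of the $q$-scalars: both the $N$-differential and the shift $\theta_q$ introduce factors of the form $q^r$ on elements of degree $r$, and exactness at each spot in the four-term sequence reduces to combinatorial identities among $q$-numbers, for which the formulas collected in Section~\ref{qnumber} are tailor-made. The hypothesis $q^N = 1$ is precisely what makes these factors collapse so that the final cokernel is $\theta_q^N X$ on the nose rather than merely up to a $q$-twist. Naturality in $X$ of each piece of the construction is immediate, so the object-wise isomorphism descends to the required natural equivalence of endofunctors on $\cat{K}_{Ndg}(\ma)$.
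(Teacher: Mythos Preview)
Your approach is correct and close in spirit to the paper's, but the paper takes a slightly more direct route that is worth noting. Rather than assembling a four-term exact sequence and splitting it, the paper writes down explicit formulas for \emph{both} $\Sigma$ and $\Sigma^{-1}$ as endofunctors of $\cat{C}_{Ndg}(\ma)$ (via the two short exact sequences of Lemma~\ref{Sigmaex}, which are precisely the ``standard contractible'' embeddings you describe), and then observes by direct inspection of those formulas that $\Sigma X = \Sigma^{-1}\theta_q^N X$ holds on the nose in $\cat{C}_{Ndg}(\ma)$, not merely in the homotopy category. Since $q^N=1$ makes $\theta_q^N$ act trivially on differentials, the underlying graded modules, differentials, and $\ma$-actions of $\Sigma X$ and $\Sigma^{-1}\theta_q^N X$ literally coincide; the whole proof is then one line. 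Your four-term sequence is exactly the splice of the paper's two short exact sequences (the second one applied to $\theta_q^N X$), so the content is the same—the paper just packages the key identity $\Sigma \simeq \Sigma^{-1}\theta_q^N$ at the level of $\cat{C}_{Ndg}(\ma)$, which avoids having to track the $q$-scalars through a middle map $I_0 \to I_1$ by hand.
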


Furthermore, we define an $NDG$ $\mb$-$\ma$-bimodule $M$, and we have triangle functors 
\[\begin{aligned}
\qhom_{\ma}(M, -): \cat{K}_{Ndg}(\ma) \to \cat{K}_{Ndg}(\mb), &-\qten_{\mb}M: \cat{K}_{Ndg}(\mb) \to \cat{K}_{Ndg}(\ma) .
\end{aligned}\]

\begin{theorem}[Theorems \ref{adjNDG}]
For an $NDG$ $\mb$-$\ma$-bimodule $M$, 
$-\qten_{\mb} M$ is the left  adjoint of $\qhom_{\ma}(M, -)$.
\end{theorem}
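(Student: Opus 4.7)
The strategy is to first establish the adjunction at the level of the underlying (ungraded, non-differential) bimodule structures by the classical currying isomorphism, then upgrade it to a $\z$-graded isomorphism, and finally show compatibility with the $N$-differentials so that the bijection restricts from cycles to cycles (and from null-homotopic maps to null-homotopic maps), yielding an adjunction on $\cat{K}_{Ndg}$.

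First, for $X \in \cat{C}_{Ndg}(\mb)$ and $Y \in \cat{C}_{Ndg}(\ma)$, I would define the unit/counit at the chain level by the usual formulas
\[
\varphi \colon \Hom_{\ma}\!\bigl(X\qten_{\mb}M,\,Y\bigr) \lgarr \Hom_{\mb}\!\bigl(X,\,\qhom_{\ma}(M,Y)\bigr),\qquad \varphi(f)(x)(m)=f(x\otimes m),
\]
and its inverse by uncurrying. As in the classical DG case, the verification that $\varphi$ is a graded bijection is a bookkeeping of indices: a homogeneous element $x\otimes m$ of degree $r+s$ ($x\in X^{r}$, $m\in M^{s}$) corresponds under $\varphi$ to a degree-$r$ element of $X$ sent to a degree-$s$ element of $\qhom_{\ma}(M,Y)$, so graded degrees are preserved automatically.

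The central step — and the place where the $q$-twist enters — is checking that $\varphi$ commutes with the $N$-differentials. Here I would use the formula $d_{U\qten V}(u\otimes v)=d_U(u)\otimes v+q^{r}u\otimes d_V(v)$ from the introduction and the analogous defining formula for the differential on $\qhom_{\ma}(M,Y)$, which must introduce a compensating power of $q$ in order that the hom complex itself be an $N$-complex. The calculation reduces to verifying an identity of the form
\[
d_Y\bigl(f(x\otimes m)\bigr)=f\bigl(d_X(x)\otimes m\bigr)+q^{r}f\bigl(x\otimes d_M(m)\bigr)
\]
on the left and an analogous currying identity on the right; the two match exactly because the $q^{r}$-factor on the tensor side is reproduced by the $q^{r}$-factor in the scalar multiplication convention fixed for $\qhom$ in Section~\ref{NDG}. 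This shows $\varphi$ restricts to a bijection on degree-zero cycles, i.e.\ on chain maps.

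Finally, to descend the adjunction to $\cat{K}_{Ndg}$, I would use the $q$-number identities from Section~\ref{qnumber} to define an explicit homotopy-transfer: if $f=d_Y\circ h + \alpha\, h\circ d_{X\qten M}$ (the Frobenius/homotopy relation appropriate to the $N$-differential setting, as used for the triangulated structure of $\cat{K}_{Ndg}(\ma)$), then $\varphi(f)$ satisfies the corresponding relation with $\varphi(h)$, because the $q$-factor in $\varphi$'s formula is the same scalar that defined the homotopy. Naturality of $\varphi$ in $X$ and $Y$ is straightforward from the formula. The main obstacle I anticipate is purely bookkeeping: making sure the conventions for the $q$-twist on $\qten_{\mb}$ (acting on the right factor), on $\qhom_{\ma}$ (acting on the argument or the codomain), and on the bimodule scalar multiplications of $M$ are all mutually consistent, so that the $q^{r}$ in $\varphi(f)$'s differential cancels against the $q^{r}$ coming from $\qten_{\mb}M$; once the conventions are pinned down, the adjunction identity is a direct computation.
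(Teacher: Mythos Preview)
Your overall plan matches the paper's: define the currying map explicitly, check it respects the $\mb$-action and commutes with the differentials, and conclude. The paper, unlike you, does not take bijectivity of the underlying graded map for granted; it verifies it by reducing to representables $\theta_q^i B\,\hat{}$ via Lemmas~\ref{qtenthate} and~\ref{tenM(B)} and then running a projective-presentation argument in $\cat{Gr}^0(\mb)$. Your ``classical currying'' shortcut is defensible, but you should at least say why it survives the passage from a single ring to a $k$-linear category.

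There is one genuine error. Your homotopy-descent step writes the null-homotopy relation as $f=d_Y\circ h + \alpha\, h\circ d_{X\qten M}$. That is the ordinary DG formula; for $N>2$ it is wrong. In $\cat{K}_{Ndg}$ the null-homotopic maps are $\opn{B}^0_{(N-1)}$, i.e.\ those of the form
\[
f=d^{\{N-1\}}_{\qhom}(h)=\sum_{l=0}^{N-1} d_Y^{\{N-1-l\}}\circ h\circ d_{X\qten M}^{\{l\}}
\]
for some $h$ of degree $1-N$ (cf.\ Remark~\ref{AbNdgA}). More importantly, this whole step is unnecessary: once you prove that $\varphi$ is an isomorphism \emph{in $\cat{C}_{Ndg}(k)$} (i.e.\ a graded bijection commuting with the $N$-differentials), the adjunctions on $\cat{C}_{Ndg}$ and $\cat{K}_{Ndg}$ follow immediately by applying the functors $\opn{Z}^0_{(1)}$ and $\opn{H}^0_{(1)}$. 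So the cleanest route---and the one the paper takes---is to verify $d(\varphi(F))=\varphi(d(F))$ for \emph{all} homogeneous $F$, not just to check that cycles go to cycles; your displayed identity is the condition that $f$ itself is a chain map, which is not the same computation.
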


In section \ref{DGNDG}, 
we describe the derived categories of $NDG$ modules, and the derived functors induced by bimodules,
especially the necessary and sufficient conditions for these functors to be  equivalences.

\begin{theorem}[Theorem \ref{projinj}] 
For an $N_qDG$ category $\ma$, we have the following triangle equivalent
\[
\cat{D}_{Ndg}(\ma) \simeq \cat{K}_{Ndg}^{\mathbb{P}}(\ma) \simeq \cat{K}_{Ndg}^{\mathbb{I}}(\ma).
\]
Here $\cat{K}_{Ndg}^{\mathbb{P}}(\ma)$ (resp., $\cat{K}_{Ndg}^{\mathbb{I}}(\ma)$) 
is the smallest full triangulated subcategory of $\cat{K}_{Ndg}(\ma)$ consisting of
$\theta_q^nA\,\hat{}$ (resp. $ \theta^n_q \mathbb{D}~\hat{}A$) for all $n \in \z$ and $A \in \ma$
which is closed under taking coproducts (resp. products), where $A\,\hat{}=\ma(-,A),\ \hat{}A=\ma(A,-)$, 
$\md \hat{}A=\qhom _k(\hat{}A, E)$ for an injective cogenerator $E$ for $k$-modules.
\end{theorem}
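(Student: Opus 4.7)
The plan is to adapt the standard B\"okstedt--Neeman--Keller argument for DG categories to the $N$-differential setting: produce homotopy projective resolutions built out of the representables $\theta_q^n A\,\hat{}$, invoke a general criterion for triangulated subcategories to obtain the first equivalence, and dualise for the second.

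First, I would establish a Yoneda-type identification of the form
\[
\Hom_{\cat{C}_{Ndg}(\ma)}(\theta_q^n A\,\hat{}, X)\;\cong\;X^{-n}(A)
\]
(possibly up to a $q$-scalar twist coming from the formulas of Section~\ref{qnumber}) and pass to the homotopy level to identify $\Hom_{\cat{K}_{Ndg}(\ma)}(\theta_q^n A\,\hat{}, X)$ with a suitable homology group of $X(A)$. In particular each $\theta_q^n A\,\hat{}$ kills acyclics, so it is h-projective, and since the class of h-projective objects forms a triangulated subcategory closed under coproducts, the whole of $\cat{K}_{Ndg}^{\mathbb{P}}(\ma)$ consists of h-projectives. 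This makes the canonical functor $\cat{K}_{Ndg}^{\mathbb{P}}(\ma)\to\cat{D}_{Ndg}(\ma)$ fully faithful.

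The main step is essential surjectivity, which I would obtain by constructing a semi-free resolution $P\to X$ for an arbitrary $X\in\cat{C}_{Ndg}(\ma)$. Start from the surjection $\bigoplus_i \theta_q^{n_i}A_i\,\hat{}\twoheadrightarrow X$ supplied by Yoneda applied to generators of each graded piece $X(A)$; form the mapping cone on the resulting map to $X$ and iterate the procedure transfinitely. The colimit $P$ is a filtered colimit of iterated extensions of coproducts of representables, so it lies in $\cat{K}_{Ndg}^{\mathbb{P}}(\ma)$, and the construction is arranged so that $P\to X$ is a quasi-isomorphism. For the injective half, applying $\md\,\hat{}(-)=\qhom_k(\hat{}(-),E)$ and using the adjunction of Theorem~\ref{adjNDG} converts semi-free resolutions over $\ma^{\opn{op}}$ into semi-injective coresolutions of $X$ built from the objects $\theta_q^n\md\,\hat{}A$, and the same abstract argument then yields the second triangle equivalence.

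The main obstacle I expect is verifying the quasi-isomorphism in the $N$-differential context. In ordinary DG one kills the kernel of $\h^n(P)\to\h^n(X)$ cocycle by cocycle, but for $N$-complexes there are $N-1$ distinct amplitudes of homology $\h^{(s)}_n$ (in the sense of~\cite{IKM3}) that a single attaching cell of type $\theta_q^n A\,\hat{}$ must interact correctly with. The relation $\Sigma^2\simeq\theta_q^N$ from Theorem~\ref{Sigmatheta} ensures that the book-keeping closes up at the homotopy level, but carefully tracking the $q$-powers introduced by the scalar multiplication in each mapping cone will be the main technical burden.
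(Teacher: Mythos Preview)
For part~(1) your outline is essentially the paper's argument, stated in the hands-on Keller style rather than via the abstract Brown representability of \cite{Kr0} that the paper invokes. The worry you flag about the $N-1$ homology amplitudes dissolves once you use Proposition~\ref{HhomA}: the representables $\theta_q^{-n}A\,\hat{}$ detect precisely $\h^n_{(1)}X(A)$, and by \cite[Proposition~1.5]{Ka} vanishing of all $\h^n_{(1)}$ already forces full $N$-acyclicity. So your tower need only kill the $(1)$-homology of the cone, and no delicate $q$-bookkeeping across amplitudes is required.

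Part~(2), however, has a genuine gap. Dualising a semi-free resolution over $\ma^{op}$ produces a coresolution of $\md Y$ for some left module $Y$, but an arbitrary right $NDG$ $\ma$-module $X$ is not of the form $\md Y$, so this does not reach every $X$. (There is also the wrinkle, noted in Remark~\ref{bimod}, that $\ma^{op}$ is an $N_{q^{-1}}DG$ category rather than an $N_qDG$ one.) More fundamentally, the naive dual tower---attach products of $\theta_q^n\md\,\hat{}A$ and pass to a homotopy limit---fails because the objects $\md\,\hat{}A$ are not co-compact, so one cannot control $\Hom_{\cat{K}_{Ndg}(\ma)}(-,\theta_q^n\md\,\hat{}A)$ of the limit. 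The paper sidesteps this by establishing in Theorem~\ref{gencogen}(2) that $\{\theta_q^nA\,\hat{}\}$ is a set of \emph{symmetric} generators, via the computation $\Hom_{\cat{K}_{Ndg}(\ma)}(X,\theta_q^n\md\,\hat{}A)\simeq\Hom(\h^n_{(1)}X(A),E)$; this is exactly the hypothesis of Krause's dual Brown representability \cite[Theorem~B]{Kr0}, which then furnishes the required $\cat{K}_{Ndg}^{\mathbb{I}}(\ma)$-approximation of an arbitrary $X$. That symmetric-generation step is the missing ingredient in your plan.
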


\begin{theorem}[Theorem \ref{Morita}]
Let $\ma$ and $\mb$ be small $N_qDG$ categories. 
The following hold for an $NDG$ $\mb$-$\ma$-bimodule $M$.
\begin{enumerate}
\item The functor $-\Lqten_{\mb}M : \cat{D} _{Ndg}(\mb) \to \cat{D} _{Ndg}(\ma)$
is the left adjoint of the functor $\mathbb{R}\Hom_{\ma}^{\bullet q}(M, -)$.
\item The following are equivalent.
\begin{enumerate}
\item $-\Lqten_{\mb}M$ is a triangle equivalence.
\item $\mathbb{R}\Hom_{\ma}^{\bullet q}(M, -)$ is a triangle equivalence.
\item $\{ \Sigma ^i \theta^{-n}_q B\,\hat{}\otimes _{\mb}^{\bullet q}M\; |\; B\in \mb, 0 \leq n \leq N-1, i \in \z \}$
is a set of compact generators for $\cat{D} _{Ndg}(\ma)$, and
the canonical morphism 
\[
\Hom_{\cat{D}(\mb)}(\theta^{j}B_1\hat{}, \Sigma^{i} B_2\hat{}) \iso 
\Hom_{\cat{D}_{Ndg}(\ma)}(\theta^{j}B_1\hat{} \qten_{\mb}M, \Sigma^{i} B_2\hat{} \qten_{\mb}M)
\]
is an isomorphism for $i=0,1$, any $j$ and any $B_1, B_2 \in \mb$.
\end{enumerate}
If $k$ is a field, then the above  are equivalent to
\begin{enumerate}
\item[(d)] $\{ \Sigma ^i \theta^{-n}_q B\,\hat{}\otimes _{\mb}^{\bullet q}M\; |\; B\in \mb, 0 \leq n \leq N-1, i \in \z \}$
is a set of compact generators for $\cat{D} _{Ndg}(\ma)$, 
the canonical map 
\[
\mb(B_1, B_2) \to 
\mathbb{R}\Hom_{\ma}^{\bullet q}(B_1\hat{} \qten_{\mb}M, B_2\hat{} \qten_{\mb}M)
\]
is an isomorphism in $\cat{D}_{Ndg}(k)$ for any $B_1, B_2 \in \mb$.
\end{enumerate}
\end{enumerate}
\end{theorem}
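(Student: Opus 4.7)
For part~(1) the plan is to lift the non-derived adjunction of Theorem~\ref{adjNDG} to the derived level via the resolutions produced by Theorem~\ref{projinj}. Each object of $\cat{D}_{Ndg}(\mb)$ has a representative in $\cat{K}_{Ndg}^{\mathbb{P}}(\mb)$ and each object of $\cat{D}_{Ndg}(\ma)$ has one in $\cat{K}_{Ndg}^{\mathbb{I}}(\ma)$; $\mathbb{P}$-resolutions are acyclic for $-\qten_{\mb}M$ and $\mathbb{I}$-resolutions are acyclic for $\qhom_{\ma}(M,-)$, so the adjunction isomorphism at the homotopy level descends directly to
\[
\Hom_{\cat{D}_{Ndg}(\ma)}(X\Lqten_{\mb}M,Y)\iso\Hom_{\cat{D}_{Ndg}(\mb)}(X,\mathbb{R}\Hom^{\bullet q}_{\ma}(M,Y)).
\]

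For part~(2), the equivalence (a)$\Leftrightarrow$(b) is formal, since a left adjoint in an adjoint pair of triangle functors is a triangle equivalence iff its right adjoint is. For (a)$\Rightarrow$(c), Theorem~\ref{projinj} shows that $\{\Sigma^i\theta^{-n}_q B\hat{}\mid B\in\mb,\,0\le n\le N-1,\,i\in\z\}$ is a set of compact generators for $\cat{D}_{Ndg}(\mb)$: every $B\hat{}$ is compact projective, its $\theta_q$-shifts remain compact, their coproducts build the category, and the bound $n\le N-1$ suffices because $\theta^N_q\simeq\Sigma^2$ by Theorem~\ref{Sigmatheta}. Any triangle equivalence preserves compact generators, and the Hom isomorphism in (c) is automatic.

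The heart of the argument is (c)$\Rightarrow$(a). The plan is to apply the standard compact-generation recognition principle: a coproduct-preserving triangle functor between compactly generated triangulated categories is an equivalence as soon as it sends a chosen set of compact generators to compact generators and is fully faithful on those generators in every degree. Condition~(c) supplies the compact-generator hypothesis, but the asserted Hom isomorphisms cover only $i=0,1$. Here $\Sigma^2\simeq\theta^N_q$ is essential: writing $i=2m+\varepsilon$ with $\varepsilon\in\{0,1\}$, the shift $\Sigma^i$ becomes $\Sigma^{\varepsilon}\theta^{mN}_q$, and since $\theta_q$ commutes with $\Sigma$ and the $\theta^j_q$-shifts of $B_1\hat{}$ and $B_2\hat{}$ are already in the generating family, absorbing the $\theta^{mN}_q$ into the generator and applying the hypothesis for $i=0,1$ gives the full Hom isomorphism in all degrees.

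When $k$ is a field, a morphism in $\cat{D}_{Ndg}(k)$ is an isomorphism iff it induces isomorphisms on the cohomology of the underlying $N$-complex; using the Frobenius structure and $\Sigma^2\simeq\theta^N_q$ again, these cohomology groups are detected in degrees $0$ and $1$ after $\theta^j_q$-twists, so the single $\cat{D}_{Ndg}(k)$-isomorphism in~(d) packages exactly the family of Hom isomorphisms of~(c). The principal obstacle is the reduction in (c)$\Rightarrow$(a) of the Hom-isomorphism condition from $i\in\{0,1\}$ to all $i\in\z$; once the two shift functors $\Sigma$ and $\theta_q$ are combined through Theorem~\ref{Sigmatheta}, the rest is a routine application of the compact-generation machinery.
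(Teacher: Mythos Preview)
Your proposal is correct and follows essentially the same route as the paper: derive the adjunction from Theorem~\ref{adjNDG} using $\mathbf{p}$- and $\mathbf{i}$-resolutions, then apply the compact-generation criterion (Theorem~\ref{trieqv} in the appendix) together with $\Sigma^2\simeq\theta_q^N$ to reduce the Hom conditions to $i\in\{0,1\}$, and finally translate between (c) and (d) via Proposition~\ref{HhomA} and Lemma~\ref{hghexagon}. Two small corrections: the set of compact generators for $\cat{D}_{Ndg}(\mb)$ is supplied by Theorem~\ref{gencogen} rather than Theorem~\ref{projinj}, and the field hypothesis is needed so that the bifunctor $\mathbb{R}\qhom_{\ma}(-,-)$ is well-defined (see Definition~\ref{NDGbifun} and the remark after it), not merely to detect isomorphisms by cohomology.
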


Finally, we show that every derived category of $NDG$ modules is triangle equivalent to derived categories of some ordinary $DG$ category.

\begin{theorem}[Theorem \ref{NdgDG}]
For any $N_qDG$ category $\ma$, there exists a $DG$ category $\mb$ such that
$\cat{D} _{Ndg}(\ma)$ is triangle equivalent to the derived category $\cat{D} _{dg}(\mb)$.

\end{theorem}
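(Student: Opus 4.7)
The plan is to produce an ordinary DG model $\mb$ for the derived endomorphism category of a set of compact generators of $\cat{D}_{Ndg}(\ma)$ and then invoke Theorem~\ref{Morita} to finish.

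First, by Theorem~\ref{projinj} the family
\[
\mathcal{G}=\{\,\theta_q^{-n}A\,\hat{}\,:\,A\in\ma,\ 0\le n\le N-1\,\}
\]
is a set of compact generators of $\cat{D}_{Ndg}(\ma)$: compactness follows from the representability of $\theta_q^{-n}A\,\hat{}$, and generation is exactly the content of $\cat{D}_{Ndg}(\ma)\simeq\cat{K}_{Ndg}^{\mathbb{P}}(\ma)$.

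Next I would build the DG category $\mb$ as follows. Its objects are pairs $(A,n)$ with $A\in\ma$ and $0\le n\le N-1$, in bijection with $\mathcal{G}$. The morphism complex $\mb((A_1,n_1),(A_2,n_2))$ is defined as an ordinary (i.e.\ $d^2=0$) complex extracted from the NDG Hom-complex $\qhom_{\ma}(\theta_q^{-n_1}A_1\,\hat{}\,,\,\theta_q^{-n_2}A_2\,\hat{}\,)$: one decomposes this $N$-complex by internal degree modulo $N$, keeps the stratum on which the $q$-twisted Leibniz sign collapses to $(-1)^{\bdot}$, and equips it with the 2-differential induced by $d$. The $q$-number identities of Section~\ref{qnumber} are invoked to check that this differential squares to zero and that it satisfies the ordinary graded Leibniz rule with respect to composition inherited from $\ma$. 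Packaged with $\mb$ comes a canonical $NDG$ $\mb$-$\ma$-bimodule $M$ whose restriction to each object $(A,n)\in\mb$ is $\theta_q^{-n}A\,\hat{}$, so that $(A,n)\,\hat{}\qten_{\mb}M=\theta_q^{-n}A\,\hat{}$.

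To conclude, I would apply Theorem~\ref{Morita}(2)(c) to the bimodule $M$. The compact-generation hypothesis, asking that $\{\Sigma^i\theta_q^{-n}B\,\hat{}\qten_{\mb}M\}$ compactly generates $\cat{D}_{Ndg}(\ma)$, reduces to the previous step since this family is precisely $\mathcal{G}$ up to suspensions and $\theta_q$-shifts. The required Hom-isomorphism is by construction: $\mb((A_1,n_1),(A_2,n_2))$ was designed so that its cohomology computes $\Hom_{\cat{D}_{Ndg}(\ma)}(\theta_q^{-n_1}A_1\,\hat{}\,,\,\Sigma^{\bdot}\theta_q^{-n_2}A_2\,\hat{}\,)$. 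Theorem~\ref{Morita}(2) then yields the triangle equivalence $-\Lqten_{\mb}M\colon\cat{D}_{dg}(\mb)\iso\cat{D}_{Ndg}(\ma)$.

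The hard part is the middle step: unfolding the $N$-nilpotent Hom-structure of $\cat{C}_{Ndg}(\ma)$ into honest 2-complexes while preserving associative composition, and then verifying that the induced 2-differential is square-zero and satisfies the ordinary Leibniz rule. Both verifications are where the $q$-arithmetic of Section~\ref{qnumber} really bites, since one must specialize the $N$-fold identity $d^N=0$ to a genuine $\partial^2=0$ relation on the chosen stratum. Once this explicit construction is carried out, the remainder of the argument is the formal application of Theorem~\ref{Morita} sketched above.
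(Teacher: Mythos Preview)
Your plan has two structural gaps. First, Theorem~\ref{Morita} is formulated for two $N_qDG$ categories with the \emph{same} $N$ and $q$: the bimodule $M$ there is an $NDG$ $\mb$-$\ma$-bimodule in the sense of Definition~\ref{NDG-bimodule}, and the conclusion is an equivalence $\cat{D}_{Ndg}(\mb)\simeq\cat{D}_{Ndg}(\ma)$, not $\cat{D}_{dg}(\mb)\simeq\cat{D}_{Ndg}(\ma)$. So even if your $\mb$ were a genuine DG category equipped with a suitable $M$, Theorem~\ref{Morita} is not the tool that delivers the stated conclusion. Second, and more seriously, the ``middle step'' is not a construction. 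Keeping a single residue class of degrees modulo $N$ on which $q^{r}$ equals $(-1)^{r}$ is not closed under composition (degrees add), and the differential $d$ on $\qhom_{\ma}(X,Y)$ simply does not satisfy $d^2=0$ on any compositionally closed graded piece when $N>2$; nothing in Section~\ref{qnumber} collapses $d^N=0$ together with the $q$-Leibniz rule to an honest square-zero differential compatible with composition. You concede this is the hard part, but as described it does not work.

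The paper's argument bypasses any explicit unfolding of the $N$-differential. It notes that $\cat{K}_{Ndg}(\ma)$ is algebraic (Theorem~\ref{Kalgtricat}), that $\cat{D}_{Ndg}(\ma)\simeq\cat{K}_{Ndg}^{\mathbb{P}}(\ma)$ is therefore algebraic as well (Theorem~\ref{projinj}), and that it is compactly generated (Theorem~\ref{gencogen}). Then it invokes Krause's general result \cite[Theorem~7.5]{Kr}: every compactly generated algebraic triangulated category is triangle equivalent to $\cat{D}_{dg}(\mb)$ for some small DG category $\mb$. The explicit $\mb$ (Remark~\ref{DG}) is Keller--Krause's DG category of acyclic complexes over the Frobenius category $\cat{C}_{Ndg}^{\mathbb{P}}(\ma)$, built from the exact structure rather than from any putative $2$-differential hidden inside the $N$-differential Hom-complexes.
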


Throughout this note we fix a positive integer $N \ge 2$ and a commutative ring $k$ with unity $1$ 
and we assume that $k$ has a primitive $N$-th root  $q$ of $1$.
We will write $\Hom$ and $\otimes$ for $\Hom_k$ and $\ten_k$.


\section{$q$-numbers}\label{qnumber}
In this section, we collect some formulas on $q$-numbers. We will omit proofs of some elementary facts.
For a commutative ring $k$ with unity $1$ and a positive integer $N \ge 2$,
we say that an element $q$ of $k$ is a {\it primitive $N$-th root} of $1$ if
$q^N-1=0$ and $q^l-1$ is a non-zero-divisor for any $1 \le l \le N-1$.
Then $[m]=1+ q + \cdots + q^{m-1}$ is a non-zero-divisor for $1 \leq m <N$, and $[N]=0$.
We define $q$-numbers in the total quotient ring $K$ of $k$.
For any positive integer $m$,
$[m]!=[m][m-1]\cdots[1]$ and $[0]!=1$.
For $0 \le l \le m \le N$ with  $(l, m)\ne(N, N)$ nor $(0, N)$,
\[
\begin{bmatrix} m \\ l \\ \end{bmatrix}
=\frac{[m]!}{[l]![m-l]!}
\]
and
\[
\begin{bmatrix} N \\ N \\ \end{bmatrix}
=
\begin{bmatrix} N \\ 0 \\ \end{bmatrix}
=1.
\]
Then we have 
$
\begin{bmatrix} m \\ l \\ \end{bmatrix}=
\begin{bmatrix} m \\ m-l \\ \end{bmatrix}
$.

By the following lemma, we can consider 
$\begin{bmatrix}
m \\
l \\
\end{bmatrix}$ lies in $k$
through the canonical injection $k \to K$.

\begin{lemma}\label{qn}
For $1 \le l < m \le N$,
\[
\begin{bmatrix} m-1 \\ l-1 \\ \end{bmatrix} 
+\begin{bmatrix} m-1 \\ l \\ \end{bmatrix} q^{l}
=\begin{bmatrix} m \\ l \\ \end{bmatrix}, 
\]
\[
\begin{bmatrix} m-1 \\ l-1 \\ \end{bmatrix} q^{m-l}
+\begin{bmatrix} m-1 \\ l \\ \end{bmatrix} 
=\begin{bmatrix} m \\ l \\ \end{bmatrix}.
\]
\end{lemma}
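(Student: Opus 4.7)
The plan is to reduce both identities to a single elementary decomposition of $[m]$ as a $q$-sum and then do a direct fraction calculation in the total quotient ring $K$.

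First I would record the observation that, by splitting the geometric sum $[m]=1+q+\cdots+q^{m-1}$ at position $l$ (resp.\ at position $m-l$), one obtains
\[
[m]=[l]+q^{l}[m-l]=[m-l]+q^{m-l}[l].
\]
This is the only algebraic content; everything else is bookkeeping. Note also that for $1\le l<m\le N$ we have $l\le N-1$ and $m-l\le N-1$, so the factorials $[l]!$, $[m-l]!$, $[l-1]!$, $[m-l-1]!$ never contain the factor $[N]=0$. Consequently they are products of non-zero-divisors and it is legal to divide by them in $K$.

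For the first identity I would rewrite the two terms of the left-hand side over the common denominator $[l]!\,[m-l]!$:
\[
\begin{bmatrix}m-1\\ l-1\end{bmatrix}+\begin{bmatrix}m-1\\ l\end{bmatrix}q^{l}
=\frac{[m-1]!\,[l]}{[l]!\,[m-l]!}+\frac{[m-1]!\,[m-l]\,q^{l}}{[l]!\,[m-l]!}
=\frac{[m-1]!\bigl([l]+q^{l}[m-l]\bigr)}{[l]!\,[m-l]!},
\]
and then apply the first decomposition of $[m]$ above. The numerator becomes $[m-1]!\,[m]=[m]!$, giving $\begin{bmatrix}m\\ l\end{bmatrix}$ as required.

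For the second identity I see two equally clean options; I would pick whichever reads better in context. Option (a): run the same computation but use the second decomposition $[m]=[m-l]+q^{m-l}[l]$, combining over the same common denominator. Option (b): apply the first identity with $l$ replaced by $m-l$ and invoke the symmetry $\begin{bmatrix}m\\ l\end{bmatrix}=\begin{bmatrix}m\\ m-l\end{bmatrix}$ (and the analogous symmetries for $m-1$) to rewrite each $q$-binomial in the resulting formula. Either way the verification is immediate once the decomposition of $[m]$ is in place. There is no genuine obstacle here; the only point that requires a moment of care is confirming that the denominators appearing in the calculation are non-zero-divisors in $k$ so that the manipulations in $K$ really do yield equalities of the intended elements, which is precisely what the range $1\le l<m\le N$ guarantees.
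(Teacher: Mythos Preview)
Your argument is correct and is exactly the standard verification of the $q$-Pascal identities. Note that the paper does not actually supply a proof of this lemma: it is one of the ``elementary facts'' whose proofs are explicitly omitted at the start of Section~\ref{qnumber}, so there is nothing to compare against beyond observing that your computation is the expected one.
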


We show the following lemma for the lemma after the next. 

\begin{lemma}\label{qn0}
For $1 \le t \le N$.
\[
\sum^{t}_{j=0}(-1)^{j}q^{\frac{j(j-1)}{2}}\begin{bmatrix} t \\ j \\ \end{bmatrix}=0.
\]
\end{lemma}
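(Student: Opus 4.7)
The plan is to prove the identity by applying the first $q$-Pascal relation of Lemma \ref{qn} termwise to each $\begin{bmatrix}t\\j\end{bmatrix}$ and observing that the resulting expression telescopes in a single step. In particular, no induction on $t$ is needed.

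Set $S_t:=\sum_{j=0}^{t}(-1)^{j}q^{j(j-1)/2}\begin{bmatrix}t\\j\end{bmatrix}$ and extend the $q$-binomial notation by the convention $\begin{bmatrix}t-1\\-1\end{bmatrix}=\begin{bmatrix}t-1\\t\end{bmatrix}=0$. First I would check that with this convention the recursion
\[
\begin{bmatrix}t\\j\end{bmatrix}=\begin{bmatrix}t-1\\j-1\end{bmatrix}+q^{j}\begin{bmatrix}t-1\\j\end{bmatrix}
\]
holds for all $0\le j\le t$ and $1\le t\le N$; the interior range $1\le j\le t-1$ is Lemma \ref{qn}, while $j=0$ and $j=t$ reduce to $1=0+1$ and $1=1+q^t\cdot 0$ respectively. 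Substituting this and using $j(j-1)/2+j=j(j+1)/2$ splits $S_t$ as
\[
\sum_{j=0}^{t}(-1)^{j}q^{j(j-1)/2}\begin{bmatrix}t-1\\j-1\end{bmatrix}\;+\;\sum_{j=0}^{t}(-1)^{j}q^{j(j+1)/2}\begin{bmatrix}t-1\\j\end{bmatrix}.
\]
The index shift $j\mapsto j+1$ in the first sum (whose $j=0$ term is zero by convention) turns it into $-\sum_{j=0}^{t-1}(-1)^{j}q^{j(j+1)/2}\begin{bmatrix}t-1\\j\end{bmatrix}$; the second sum (whose $j=t$ term is zero) equals $\sum_{j=0}^{t-1}(-1)^{j}q^{j(j+1)/2}\begin{bmatrix}t-1\\j\end{bmatrix}$. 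The two cancel, giving $S_t=0$.

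The only mild obstacle is the boundary bookkeeping at $t=N$, $j=N$, which lies outside the range $1\le l<m\le N$ of Lemma \ref{qn}. However, the relation $\begin{bmatrix}N\\N\end{bmatrix}=\begin{bmatrix}N-1\\N-1\end{bmatrix}+q^{N}\begin{bmatrix}N-1\\N\end{bmatrix}$ holds automatically once we adopt $\begin{bmatrix}N-1\\N\end{bmatrix}=0$, and in particular requires no appeal to $q^{N}=1$. With this settled, the telescoping argument covers $1\le t\le N$ uniformly, and the writeup will be only a few lines.
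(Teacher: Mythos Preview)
Your argument is correct. Both you and the paper expand $S_t$ via a $q$-Pascal identity from Lemma~\ref{qn} and then simplify; the difference is that you use the first relation $\begin{bmatrix}t\\j\end{bmatrix}=\begin{bmatrix}t-1\\j-1\end{bmatrix}+q^{j}\begin{bmatrix}t-1\\j\end{bmatrix}$, whereas the paper uses the second one, $\begin{bmatrix}t\\j\end{bmatrix}=\begin{bmatrix}t-1\\j\end{bmatrix}+q^{t-j}\begin{bmatrix}t-1\\j-1\end{bmatrix}$. With your choice the two resulting sums cancel directly after the index shift, so the proof is a single telescoping step with no base case required. The paper's choice instead yields the recursion $a_t=(1-q^{t-1})a_{t-1}$, which then needs the verification $a_1=0$ to close the induction. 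The two proofs are minor variants of the same idea; yours is a touch slicker, while the paper's version makes the factor $(1-q^{t-1})$ visible, which is occasionally informative.
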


\begin{proof}
Set $a_t=\sum^{t}_{j=0}(-1)^{j}q^{\frac{j(j-1)}{2}}\begin{bmatrix} t \\ j \\ \end{bmatrix}$.
We have that $a_1=1-1=0$. 
For any $2 \le t \le N$, by Lemma \ref{qn},
\begin{align*}
a_t&=\sum^{t-1}_{j=0}(-1)^{j}q^{\frac{j(j-1)}{2}}\begin{bmatrix} t-1 \\ j \\ \end{bmatrix}
+\sum^{t}_{j=1}(-1)^{j}q^{\frac{j(j-1)}{2}}\begin{bmatrix} t-1 \\ j-1 \\ \end{bmatrix}q^{t-j}\\
&=a_{t-1}-q^{t-1}a_{t-1}.
\end{align*}
Therefore $a_t=0$ for any $1\leq t \le N$.
\end{proof}

\begin{lemma}\label{qnnp}
Let $n$ be an integer with $1 \le n \le N$ and $X$ a $k$-module. 
For any morphisms $\phi$, $\psi \in \Hom(X, X)$ with $\psi\phi = q\phi\psi$, the following hold.
\begin{enumerate}
\item $(\phi +\psi)^n= \sum_{l=0}^n \begin{bmatrix} n \\ l \\ \end{bmatrix}\phi^{n-l}\psi^l$,
\item $\phi^n= \sum_{l=0}^n (-1)^lq^{\frac{l(l-1)}{2}}\begin{bmatrix} n \\ l \\ \end{bmatrix}(\phi+\psi)^{n-l}\psi^l.$
\end{enumerate}
\end{lemma}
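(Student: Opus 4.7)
The plan is to prove (1) by induction on $n$ and then derive (2) by substituting (1) into the right-hand side of (2) and collapsing the resulting double sum via Lemma \ref{qn0}.

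For part (1), I would first observe that the hypothesis $\psi\phi = q\phi\psi$ iterates to $\psi^{l}\phi = q^{l}\phi\psi^{l}$ for all $l \ge 0$. The base case $n=1$ is immediate from $\begin{bmatrix} 1 \\ 0 \end{bmatrix} = \begin{bmatrix} 1 \\ 1 \end{bmatrix} = 1$. For the inductive step, assuming the formula for $n-1$, I would write
\[
(\phi+\psi)^{n} = (\phi+\psi)^{n-1}(\phi+\psi) = \sum_{l=0}^{n-1}\begin{bmatrix} n-1 \\ l \end{bmatrix}\phi^{n-1-l}\psi^{l}(\phi+\psi),
\]
distribute the inner factor, and use $\psi^{l}\phi = q^{l}\phi\psi^{l}$ to bring everything into the form $\phi^{n-l}\psi^{l}$. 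After reindexing the $\psi^{l+1}$ sum, the coefficient of $\phi^{n-l}\psi^{l}$ for $1 \le l \le n-1$ becomes $\begin{bmatrix} n-1 \\ l-1 \end{bmatrix} + \begin{bmatrix} n-1 \\ l \end{bmatrix}q^{l}$, which is $\begin{bmatrix} n \\ l \end{bmatrix}$ by the first identity of Lemma \ref{qn}; the boundary terms $\phi^{n}$ and $\psi^{n}$ appear with coefficient $1$, matching the conventions $\begin{bmatrix} n \\ 0 \end{bmatrix} = \begin{bmatrix} n \\ n \end{bmatrix} = 1$.

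For part (2), I would substitute the formula of (1) into each factor $(\phi+\psi)^{n-l}$ appearing on the right-hand side, obtaining
\[
\sum_{l=0}^{n}(-1)^{l}q^{l(l-1)/2}\begin{bmatrix} n \\ l \end{bmatrix}\sum_{j=0}^{n-l}\begin{bmatrix} n-l \\ j \end{bmatrix}\phi^{n-l-j}\psi^{j+l}.
\]
Setting $m = j+l$ and swapping the order of summation, this becomes
\[
\sum_{m=0}^{n}\phi^{n-m}\psi^{m}\sum_{l=0}^{m}(-1)^{l}q^{l(l-1)/2}\begin{bmatrix} n \\ l \end{bmatrix}\begin{bmatrix} n-l \\ m-l \end{bmatrix}.
\]
The elementary identity $\begin{bmatrix} n \\ l \end{bmatrix}\begin{bmatrix} n-l \\ m-l \end{bmatrix} = \begin{bmatrix} n \\ m \end{bmatrix}\begin{bmatrix} m \\ l \end{bmatrix}$, verified by unfolding both sides to $[n]!/([l]![m-l]![n-m]!)$, lets me pull $\begin{bmatrix} n \\ m \end{bmatrix}$ out of the inner sum, which then equals zero for $1 \le m \le n$ by Lemma \ref{qn0} and equals $1$ for $m=0$. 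Only the $m=0$ term survives, yielding $\phi^{n}$.

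The main obstacle is bookkeeping rather than conceptual: I need to verify that all indices $m,l$ encountered lie in the range where the $q$-binomials are defined as elements of $k$ (i.e., avoiding the excluded pair $(0,N)$), and in particular that the factorization $\begin{bmatrix} n \\ l \end{bmatrix}\begin{bmatrix} n-l \\ m-l \end{bmatrix} = \begin{bmatrix} n \\ m \end{bmatrix}\begin{bmatrix} m \\ l \end{bmatrix}$ makes sense over $k$ (not just over the total quotient ring $K$); this holds because $[l]![m-l]![n-m]!$ is a product of $q$-factorials with arguments strictly less than $N$ whenever $m<n$, and the boundary case $m=n=N$ is handled separately via the convention.
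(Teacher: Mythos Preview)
Your proof is correct and essentially identical to the paper's: both prove (1) by induction using Lemma \ref{qn} (you multiply $(\phi+\psi)^{n-1}$ on the right and invoke the first identity, the paper multiplies on the left and invokes the second, a cosmetic difference), and both prove (2) by substituting (1), regrouping via the identity $\begin{bmatrix} n \\ l \end{bmatrix}\begin{bmatrix} n-l \\ m-l \end{bmatrix} = \begin{bmatrix} n \\ m \end{bmatrix}\begin{bmatrix} m \\ l \end{bmatrix}$, and collapsing the inner sum with Lemma \ref{qn0}. Your extra remark about the $q$-binomials lying in $k$ is a careful point the paper leaves implicit.
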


\begin{proof}
Set $a_n=\sum_{l=0}^n \begin{bmatrix} n \\ l \\ \end{bmatrix}\phi^{n-l}\psi^l$.
By Lemma \ref{qn},
\begin{align*}
a_n=&\sum_{l=0}^{n-1} \begin{bmatrix} n-1 \\ l \\ \end{bmatrix}\phi^{n-l}\psi^l+
\sum_{l=1}^{n} \begin{bmatrix} n-1 \\ l-1 \\ \end{bmatrix}q^{n-l}\phi^{n-l}\psi^l\\
=&\phi\sum_{l=0}^{n-1} \begin{bmatrix} n-1 \\ l \\ \end{bmatrix}\phi^{n-1-l}\psi^l+
\psi\sum_{l=1}^{n} \begin{bmatrix} n-1 \\ l-1 \\ \end{bmatrix}\phi^{n-l}\psi^{l-1}\\
=&(\phi+\psi)a_{n-1}\\
=&(\phi+\psi)^n .
\end{align*}
Set $b_n=\sum_{l=0}^n (-1)^lq^{\frac{l(l-1)}{2}}\begin{bmatrix} n \\ l \\ \end{bmatrix}(\phi+\psi)^{n-l}\psi^l$.
By Lemma \ref{qnnp} (1),
\begin{align*}
b_n=&\sum_{l=0}^n (-1)^lq^{\frac{l(l-1)}{2}}\begin{bmatrix} n \\ l \\ \end{bmatrix}
\sum_{j=0}^{n-l} \begin{bmatrix} n-l \\ j \\ \end{bmatrix}\phi^{n-l-j}\psi^j\psi^l\\
=&\sum_{l=0}^n\sum_{j=0}^{n-l}(-1)^lq^{\frac{l(l-1)}{2}}\begin{bmatrix} n \\ l \\ \end{bmatrix}
\begin{bmatrix} n-l \\ j \\ \end{bmatrix}\phi^{n-(l+j)}\psi^{l+j}\\
=&\sum_{l=0}^n\sum_{j=0}^{n-l}(-1)^lq^{\frac{l(l-1)}{2}}\begin{bmatrix} l+j \\ l \\ \end{bmatrix}
\begin{bmatrix} n \\ l+j \\ \end{bmatrix}\phi^{n-(l+j)}\psi^{l+j}\\
=&\sum_{t=0}^n(\sum_{l=0}^{t}(-1)^lq^{\frac{l(l-1)}{2}}\begin{bmatrix} t \\ l \\ \end{bmatrix})
\begin{bmatrix} n \\ t \\ \end{bmatrix}\phi^{n-t}\psi^{t}.
\end{align*}
By Lemma \ref{qn0}, $b_n=\phi^n$.
\end{proof}

We prepare the following lemma also, which will be used later.

\begin{lemma}\label{qninv}
For any $1\le t \le N-1$ and $0 \le l \le N-t$,
\[
(-1)^lq^{lt+\frac{l(l-1)}{2}}\begin{bmatrix} N-t \\ l\\ \end{bmatrix} 
=\begin{bmatrix} l+t-1 \\ l \\ \end{bmatrix} .
\]
In particular,
$
(-1)^{l}q^{\frac{l(l+1)}{2}} \begin{bmatrix} N-1 \\ l \\ \end{bmatrix}=1
$
for $0 \le l \le N-1$.
\end{lemma}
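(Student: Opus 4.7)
The plan is to reduce everything to the basic identity $q^N = 1$, which forces a symmetry on the $q$-integers $[m]$ that translates directly into the claimed identity on $q$-binomials.

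First I would observe that since $q^N = 1$, for any $1 \le j \le N-1$ one has
\[
[N-j] \;=\; 1 + q + \cdots + q^{N-j-1} \;=\; -q^{-j}(1 + q + \cdots + q^{j-1}) \;=\; -q^{-j}[j],
\]
an identity which lives in the total quotient ring $K$ but whose product on the left-hand side below lies in $k$. (One derives it by multiplying $[N-j] + q^{N-j}[j] = [N] = 0$ by $q^{-j}$, using $q^N = 1$.)

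Next I would apply this to each factor of the ``falling product'' appearing in $\begin{bmatrix} N-t \\ l \end{bmatrix} = \frac{[N-t][N-t-1]\cdots[N-t-l+1]}{[l]!}$. Since $0 \le l \le N-t$ and $t \ge 1$, every index $N-t-j$ (for $0 \le j \le l-1$) lies in the range $\{1,\dots,N-1\}$, so the preceding identity applies and yields
\[
\prod_{j=0}^{l-1}[N-t-j] \;=\; (-1)^l \, q^{-\sum_{j=0}^{l-1}(t+j)} \prod_{j=0}^{l-1} [t+j] \;=\; (-1)^l \, q^{-(lt + \frac{l(l-1)}{2})} \,\frac{[t+l-1]!}{[t-1]!}.
\]
Dividing by $[l]!$ gives $\begin{bmatrix} N-t \\ l \end{bmatrix} = (-1)^l q^{-(lt + \frac{l(l-1)}{2})} \begin{bmatrix} t+l-1 \\ l \end{bmatrix}$, and multiplying through by $(-1)^l q^{lt + \frac{l(l-1)}{2}}$ produces the stated formula. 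The ``in particular'' case follows by setting $t = 1$ and noting $l + \frac{l(l-1)}{2} = \frac{l(l+1)}{2}$ and $\begin{bmatrix} l \\ l \end{bmatrix} = 1$.

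The main bookkeeping hazard is ensuring the argument stays inside the range where $[N-j]$ admits the formula $-q^{-j}[j]$, and where the $q$-binomials on both sides are defined (i.e.\ avoiding the excluded pairs $(l,m)=(N,N),(0,N)$); the bounds $1 \le t \le N-1$ and $0 \le l \le N-t$ are exactly what guarantees this, with the edge case $l = N-t$ handled automatically since $t+l-1 = N-1$ is still in range. An alternative route would be induction on $l$ using the two Pascal-type recurrences of Lemma \ref{qn}, but the direct factor-by-factor computation above is shorter and makes the role of the primitivity of $q$ transparent.
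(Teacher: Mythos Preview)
Your proof is correct and follows essentially the same approach as the paper: both use the relation $[j]+q^{j}[N-j]=[N]=0$ (equivalently $[N-j]=-q^{-j}[j]$) factor by factor on the rising product $[t][t+1]\cdots[t+l-1]$ to relate $\begin{bmatrix} N-t \\ l \end{bmatrix}$ and $\begin{bmatrix} t+l-1 \\ l \end{bmatrix}$. Your version is somewhat more detailed about the range checks and edge cases, but the argument is the same.
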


\begin{proof}
Since $[j]+q^{j}[N-j]=[N]=0$ for any $1 \le j \le N-1$,
\[
[t][t+1]\cdots[t+l-1]=(-1)^lq^{lt+\frac{j(j+1)}{2}}[N-t][N-(t+1)]\cdots[N-(t+l-1)]
\]
for $l \ge 1$. Hence
\[
(-1)^lq^{lt+\frac{l(l-1)}{2}}\begin{bmatrix} N-t \\ l\\ \end{bmatrix} 
=\begin{bmatrix} l+t-1 \\ l \\ \end{bmatrix}.
\]
\end{proof}


\section{NDG categories}\label{NDG}

In this section, we introduce $N$-differential graded modules over an $N$-differential graded category and
give the definition of some categories.

We recall the definition of the category of $\z$-graded $k$-modules.
The category  $\cat{Gr}(k)$ of $\mathbb{Z}$-graded $k$-modules is defined as follows:
\begin{enumerate}
\item An object is a $\mathbb{Z}$-graded $k$-module $U=\coprod_{i \in \mathbb{Z}}U^i$.
\item The morphism set between $U$ and $V$ is given by 
\[
\Hom_{\cat{Gr}(k)}(U, V):=\coprod_{i \in \mathbb{Z}}\Hom^i(U, V)
\]
where
\[
\Hom^i(U, V):=\prod_{l \in \z}\Hom(U^l,V^{l+i}).
\]
The composition
\[
\Hom_{\cat{Gr}(k)}(V, W) \otimes \Hom_{\cat{Gr}(k)}(U, V) \to  \Hom_{\cat{Gr}(k)}(U, W), \; f \otimes g \mapsto fg 
\]
is $k$-bilinear and homogeneous of degree $0$.
\end{enumerate}

The category $\cat{Gr}^0(k)$ is the subcategory of $\cat{Gr}(k)$ whose objects are same as $\cat{Gr}(k)$ and the morphism set
between $X$ and $Y$ is given by
\[
\Hom_{\cat{Gr}^0(k)}=\Hom^0(X, Y).
\]


\subsection{$N$-differential graded $k$-modules}\label{NDGmod}

In this subsection we study $N$-differential graded ($NDG$) $k$-modules.

A sequence $X=(X, d_X)$ of $k$-modules is called an $N$-differential graded (NDG) $k$-module 
if a $\z$-graded $k$-module $X$ endowed with $d_X \in \Hom_{\cat{Gr}(k)}^1(X, X)$ satisfying $d_X^{\{N\}}=0$.
Here $d^{\{i\}}$ means the $i$-th power of $d$.
The category $\cat{C}_{Ndg}(k)$ of $N$-differential $k$-modules is defined as follows:
\begin{enumerate}
\item Objects are $N$-differential graded $k$-modules.
\item The morphism set between $X=(X, d_X)$ and $Y=(Y,d_Y)$ is given by
\[
\Hom_{\cat{C}_{Ndg}(k)}(X, Y)=\{ f \in \Hom^0(X, Y) \mid f\circ d_X=d_Y\circ f\}
\]
and the composition is given by the composition of maps.
\end{enumerate}

\begin{remark}\label{AbNdgk}
It is easy to see that $\cat{C}_{Ndg}(k)$ and $\cat{Gr}^0(k)$ are abelian categories with ordinary exact sequences.
\end{remark}

For NDG  $k$-modules $U$ and $V$,
a sequence $\qhom (U, V)$ is defined as follows.
\[
\qhom (U, V):=\Hom_{\cat{Gr}(k)}(U, V)
\]
and
\[
d_{\qhom (U, V)}(f)=d_V\circ f-q^rf\circ d_U
\]
for any $f \in \Hom^r(U, V)$. 
A sequence $U\qten V$ is defined as follows.
\[
U\qten V=\coprod_{i \in \z}U\otimes^iV
\]
where
\[
U\otimes^iV=\coprod_{l+m=i}U^l\otimes V^m
\]
for any $i \in \z$, and
\[
d_{U\qten V}(u\otimes v)=d_U(u)\otimes v+q^su\otimes d_V(v)
\]
for any $u \in U^s$ and $v\in V$.

Then we have the following lemma.

\begin{lemma}\label{homtennp}
For any $n \in \n$ with $1\le n \le N$ and sequences of  $k$-modules $U$ and $V$, the following hold.
\begin{enumerate}
\item For any $f \in \Hom^r(U, V)$,
\[
d_{\qhom (U, V)}^{\{n\}}(f)=\sum ^{n}_{l=0}(-1)^lq^{lr+\frac{l(l-1)}{2}}
\begin{bmatrix} n \\ l \\ \end{bmatrix}d^{\{n-l\}}_V\circ f\circ d^{\{l\}}_U.
\]
\item For any $u \in U^s$ and $v \in V$,
\[
d_{U\qten V}^{\{n\}}(u\otimes v)=\sum ^n_{l=0}q^{ls}
\begin{bmatrix} n \\ l \\ \end{bmatrix}d^{\{n-l\}}_U(u)\otimes d^{\{l\}}_V(v).
\]
\end{enumerate}
\end{lemma}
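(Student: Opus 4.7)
The plan is to reduce both parts to Lemma \ref{qnnp}(1) by identifying the differential as a sum $\phi+\psi$ of two $q$-commuting operators, and then reading off the formula.

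For (1), I would define, on the graded $k$-module $\Hom_{\cat{Gr}(k)}(U,V)$, the operators $\phi(f) = d_V\circ f$ and $\psi(f) = -q^r f\circ d_U$ for $f\in\Hom^r(U,V)$, so that $d_{\qhom(U,V)} = \phi+\psi$. A direct check shows $\psi\phi = q\,\phi\psi$: applying $\psi$ after $\phi$ sees degree $r+1$ and produces $-q^{r+1}d_V\circ f\circ d_U$, whereas $\phi\psi(f) = -q^r d_V\circ f\circ d_U$. Then Lemma \ref{qnnp}(1) gives $d_{\qhom(U,V)}^{\{n\}}(f) = (\phi+\psi)^n(f) = \sum_l \begin{bmatrix} n\\ l\end{bmatrix}\phi^{n-l}\psi^l(f)$, so it remains to show by induction on $l$ that
\[
\psi^l(f) = (-1)^l q^{lr+\frac{l(l-1)}{2}}\, f\circ d_U^{\{l\}},
\]
the exponent of $q$ arising from the arithmetic progression $r + (r+1) + \cdots + (r+l-1)$. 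Composing with $\phi^{n-l} = d_V^{\{n-l\}}\circ (-)$ yields the stated formula.

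For (2), I would proceed in parallel on $U\qten V$, setting $\phi(u\otimes v) = d_U(u)\otimes v$ and $\psi(u\otimes v) = q^s u\otimes d_V(v)$ for $u\in U^s$, so that $d_{U\qten V} = \phi+\psi$. Again $\psi\phi(u\otimes v) = q^{s+1}d_U(u)\otimes d_V(v)$ while $\phi\psi(u\otimes v) = q^s d_U(u)\otimes d_V(v)$, giving $\psi\phi = q\,\phi\psi$. By induction, $\psi^l(u\otimes v) = q^{ls}\, u\otimes d_V^{\{l\}}(v)$ (since each application of $\psi$ keeps $u$ in degree $s$ and contributes one factor of $q^s$), and then $\phi^{n-l}\psi^l(u\otimes v) = q^{ls}\, d_U^{\{n-l\}}(u)\otimes d_V^{\{l\}}(v)$. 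Invoking Lemma \ref{qnnp}(1) once more gives the formula in (2).

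The only real content beyond bookkeeping is the verification of the $q$-commutation relation $\psi\phi = q\phi\psi$ in each case, which is where the conventions $-q^r$ in the definition of $d_{\qhom}$ and $q^s$ in the definition of $d_{U\qten V}$ are exactly tuned to fit Lemma \ref{qnnp}. Once this is checked, the rest is a routine induction to track the powers of $q$ accumulated by iterating $\psi$, and there is no deeper obstacle.
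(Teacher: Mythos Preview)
Your proposal is correct and follows essentially the same approach as the paper: define the same operators $\phi$ and $\psi$, verify $\psi\phi=q\phi\psi$, and invoke Lemma~\ref{qnnp}(1). You supply a bit more detail (the explicit check of the commutation relation and the inductive computation of $\psi^l$), but the argument is otherwise identical.
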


\begin{proof}
For any $f \in \Hom^r(U, V)$,
set $\phi(f)=d_V\circ f$ and $\psi(f)=-q^rf\circ d_U$, then
$d_{\qhom (U, V)}(f)=(\phi+\psi)(f)$ and
$\psi\phi(f)=q\phi\psi(f)$. By Lemma \ref{qnnp},
\begin{align*}
d_{\qhom (U, V)}^{\{n\}}(f)=&(\phi+\psi)^n(f)\\
=& \sum_{l=0}^n \begin{bmatrix} n \\ l \\ \end{bmatrix}\phi^{n-l}\psi^l(f)\\
=&\sum ^{n}_{l=0}(-1)^lq^{lr+\frac{l(l-1)}{2}}
\begin{bmatrix} n \\ l \\ \end{bmatrix}d^{\{n-l\}}_V\circ f\circ d^{\{l\}}_U.
\end{align*}
For any $u \in U^s$, $v \in V$, set $\phi(u\otimes v)=d_U(u)\otimes v$ and $\psi(u\otimes v)=q^su\otimes d_V(v)$,
then $d_{U\qten V}(u\otimes v)=(\phi+\psi)(u\otimes v)$ and 
$\psi\phi(u\otimes v)=q\phi\psi(u\otimes v)$. By Lemma \ref{qnnp},
\begin{align*}
d_{U\qten V}^{\{n\}}(u\otimes v)=&(\phi+\psi)^n(u\otimes v)\\
=& \sum_{l=0}^n \begin{bmatrix} n \\ l \\ \end{bmatrix}\phi^{n-l}\psi^l(u\otimes v)\\
=&\sum ^n_{l=0}q^{ls}
\begin{bmatrix} n \\ l \\ \end{bmatrix}d^{\{n-l\}}_U(u)\otimes d^{\{l\}}_V(v).
\end{align*}
\end{proof}

\begin{corollary}[\cite{Ka}]\label{homtenNdg}
For NDG $k$-modules $U$ and $V$,
the sequences $\qhom (U, V)$ and $U\qten V$ are $N$-differential $k$-modules.
\end{corollary}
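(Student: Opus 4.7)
The plan is to apply Lemma \ref{homtennp} directly with $n=N$ to both differentials and observe that all terms vanish. Concretely, for any $f \in \Hom^r(U,V)$, part (1) of the lemma gives
\[
d_{\qhom(U,V)}^{\{N\}}(f)=\sum_{l=0}^{N}(-1)^{l}q^{lr+\frac{l(l-1)}{2}}\begin{bmatrix} N \\ l \end{bmatrix}d_V^{\{N-l\}}\circ f\circ d_U^{\{l\}},
\]
and analogously part (2) gives the corresponding expansion of $d_{U\qten V}^{\{N\}}(u\otimes v)$.

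The key observation is that only the two endpoint terms can possibly survive. Indeed, for $1\le l\le N-1$ the $q$-binomial coefficient $\begin{bmatrix} N \\ l \end{bmatrix}=\frac{[N]!}{[l]![N-l]!}$ has $[N]=0$ in the numerator while $[l]![N-l]!$ is a product of the non-zero-divisors $[1],\ldots,[N-1]$, so the quotient is $0$ in $K$ and hence in $k$. Thus only the terms with $l=0$ and $l=N$ contribute; but by definition $\begin{bmatrix} N \\ 0 \end{bmatrix}=\begin{bmatrix} N \\ N \end{bmatrix}=1$, and these terms involve the factors $d_V^{\{N\}}\circ f$ (resp.\ $f\circ d_U^{\{N\}}$) for $\qhom(U,V)$, and $d_U^{\{N\}}(u)\otimes v$ (resp.\ $u\otimes d_V^{\{N\}}(v)$) for $U\qten V$. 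Since $U$ and $V$ are $NDG$ $k$-modules, $d_U^{\{N\}}=0$ and $d_V^{\{N\}}=0$, so these endpoint terms vanish as well.

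Combining these, $d_{\qhom(U,V)}^{\{N\}}=0$ and $d_{U\qten V}^{\{N\}}=0$, which is exactly what is needed to conclude that $\qhom(U,V)$ and $U\qten V$ are $NDG$ $k$-modules. The only non-routine step is the vanishing of the middle binomial coefficients $\begin{bmatrix} N \\ l \end{bmatrix}$; this is essentially the observation already built into the paper's conventions on $q$-numbers (namely that $[N]=0$ while the other $[j]$ remain non-zero-divisors), so no serious obstacle is anticipated.
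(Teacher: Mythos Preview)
Your proposal is correct and follows essentially the same approach as the paper: apply Lemma~\ref{homtennp} with $n=N$, use that $\begin{bmatrix} N \\ l \end{bmatrix}=0$ for $1\le l\le N-1$ (since $[N]=0$ while $[l]![N-l]!$ is a non-zero-divisor), and then note that the surviving endpoint terms vanish because $d_U^{\{N\}}=d_V^{\{N\}}=0$.
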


\begin{proof}
Since
$0=[N]!=[l]![N-l]!\begin{bmatrix} N \\ l \\ \end{bmatrix}$
and $[l]![N-l]!$ is  a non-zero-divisor for any $1 \le l \le N-1$,
we have $\begin{bmatrix} N \\ l \\ \end{bmatrix}=0$ for any $1 \le l \le N-1$.
By Lemma \ref{homtennp},
\[
d_{\qhom (U, V)}^{\{N\}}(f)=d^{\{N\}}_V\circ f+(-1)^{N}q^{\frac{N(N-1)}{2}}f\circ d^{\{N\}}_U=0
\]
for any $f \in \Hom(U, V)$.
Similarly we have
$d_{U\qten V}^{\{N\}}=0$.
\end{proof}
 
 The following lemma is easy to check.
 
\begin{lemma}\label{qten01}
For $NDG$ $k$-modules $U, V$ and $W$, the following hold.
\begin{enumerate}
\item We have the canonical isomorphism $(U\qten V)\qten W \simeq U\qten (V\qten W)$ in $\cat{C}_{Ndg}(k)$.
\item $V$ induces the functors $V\qten -: \cat{C}_{Ndg}(k) \to \cat{C}_{Ndg}(k)$, $- \qten V: \cat{C}_{Ndg}(k) \to \cat{C}_{Ndg}(k)$.
\item $V$ induces the functors $\qhom(V, -) : \cat{C}_{Ndg}(k) \to \cat{C}_{Ndg}(k)$, $\qhom(-, V): \cat{C}_{Ndg}(k) \to \cat{C}_{Ndg}(k)$.
\item We have the canonical isomorphism 
\[
\Hom_{\cat{C}_{Ndg}(k)}(U\qten V,W) \simeq \Hom_{\cat{C}_{Ndg}(k)}(U, \qhom(V, W)).
\]
\item We have an isomorphism $U\qten V \simeq V\overset{\bullet}{\ten}^{q^{-1}} \!\! U$ in $\cat{C}_{Ndg}(k)$.
\item We have an isomorphism 
\[\Hom_{\cat{C}_{Ndg}(k)}(U\qten V,W) \simeq \Hom_{\cat{C}_{Ndg}(k)}(V, \Hom^{\bullet q^{-1}} (U, W)).
\]
\end{enumerate}
\end{lemma}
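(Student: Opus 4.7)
The plan is to verify each of the six clauses by constructing the candidate map on the underlying graded $k$-modules (these are standard tensor--Hom manipulations for $\cat{Gr}(k)$) and then checking compatibility with the $N$-differentials using the defining formulas $d_{U\qten V}(u\otimes v)=d_U(u)\otimes v+q^{|u|}u\otimes d_V(v)$ and $d_{\qhom(U,V)}(f)=d_V\circ f-q^{|f|}f\circ d_U$.

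For (1), I take the usual $k$-linear associator $(u\otimes v)\otimes w \mapsto u\otimes(v\otimes w)$ on the graded level; it is an isomorphism in $\cat{Gr}^0(k)$. To see it commutes with the $N$-differentials, I expand $d_{(U\qten V)\qten W}((u\otimes v)\otimes w)$ using the formula twice and get $d_U(u)\otimes v\otimes w+q^{r}u\otimes d_V(v)\otimes w+q^{r+s}u\otimes v\otimes d_W(w)$ with $r=|u|, s=|v|$, which matches $d_{U\qten(V\qten W)}$ evaluated on $u\otimes(v\otimes w)$. For (2) and (3), I send $f\colon X\to Y$ in $\cat{C}_{Ndg}(k)$ to $V\qten f$ (resp.\ $f\qten V$, $\qhom(V,f)$, $\qhom(f,V)$) defined by the evident formulas; since $f$ has degree $0$ and commutes with differentials, the factors of $q^{|\,\cdot\,|}$ in the definitions of $d_{V\qten X}$ and $d_{\qhom(V,X)}$ pull through unchanged, so the induced maps lie in $\cat{C}_{Ndg}(k)$. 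Functoriality is immediate from the pointwise definitions.

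For (4), I use the classical adjunction map $\Phi\colon\Hom_{\cat{Gr}^0(k)}(U\qten V,W)\iso\Hom_{\cat{Gr}^0(k)}(U,\qhom(V,W))$, $\Phi(\phi)(u)(v)=\phi(u\otimes v)$; this preserves the grading because $\Phi(\phi)(u)$ has the same degree as $\phi(u\otimes-)$. It remains to check that $\phi$ commutes with $d$ if and only if $\Phi(\phi)$ does. Writing $\Phi(\phi)(u)=\phi(u\otimes-)$ and applying $d_{\qhom(V,W)}$ produces exactly the two terms $d_W\phi(u\otimes v)$ and $-q^{|u|}\phi(u\otimes d_V(v))$, while $\Phi(\phi)(d_U(u))(v)=\phi(d_U(u)\otimes v)$; comparing with $\phi\circ d_{U\qten V}(u\otimes v)$ matches the conditions termwise.

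For (5), I define the twist $\tau\colon U\qten V\to V\overset{\bullet}{\ten}^{q^{-1}}\! U$ by $\tau(u\otimes v)=q^{-rs}v\otimes u$ on $U^{r}\otimes V^{s}$ (or any equivalent normalization); direct substitution into the differential formulas shows that the powers of $q$ on one side convert to powers of $q^{-1}$ on the other. Finally, (6) is obtained by composing (4) with (5): the adjunction $\Hom(U\qten V,W)\simeq\Hom(V\overset{\bullet}{\ten}^{q^{-1}}\!U,W)\simeq\Hom(V,\Hom^{\bullet q^{-1}}(U,W))$ using the $q^{-1}$-analogue of the tensor--Hom adjunction established exactly as in (4). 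The only real bookkeeping concern throughout is tracking the correct sign and $q$-power conventions; once these are set, every verification is a routine expansion, which is why the lemma is asserted as ``easy to check.''
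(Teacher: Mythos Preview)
Your proof is correct and follows essentially the same outline as the paper's own proof: parts (1)--(3) are declared easy, (4) is verified via the ordinary tensor--Hom adjunction, (5) is the explicit twist map, and (6) is deduced from (4) and (5). Incidentally, your normalization $\tau(u\otimes v)=q^{-rs}v\otimes u$ in (5) is the one that actually intertwines the differentials; the paper records the exponent as $q^{rs}$, which is a sign slip.
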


\begin{proof}
(1), (2), (3) It is easy.
\par\noindent
(4) By the ordinary isomorphism $\Hom(U^r\ten V^s, W^{r+s}) \iso \Hom(U^r \Hom(V^s, W^{r+s}))$, it is easy to check.
\par\noindent
(5) The morphism $U\qten V \to V\overset{\bullet}{\ten}^{q^{-1}}\!\! U$ which is defined by $U^r\ten V^s \to V^s\ten U^r ~(u\ten v \mapsto q^{rs}v \ten u)$ is an isomorphism in $\cat{C}_{Ndg}(k)$.
\par\noindent
(6) By (4) and (5).
\end{proof}

\begin{remark}\label{exhomtenNdgk}
In the above $V\qten-$ (resp., $\qhom(V,-)$) is left (resp., right) exact in the sense of Remark \ref{AbNdgk}.
\end{remark}


\subsection{$N$-differential graded categories}\label{NDGcat}

In this subsection, we give the definition of 
the $N$-differential graded category with respect to a primitive $N$-th root $q$ of $1$ and 
the definition of the homotopy category of the $N$-differential graded category.

\begin{definition}\label{def:NDG}
An $N_qDG$ category $\ma$ is defined by the following datum.
\begin{enumerate}
\item A class of objects $Ob\ma$.
\item The morphism set
\[
\ma(A, B)=\coprod_{i \in \z}\ma^i(A, B)
\]
which is an NDG $k$-module for any $A$ and $B$ in $Ob\ma$, and
the composition
\[\begin{array}{rll}
\mu : & \ma(B,C)\qten \ma(A,B) \to \ma(A,C), & \mbox{in}~\cat{C}_{Ndg}(k). \\
 & (f \otimes g \mapsto fg)
\end{array}\]
That is, $\mu \circ(\mu\ten 1)=\mu \circ(1\ten\mu)$ in $\cat{C}_{Ndg}(k)$.
\end{enumerate}
\end{definition}

The above condition that $\mu$ lies in $\cat{C}_{Ndg}(k)$ is equivalent to
that $\mu$ lies in $\cat{Gr}^0(k)$ and
satisfies
$d_{\ma(A,C)}\circ \mu=\mu \circ d_{\ma(B,C)\qten \ma(A,B)},$
namely,
\[
d(fg)=d(f)g+q^rfd(g)
\]
for any $f \in \ma ^r(B,C)$ and $g \in \ma(A,B)$.

\begin{example}\label{NqDG(k)}
Let $N_qdg(k)$ be the category of $NDG$ $k$-modules of which morphism sets $\qhom(X, Y)$ for all $NDG$ $k$-modules $X, Y$.
Then $N_qdg(k)$ is a $N_qDG$ category.
\end{example}

We give the definitions of right $N$- differential graded modules over an $N_qDG$ category and
morphisms between these modules.

\begin{definition}\label{def:Grmod}
Let $\ma$ be a $N_qDG$ category. A right graded $\ma$-module $X=(X ; \rho_X)$ is a collection of 
a graded $k$-module $X(A)$ for any objet $A$ in $\ma$ with a homogeneous morphism of degree $0$ as a scalar multiplication
\[\begin{array}{rll}
\rho_X :  & X(A_1)\otimes \ma (A_2, A_1) \to X(A_2)  & \text{in $\cat{Gr}^0(k)$} \\
& ( x\otimes f \mapsto xf)
\end{array}\]
satisfying 
\begin{enumerate}
\item $x1_{A}=x$ for any $A \in \ma$, $x \in X(A)$ and $1_{A} \in \ma (A, A)$.
\item $\rho_X\circ(1\ten\mu)=\rho_X\circ(\rho_X\ten 1)$ in $\cat{Gr}^0(k)$.
\end{enumerate}
Namely, we have $x(fg)=(xf)g$ for any $x \in X(A_1)$, $f \in \ma (A_2, A_1)$, $g\in \ma (A_3, A_2)$ and any $A_1, A_2, A_3 \in \ma$. 
\par\noindent
The category $\cat{Gr}^0(\ma)$ of right graded $\ma$-modules is defined by the following datum:
\begin{enumerate}
\item objects are right graded $\ma$-modules.
\item For right graded $\ma$-modules$X=(X; \rho_X), Y=(Y; \rho_Y)$, a morphism $F:X \to Y$ is defined by a collection of morphisms $F_A: X(A) \to Y(A)$ in $\cat{Gr}^0(k)$
such that 
\[\begin{CD}
X(A_1)\otimes \ma(A_2, A_1) @>\rho_X>> X(A_2)  \\
@VF_{A_1}\ten 1VV @VVF_{A_2}V \\
Y(A_1)\otimes \ma(A_2, A_1) @> \rho_Y>> Y(A_2) 
\end{CD}\]
is commutative in $\cat{Gr}^0(k)$.
\end{enumerate}
\end{definition}

\begin{definition}\label{def:NDGmod}
A right NDG $\ma$-module $X=(X, d_X; \rho_X)$ is a collection of 
an NDG $k$-module $X(A)=(X(A), d_{X(A)})$ for any objet $A$ of $\ma$
with a homogeneous morphism of degree $0$ as a scalar multiplication
\[\begin{array}{rll}
\rho_X : X(A_1)\qten  \ma (A_2, A_1) \to X(A_2) & \text{in $\cat{C}_{Ndg}(k)$}
\end{array}\]
satisfying 
\begin{enumerate}
\item $x1_{A}=x$ for any $A \in \ma$, $x \in X(A)$ and $1_{A} \in \ma (A, A)$.
\item $\rho_X\circ(1\ten\mu)=\rho_X\circ(\rho_X\ten 1)$ in $\cat{C}_{Ndg}(k)$.
\end{enumerate}
The category $\cat{C}_{Ndg}(\ma)$ of right NDG $\ma$-modules is defined by the following datum:
\begin{enumerate}
\item objects are right $NDG$ $\ma$-modules.
\item For right $NDG$ $\ma$-modules$X=(X, \rho_X), Y=(Y, \rho_Y)$, a morphism $F:X \to Y$ is defined by a collection of morphisms $F_A: X(A) \to Y(A)$ in $\cat{C}_{Ndg}(k)$
such that 
\[\begin{CD}
X(A_1)\qten \ma(A_2, A_1) @>\rho_X>> X(A_2)  \\
@VF_{A_1}\ten 1VV @VVF_{A_2}V \\
Y(A_1)\qten \ma(A_2, A_1) @> \rho_Y>> Y(A_2) 
\end{CD}\]
is commutative in $\cat{C}_{Ndg}(k)$.
\end{enumerate}
\par\noindent
Similarly, left graded $\ma$-modules and left $NDG$ $\ma$-modules are defined.
\end{definition}

\begin{remark}\label{bimod}
As well as Remark \ref{AbNdgk}, it is easy to see that $\cat{C}_{Ndg}(\ma)$ and $\cat{Gr}^0(\ma)$ are abelian categories with ordinary exact sequences.
\par\noindent
By Lemma \ref{qten01}, we know that the above definition of left $NDG$ $\ma$-modules is equivalent to covariant functors from $\ma$ to $N_qdg(k)$ as the same as \cite{Ke1}.
But by the language of functors, right $NDG$ $\ma$-modules are defined by contravariant functors from $\ma$ to the $N_{q^{-1}}DG$ category $N_{q^{-1}}dg(k)$,
because by Lemma \ref{qten01} the opposite category $\ma^{op}$ of an $N_qDG$ category $\ma$ is an $N_{q^{-1}}DG$ category.
Therefore in case of $N >2$ it is difficult to define right NDG $\ma$-modules, especially NDG $\mb$-$\ma$-bimodules on the same ground (see Definition \ref{NDG-bimodule}).
\end{remark}

\begin{example}
For any $N_qDG$ category $\ma$ and any object $A$ of $\ma$,
$A\,\hat{}=\ma(-, A)$ is a right NDG $\ma$-module and
$\;\hat{}A=\ma(A, -)$ is a left NDG $\ma$-module.
\end{example}

Let $\ma$ be an $N_qDG$ category. 
A homogeneous morphism $F : X \to Y$ of degree $n$ between right graded $\ma$-modules $X$ and $Y$ is
a collection of $F_A$ in $\Hom^n(X(A), Y(A))$ for any object $A$ of $\ma$ satisfying 
\[
F_{A_2}(xf)=F_{A_1}(x)f
\] 
for any $x \in X(A_1)$ and $f \in \ma(A_2, A_1)$.

We denote by $\Hom_{\ma}^n(X, Y)$ 
the set of  homogeneous morphisms of degree $n$ between right graded $\ma$-modules $X$ and $Y$.
For $F \in \Hom_{\ma}^n(X, Y)$ and $G \in \Hom_{\ma}^m(Y, Z)$,
we define $(GF)_A=G_AF_A$, so $GF \in \Hom_{\ma}^{n+m}(X, Z)$.

\begin{definition}
Let $\ma$ be an $N_qDG$ category. 
The category $\cat{Gr}(\ma)$ of right graded $\ma$-modules is defined as follows.
\begin{enumerate}
\item Objects are right graded $\ma$-modules.
\item The morphism set between $X$ and $Y$ is given by
\[
\Hom_{\cat{Gr}(\ma)}(X, Y)=\coprod_{i \in \mathbb{Z}}\Hom_{\ma}^i(X, Y).
\]
\end{enumerate}
\end{definition}

To give the definition of 
the homotopy category $\cat{K}_{Ndg}(\ma)$ over an $N_qDG$ category $\ma$,
we show the following lemmas.

\begin{lemma}\label{NGD(A)}
Let $\ma$ be an $N_qDG$ category, $X$, $Y$ right NDG $\ma$-modules.
For any $F\in \Hom_{\ma}^{i}(X, Y)$ and $i \in \z$,
we have 
\[
d_Y\circ F-q^iF\circ d_X \in \Hom_{\ma}^{i+1}(X, Y) .
\]
Therefore, $(\coprod_{i\in \z}\Hom_{\ma}^i(X, Y), (d_Y\circ (-) -q^i(-)\circ d_X)_{i \in\z})$
is an $N$-differential graded $k$-module.
We denote  $(\coprod_{i\in \z}\Hom_{\ma}^i(X, Y), (d_Y\circ (-) -q^i(-)\circ d_X)_{i \in\z})$
by 
\[\qhom_{\ma}(X, Y)=(\qhom_{\ma}(X, Y), d_{\qhom_{\ma}(X,Y)}) .
\]
\end{lemma}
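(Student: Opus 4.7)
The statement has two independent parts: (i) the map $D_i(F) := d_Y\circ F - q^i F\circ d_X$ sends $\Hom_{\ma}^i(X,Y)$ into $\Hom_{\ma}^{i+1}(X,Y)$, i.e.\ preserves $\ma$-linearity; and (ii) the resulting endomorphism $D := (D_i)_{i\in\z}$ of the graded $k$-module $\coprod_i \Hom_{\ma}^i(X,Y)$ satisfies $D^{\{N\}}=0$. The plan is to deduce (i) from the Leibniz rule that the scalar multiplications $\rho_X,\rho_Y$ force on $d_X,d_Y$, and to deduce (ii) from the formulas of Lemma~\ref{homtennp} together with the vanishing $\begin{bmatrix} N \\ l \end{bmatrix}=0$ for $1\le l\le N-1$ already used in Corollary~\ref{homtenNdg}.

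For (i), the hypothesis that $\rho_X$ is a morphism in $\cat{C}_{Ndg}(k)$ translates into the identity $d_{X(A_2)}(xf)=d_X(x)f+q^s x\,d_{\ma}(f)$ for $x\in X(A_1)^s$ and $f\in \ma(A_2,A_1)$, and analogously for $Y$. Given $F\in\Hom_{\ma}^i(X,Y)$, so that $F(x)\in Y(A_1)^{s+i}$ and $F(xf)=F(x)f$, I would compute both sides of
\[
D_i(F)_{A_2}(xf) \;=\; D_i(F)_{A_1}(x)\,f,
\]
expanding $d_Y(F(x)f)$ via the Leibniz rule with the weight $q^{s+i}$ and $q^i F(d_X(xf))$ via the Leibniz rule applied inside $F$ with weight $q^{i+s}$. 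The two mixed terms carrying $F(x)\,d_{\ma}(f)$ have matching coefficients $q^{s+i}$ and cancel, leaving precisely $D_i(F)(x)f$. This is a straightforward but careful coefficient-tracking exercise; the only real subtlety is making sure the $q^i$ (coming from the degree of $F$) combines correctly with the $q^s$ (coming from the degree of $x$), which is what forces the prefactor $q^i$ in the definition of $D_i$ in the first place.

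For (ii), once (i) is in place one may forget the module structure and view $D$ as the restriction to $\Hom_{\ma}(X,Y) \subseteq \qhom(X,Y)$ of the differential built out of $d_Y\circ(-)$ and $-q^r(-)\circ d_X$. Then Lemma~\ref{homtennp}(1) applies verbatim and gives
\[
D^{\{N\}}(F) \;=\; \sum_{l=0}^{N}(-1)^l q^{li+\frac{l(l-1)}{2}} \begin{bmatrix} N \\ l \end{bmatrix} d_Y^{\{N-l\}}\circ F\circ d_X^{\{l\}}.
\]
The extreme terms $l=0$ and $l=N$ vanish because $d_Y^{\{N\}}=0$ and $d_X^{\{N\}}=0$, and the intermediate terms vanish because $\begin{bmatrix} N \\ l \end{bmatrix}=0$ for $1\le l\le N-1$, exactly as in Corollary~\ref{homtenNdg}. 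Hence $D^{\{N\}}=0$, so $\qhom_{\ma}(X,Y)$ is an $NDG$ $k$-module. The main obstacle is simply the clean bookkeeping in (i); part (ii) is then essentially free from the work already done in Section~\ref{NDGmod}.
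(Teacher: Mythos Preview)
Your proposal is correct and follows essentially the same route as the paper: the paper's proof is exactly the Leibniz-rule computation you describe in (i), expanding $d_Y(F(x)f)$ and $q^iF(d_X(xf))$ and cancelling the two $q^{s+i}F(x)d(f)$ terms. The paper does not even spell out your part (ii), since once (i) shows that the $\ma$-linear subspace is stable under the differential of $\qhom(X(A),Y(A))$, the $N$-nilpotence is immediate from Corollary~\ref{homtenNdg}; your explicit invocation of Lemma~\ref{homtennp}(1) is just a more detailed version of that observation.
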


\begin{proof}
Given $F \in \Hom_{\ma}^i(X, Y)$,
for any $x \in X^s(A_1)$ and $f \in \ma(A_2, A_1)$,
\begin{align*}
(d_{\qhom_{\ma} (X, Y)}(F))_{A_2}(xf)
=&(d\circ F_{A_2}-q^iF_{A_2}\circ d)(xf)\\
=&d(F_{A_1}(x)f)-q^iF_{A_2}(d(x)f+q^sxd(f))\\
=&d(F_{A_1}(x))f+q^{s+i}F_{A_1}(x)d(f)\\ &-q^i(F_{A_1}(d(x))f+q^sF_{A_1}(x)d(f))\\
=&d(F_{A_1}(x))f-q^iF_{A_1}(d(x))f\\
=&(d_{\qhom_{\ma} (X, Y)}(F))_{A_1}(x)f.
\end{align*}
Hence we have $d_{\qhom_{\ma}(X,Y)} \in \Hom_{\ma}^{i+1}(X,Y)$.
\end{proof}

\begin{lemma}\label{homX(A)}
For any right NDG $\ma$-module $X$ and any object $A$ of $\ma$,
\[
\qhom_{\ma}(A\,\hat{}, X)\simeq X(A) ~\mbox{in}~ \cat{C}_{Ndg}(k). 
\]
In particular, $\Hom^{n}_{\ma}(A\,\hat{}, X)\simeq X(A)^n$ for any $n$.
\end{lemma}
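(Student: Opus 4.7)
The plan is to carry out the usual (graded) Yoneda-type argument, adapted to the $N$-differential setting. Define a map
\[
\Phi \colon \qhom_{\ma}(A\,\hat{}, X) \to X(A), \qquad F \mapsto F_A(1_A),
\]
and a candidate inverse
\[
\Psi \colon X(A) \to \qhom_{\ma}(A\,\hat{}, X), \qquad x \mapsto F^x,
\]
where for $x \in X(A)^n$ the morphism $F^x$ is given at $B \in \ma$ by $F^x_B(f) = xf$ for $f \in \ma(B,A)$. Both are clearly homogeneous of degree $0$ as maps of graded $k$-modules.

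First I would verify that $\Psi(x)$ really is a morphism of right graded $\ma$-modules, i.e.\ $F^x_{B'}(fg) = F^x_B(f) g$ for $g \in \ma(B',B)$; this is precisely the associativity axiom for the scalar multiplication $\rho_X$ of $X$. Next I would check that $\Phi$ and $\Psi$ are mutually inverse: $\Phi \Psi(x) = x \cdot 1_A = x$ by the unit axiom, and for $F \in \Hom_{\ma}^n(A\,\hat{},X)$ and any $f \in \ma(B,A)$, the $\ma$-linearity of $F$ gives
\[
(\Psi \Phi(F))_B(f) = F_A(1_A) f = F_B(1_A \cdot f) = F_B(f),
\]
so $\Psi \Phi(F) = F$.

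The only nontrivial point is compatibility with the differentials. The key observation is that $d(1_A)=0$: applying the Leibniz rule \textup{(}with $r=0$\textup{)} to $1_A = 1_A \cdot 1_A$ gives $d(1_A) = d(1_A) + d(1_A)$, whence $d(1_A) = 0$. Consequently, for $F \in \Hom^n_{\ma}(A\,\hat{},X)$,
\[
\Phi\bigl(d_{\qhom_{\ma}(A\,\hat{},X)}(F)\bigr) = d_{X(A)}\bigl(F_A(1_A)\bigr) - q^n F_A\bigl(d(1_A)\bigr) = d_{X(A)}\bigl(\Phi(F)\bigr),
\]
so $\Phi$ intertwines the differentials. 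Hence $\Phi$ is an isomorphism in $\cat{C}_{Ndg}(k)$, and taking degree-$n$ components yields the final assertion $\Hom^n_{\ma}(A\,\hat{},X) \simeq X(A)^n$.

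There is no real obstacle here; the argument is a direct transcription of the Yoneda lemma, and the only $N$-differential input is the verification $d(1_A)=0$, which falls out immediately from the $q$-Leibniz rule at degree $0$.
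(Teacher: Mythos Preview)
Your proof is correct and follows essentially the same Yoneda-type approach as the paper: the paper defines the same maps $\phi(F)=F_A(1_A)$ and $\psi(x)_B(a)=xa$, checks they are mutually inverse, and verifies compatibility with the differentials using $d(1_A)=0$. You are in fact slightly more explicit than the paper in justifying $d(1_A)=0$ from the $q$-Leibniz rule, and slightly more economical in only checking that $\Phi$ intertwines the differentials (the paper checks both $\phi$ and $\psi$, which is redundant once bijectivity is known).
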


\begin{proof}
Define $\phi : \qhom_{\ma}(A\,\hat{}, X) \to X(A)$ by
\[
\phi(F)=F_A(1_A)
\]
for any $F \in \Hom^i(A\,\hat{},X)$, and $\psi : X(A) \to \Hom ^{\bullet q}_{\ma}(A\,\hat{}, X)$ by
\[
\psi(x)_B(a)=xa
\]
for any $x \in X(A)^i$, $a \in A\,\hat{}\,(B)$ and $B \in \ma$. 
\[\begin{aligned}\phi(d_{\qhom_{\ma}(A\,\hat{}, X)}(F))
& = (d_X\circ F -q^i Fd_{A\, \hat{}})_A(1_A) \\
& = d_X(F_A(1_A)) - q^i Fd_A(1_A) \\
& = d_X(F_A(1_A)) \\
& = d_X(\phi(F)), \\
d_X(\psi(x))_B(a)
& = (d_X\circ \psi(x) -q^i \psi(x)d_{A\, \hat{}})_B(a) \\
& = d_X(xa) - q^i xd(a) \\
& = d_X(x)a+q^i xd(a) - q^i xd(a) \\
& = \psi(d(x))_B(a).
\end{aligned}\]
Since $\phi\psi=id$, $\psi\phi=id$,
we have $\Hom ^{\bullet q}_{\ma}(A\,\hat{}, X)\simeq X(A) ~\mbox{in}~ \cat{C}_{Ndg}(k)$.
\end{proof}


\begin{definition}
Let $X$ be an $N$-differential $k$-module.
For $0< r<N$ and $i\in\z$, we define the following $k$-modules:
\[\begin{aligned}
\opn{Z}^i_{(r)}(X) &:=\opn{Ker}(d_X^{i+r-1}\cdots d_X^i), &
\opn{B}^i_{(r)}(X) &:=\opn{Im}(d_X^{i-1}\cdots d_X^{i-r}), \\
\opn{C}^i_{(r)}(X) &:=\opn{Cok}(d_X^{i-1}\cdots d_X^{i-r}), &
\opn{H}^i_{(r)}(X)&:=\opn{Z}^i_{(r)}(X)/\opn{B}^i_{(N-r)}(X) .
\end{aligned}\]
\end{definition}

\begin{lemma}\label{ZBA}
Let $\ma$ be an $N_qDG$ category and $X$, $Y$ and $Z$ right NDG $\ma$-modules.
\begin{enumerate}
\item For any $F \in \opn{Z}^m_{(1)}\Hom _{\ma}^{\bullet q}(X, Y)$
and $G \in \opn{Z}^n_{(1)}\Hom _{\ma}^{\bullet q}(Y, Z)$,
we have that $GF \in \opn{Z}^{m+n}_{(1)}\Hom _{\ma}^{\bullet q}(X, Z)$.
\item For any $F \in \opn{B}^m_{(N-1)}\Hom _{\ma}^{\bullet q}(X, Y)$ and
$G \in \opn{Z}^n_{(1)}\Hom _{\ma}^{\bullet q}(Y, Z)$
we have that $GF \in \opn{B}^{m+n}_{(N-1)}\Hom _{\ma}^{\bullet q}(X, Z)$.
\item For any $F \in \opn{B}^m_{(N-1)}\Hom _{\ma}^{\bullet q}(Y, Z)$ and
$G \in \opn{Z}^n_{(1)}\Hom _{\ma}^{\bullet q}(X, Y))$
we have that $FG \in \opn{B}^{m+n}_{(N-1)}\Hom _{\ma}^{\bullet q}(X, Z)$.
\end{enumerate}
\end{lemma}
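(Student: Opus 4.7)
The plan is to reduce all three parts to a single $q$-binomial expansion for the $n$-fold differential of a composition. First I would establish the graded Leibniz rule for $\qhom_{\ma}$: for $F \in \Hom_{\ma}^m(X,Y)$ and $G \in \Hom_{\ma}^n(Y,Z)$,
\[
d_{\qhom_{\ma}(X,Z)}(GF) = d_{\qhom_{\ma}(Y,Z)}(G)\,F + q^n G\,d_{\qhom_{\ma}(X,Y)}(F).
\]
This is a direct computation from the defining formula $d(H) = d_{\mathrm{target}}\circ H - q^{\deg H}H\circ d_{\mathrm{source}}$ in Lemma \ref{NGD(A)}: the two middle terms $\pm q^n G d_Y F$ cancel.

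With the Leibniz rule in hand, setting $\phi(G\otimes F):=d(G)F$ and $\psi(G\otimes F):=q^n G\,d(F)$ gives $\psi\phi = q\phi\psi$, so Lemma \ref{qnnp} yields the expansion
\[
d^{\{k\}}(GF) = \sum_{l=0}^{k} q^{ln}\begin{bmatrix} k \\ l \\ \end{bmatrix} d^{\{k-l\}}(G)\cdot d^{\{l\}}(F) \qquad (1 \le k \le N),
\]
in exact parallel with Lemma \ref{homtennp}(2). From here each claim falls out by isolating the surviving term. For (1) take $k=1$: both $d(F) = 0$ and $d(G)=0$ kill the right hand side. For (2), write $F = d^{\{N-1\}}(F_0)$ with $F_0 \in \Hom_{\ma}^{m-N+1}(X,Y)$ and apply the expansion to $GF_0$ at $k=N-1$; since $d(G)=0$ annihilates $d^{\{N-1-l\}}(G)$ for every $l<N-1$, only the $l=N-1$ term survives and delivers $d^{\{N-1\}}(GF_0) = q^{(N-1)n} GF$, so $GF \in \opn{B}^{m+n}_{(N-1)}\qhom_{\ma}(X,Z)$ (using that $q$ is a unit, as $q^N=1$). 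Part (3) is symmetric: write $G = d^{\{N-1\}}(G_0)$ and apply the expansion to $G_0 F$; now $d(F)=0$ leaves only the $l=0$ term, which is exactly $GF$.

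I do not foresee any real obstacle: everything reduces to the Leibniz identity plus one application of Lemma \ref{qnnp}, and in the crucial $l=N-1$ and $l=0$ slots the $q$-binomial coefficient is $1$, so no inversion of $q$-numbers is needed. The only delicate point is the bookkeeping of the exponent $q^{ln}$, where $n$ is the degree of the \emph{left} factor $G$; confusing this with the degree of $F$ would misalign the argument, but it follows correctly from Lemma \ref{homtennp}(2) applied to the tensor $G\otimes F$.
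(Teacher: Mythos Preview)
Your argument is correct. Part (1) is identical to the paper's. For (2) and (3) you take a slightly different route: having iterated the graded Leibniz rule to obtain
\[
d^{\{k\}}(GF)=\sum_{l=0}^{k} q^{ln}\begin{bmatrix} k \\ l\end{bmatrix} d^{\{k-l\}}(G)\,d^{\{l\}}(F),
\]
you set $k=N-1$ and let the cycle condition kill all but a single term. The paper instead expands $d^{\{N-1\}}(F')$ via Lemma~\ref{homtennp}(1) as a sum of terms $d_Y^{\{N-1-l\}}\circ F'\circ d_X^{\{l\}}$, uses $d_Z\circ G=q^{n}G\circ d_Y$ to push $G$ past each $d_Y^{\{N-1-l\}}$, and then has to recognise the resulting full sum as $q^{n}d^{\{N-1\}}(GF')$ (citing Lemma~\ref{qninv} for the coefficient match). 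Your organisation is more economical: only one summand survives, so there is no sum to reassemble and no appeal to Lemma~\ref{qninv}. One small slip: in your sketch of (3) you have swapped the names $F$ and $G$ relative to the statement --- it is $F$ that lies in $\opn{B}^{m}_{(N-1)}$ and the composite in question is $FG$, so one should write $F=d^{\{N-1\}}(F_0)$ and expand $d^{\{N-1\}}(F_0G)$; the mathematics is unaffected.
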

\begin{proof}
For any $F \in \opn{Z}^m_{(1)}\Hom _{\ma}^{\bullet q}(X, Y)$
and $G \in \opn{Z}^n_{(1)}\Hom _{\ma}^{\bullet q}(Y, Z)$, we have that
$d(GF)=d(G)F+q^nGd(F)=0$. Therefore $GF \in \opn{Z}^{m+n}_{(1)}\Hom _{\ma}^{\bullet q}(X, Z)$.
\par\noindent
For any $F \in \opn{B}^m_{(N-1)}\Hom _{\ma}^{\bullet q}(X, Y)$ and
$G \in \opn{Z}^n_{(1)}\Hom _{\ma}^{\bullet q}(Y, Z)$, we have that $F=d^{\{N-1\}}(F')$ for some 
$F' \in \Hom _{\ma}^{m-N+1}(X, Y)$ and $d\circ G=q^nG\circ d$.
By Lemma \ref{homtennp} and Lemma \ref{qninv},
\begin{align*}
GF
=&G\circ d^{\{N-1\}}(F')\\
=&G\circ \sum_{l=0}^{N-1}(-1)^lq^{l(m-N+1)+\frac{l(l-1)}{2}}
\begin{bmatrix} N-1\\ l \end{bmatrix}d^{\{N-l-1\}}\circ F' \circ d^{\{l\}}\\
=&\sum_{l=0}^{N-1}(-1)^lq^{l(m-N+1)+\frac{l(l-1)}{2}-n(N-l-1)}
\begin{bmatrix} N-1\\ l \end{bmatrix}d^{\{N-l-1\}}\circ GF' \circ d^{\{l\}}\\
=&q^nd^{\{N-1\}}(GF'). 
\end{align*}
Hence $GF \in \opn{B}^{m+n}_{(N-1)}\Hom _{\ma}^{\bullet q}(X, Z)$.
\par\noindent
Similarly (3) is obtained.
\end{proof}

For an $N_qDG$ category $\ma$, it is easy to see that 
\[
\Hom_{\cat{C}_{Ndg}(\ma)}(X, Y)=\opn{Z}^0_{(1)} \Hom_{\ma}^{\bullet q}(X, Y)
\]
for any $X, Y \in \cat{C}_{Ndg}(\ma) $.

\begin{definition}\label{def:htpcat}
For an $N_qDG$ category $\ma$,
the homotopy category $\cat{K}_{Ndg}(\ma)$ of right $NDG$ $\ma$-modules is defined as follows.
\begin{enumerate}
\item Objects are the right NDG $\ma$-modules.
\item The morphism set between $X$ and $Y$ is given by
\[
\Hom_{\cat{K}_{Ndg}(\ma)}(X, Y)=\opn{H}^0_{(1)} \Hom_{\ma}^{\bullet q}(X, Y)
\]
and the composition is given by the composition of maps (see Lemma \ref{ZBA}).
\end{enumerate}
\end{definition}

\begin{remark}\label{AbNdgA}
For $S \in \Hom_{\ma}^{1-N}(X, Y)$, by Lemmas \ref{qninv}, \ref{homtennp}
\[\begin{aligned}
d^{\{N-1\}}(S)
=&\sum_{l=0}^{N-1}(-1)^lq^{l(-N+1)+\frac{l(l-1)}{2}}
\begin{bmatrix} N-1\\ l \end{bmatrix}d^{\{N-l-1\}}\circ S \circ d^{\{l\}} \\
=&\sum_{l=0}^{N-1}d^{\{N-l-1\}}\circ S \circ d^{\{l\}} .
\end{aligned}\]
Then the above definition of morphisms is equivalent to the definition of homotopy relation in the sense of \cite{Ka}.
\end{remark}


\subsection{Shift functor and suspension functor}\label{shiftsuspension}
In this subsection, we show that
the homotopy category $\cat{K}_{Ndg}(\ma)$ is an algebraic triangulated category and
study the relationship between the shift functor and the suspension functor on $\cat{K}_{Ndg}(\ma)$.

Let $r$ be an integer and $\ma$ an $N_qDG$ category. 
We define the functor 
\[
U_r : \cat{C}_{Ndg}(\ma) \to \cat{Gr}^0(\ma)
\]
by
$U_rX(A)^n=X(A)^{n+r}$ and $U_r(F_A)^n=F_A^{n+r}$ 
for any right NDG $\ma$-module $X$ and $F \in \Hom_{\cat{C}_{Ndg}(\ma)}(X, Y)$.
For any $a \in  \ma(B, A)$ and integer $n$, we denote by $(a^n_{st})_{l}$ an $l \times l$ matrix
whose $s, t$-entry is
\[
a_{st}^n=0~(s>t)~\mbox{and}~\begin{bmatrix} t-1 \\ s-1 \end{bmatrix} q^{n(t-s)}d^{\{t-s\}}(a)~(s\le t).
\]
We define the functor 
\[Q_r : \cat{Gr}^0(\ma) \to \cat{C}_{Ndg}(\ma)
\] 
by
\begin{align*}
(Q_rX)(A)^n=&\coprod_{i=1}^N X(A)^{r+n-N+i}\\
=&\left\{
x=\begin{pmatrix}
x_1 \\ \vdots \\ x_N 
 \end{pmatrix}
~\middle |~x_i \in X(A)^{r+n-N+i}\right\},\\
xa=&{}^t({}^tx(a^n_{st})_{N}), \\
d_{(Q_rX)(A)}(x)=&Jx={}^t({}^tx{}^tJ),~\mbox{where}~
J=
\begin{pmatrix}
0&1&& \\
&\ddots&\ddots& \\
&&\ddots&1 \\
&&&0
\end{pmatrix},
\end{align*}
for any right graded $\ma$-module $X$, $x \in (Q_rX)(A)^n$ and $a \in \ma(B, A)$
(see Lemma \ref{QX} below),

\[
Q_r(F)^n=\begin{pmatrix}
F^{r+n-N+1}&& \\
&\ddots& \\
&&F^{r+n} \end{pmatrix}
: Q_r(X)^n \to Q_r(Y)^n
\]
for any $F \in \Hom_{\cat{Gr}^0(\ma)}(X, Y)$.

\begin{lemma}\label{QX}
Let $\ma$ be an $N_qDG$ category. For any right graded $\ma$-module $X$,
$Q_rX$ is a right NDG $\ma$-module.
\end{lemma}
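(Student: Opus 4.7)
The plan is to verify that $Q_rX$ satisfies the four conditions defining a right $NDG$ $\ma$-module (Definition \ref{def:NDGmod}): (i) each $(Q_rX)(A)$ is an $N$-differential $k$-module, i.e., $d^{\{N\}}=0$; (ii) the scalar multiplication $\rho_{Q_rX}$ lies in $\cat{C}_{Ndg}(k)$, i.e., is homogeneous of degree $0$ and satisfies the Leibniz rule
\[
d(xa) = d(x)a + q^n x\, d(a) \qquad \bigl(x \in (Q_rX)(A_1)^n,\ a \in \ma(A_2,A_1)\bigr);
\]
(iii) the unit axiom $x \cdot 1_A = x$; and (iv) associativity $(xf)g = x(fg)$. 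Degree bookkeeping is immediate: an entry $x_s$ sits in $X(A)^{r+n-N+s}$ and $x_s a^n_{st}$ in $X(A)^{r+n+p-N+t}$ when $a$ has degree $p$, matching the $t$-th slot of $(Q_rX)(A_2)^{n+p}$.

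For (i), the differential on $(Q_rX)(A)$ is left multiplication by the $N\times N$ Jordan matrix $J$ with $1$'s on the superdiagonal, which is nilpotent of order $N$, so $d^{\{N\}}=J^N=0$. For (iii), strict unitality in $\ma$ combined with the Leibniz rule forces $d(1_A)=0$: from $a=a\cdot 1_A$ for $a\in\ma^i(A,B)$ one gets $q^i a\cdot d(1_A)=0$, and $a=1_A$ yields $d(1_A)=0$; consequently $d^{\{t-s\}}(1_A)=0$ for $t>s$, the matrix $(1_A{}^n_{st})_N$ is the identity, and $x\cdot 1_A=x$.

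For (iv), I would expand both $\bigl(x(fg)\bigr)_t$ and $\bigl((xf)g\bigr)_t$ componentwise with $f\in\ma^p(A_2,A_1)$, $g\in\ma(A_3,A_2)$. The entry $d^{\{t-u\}}(fg)$ appearing in $x(fg)$ is resolved by the Leibniz expansion of Lemma \ref{homtennp}(2); the entry $(xf)_s$ appearing in $(xf)g$ introduces an intermediate summation index. After collecting $q$-powers via $n(s-u)+(n+p)(t-s)=n(t-u)+p(t-s)$, the comparison reduces to the $q$-binomial identity
\[
\begin{bmatrix} t-1 \\ u-1 \end{bmatrix}\begin{bmatrix} t-u \\ s-u \end{bmatrix}
= \begin{bmatrix} t-1 \\ s-1 \end{bmatrix}\begin{bmatrix} s-1 \\ u-1 \end{bmatrix},
\]
each side being equal to $[t-1]!/\bigl([u-1]!\,[s-u]!\,[t-s]!\bigr)$.

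The technical step is (ii), and it is the main obstacle. Using $d(x)_s=x_{s+1}$ for $s<N$ and $d(x)_N=0$, and that the action of $d(x)\in(Q_rX)(A_1)^{n+1}$ uses the coefficient matrix $(a^{n+1}_{st})_N$, I would compare the $t$-th components of $d(xa)$ and $d(x)a+q^n x\,d(a)$. For $1\le t\le N-1$ the coefficient of $x_s$ on the right side is
\[
\left(\begin{bmatrix} t-1 \\ s-2 \end{bmatrix} q^{t-s+1} + \begin{bmatrix} t-1 \\ s-1 \end{bmatrix}\right) q^{n(t-s+1)} d^{\{t-s+1\}}(a),
\]
which collapses to $\begin{bmatrix} t \\ s-1 \end{bmatrix} q^{n(t-s+1)} d^{\{t-s+1\}}(a)$ by the second formula of Lemma \ref{qn}, exactly matching $(xa)_{t+1}=d(xa)_t$. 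The boundary case $t=N$ is delicate: both sides must vanish, and this depends on $\begin{bmatrix} N \\ s-1 \end{bmatrix}=0$ for $2\le s\le N$ (Corollary \ref{homtenNdg}) to annihilate the interior terms, together with $d^{\{N\}}(a)=0$ in the $NDG$ $k$-module $\ma(A_2,A_1)$ to kill the $s=1$ contribution. This is precisely the point where the $N$-nilpotency terminating the Pascal recursion meets the $N$-differential condition in $\ma$.
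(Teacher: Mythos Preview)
Your proof is correct and follows essentially the same approach as the paper: both reduce the Leibniz rule and associativity to entrywise identities among the coefficients $a^n_{st}$, settled via the $q$-Pascal identity of Lemma~\ref{qn} and the Leibniz expansion of Lemma~\ref{Anp} (equivalently Lemma~\ref{homtennp}(2)). The paper packages the Leibniz verification as the matrix identity $(a^n_{st})_N\,{}^tJ={}^tJ(a^{n+1}_{st})_N+q^n(d(a)^n_{st})_N$ and absorbs the boundary $t=N$ into the convention $a^m_{i,N+1}=0$, while you unpack the same computation componentwise and make the uses of $\begin{bmatrix}N\\l\end{bmatrix}=0$ and $d^{\{N\}}(a)=0$ explicit; your treatment of the unit axiom and of $J^N=0$ simply fills in points the paper leaves tacit.
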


\begin{proof}
Since
\[
d(xa)
={}^t({}^tx(a^{n}_{st})_N{}^tJ)
\] 
and
\[
d(x)a+q^nxd(a)
={}^t({}^tx{}^tJ(a^{n+1}_{st})_N)
+q^n{\;}^t({}^tx(d(a)^n_{st})_N)
\]
for any $a \in \ma(A,B)$ and $x \in X(B)^n$,
we show that
\[
(a^{n}_{st})_N{}^tJ={}^tJ(a^{n+1}_{st})_N+q^n(d(a)^n_{st})_N.
\]
The $i$, $j$-entry
\[
((a^{n}_{st})_N{}^tJ)_{i\,j}=a^n_{i\, j+1}~\mbox{where we set}~a^m_{i\, N+1}=0
\]
and
\[
({}^tJ(a^{n+1}_{st})_N)_{i\,j}=a^{n+1}_{i-1\, j}~\mbox{where we set}~a^m_{0\, j}=0.
\]
In the case that $i=1$,
\[
a^{n+1}_{i-1\, j}+q^nd(a)^n_{i\,j}=q^nd(a)^n_{i\,j}=a^n_{i\, j+1}.
\]
In the case that $1\le j< i \le N$,
\[
a^{n+1}_{i-1\, j}+q^nd(a)^n_{i\,j}=a^{n+1}_{i-1\, j}=a^n_{i\, j+1}.
\]
In the case that $2\le i\le j \le N$,
\begin{align*}
a^{n+1}_{i-1\, j}+q^nd(a)^n_{i\,j}
=&\begin{bmatrix} j-1 \\ i-2 \end{bmatrix} q^{(n+1)(j-i+1)}d^{\{j-i+1\}}(a)
+q^n\begin{bmatrix} j-1 \\ i-1 \end{bmatrix} q^{n(j-i)}d^{\{j-i\}}(d(a))\\
=&(q^{j-i+1}\begin{bmatrix} j-1 \\ i-2 \end{bmatrix}+\begin{bmatrix} j-1 \\ i-1 \end{bmatrix})
q^{n(j-i+1)}d^{\{j-i+1\}}(a)\\
=&\begin{bmatrix} j \\ i-1 \end{bmatrix} q^{n(j+1-i)}d^{\{j+1-i\}}(a)\\
=&a^n_{i\, j+1}.
\end{align*}
Since
$(xa)b={}^t({}^tx(a^n_{st})_N(b^{n+m}_{st})_N)$ and $x(ab)={}^t({}^tx((ab)^n_{st})_N)$
for any $a \in \ma ^m(B,C)$, $b \in \ma(A,B)$ and $x \in X(C)^n$,
we show that
$(a^n_{st})_N(b^{n+m}_{st})_N=((ab)^n_{st})_N$.
The $i$-$j$ entry
\begin{align*}
((a^n_{st})_N(b^{n+m}_{st})_N)_{i\, j}
=&\sum_{l=0}^{N}a^n_{i\,l}b^{n+m}_{l\, j}\\
=&\sum_{l=i}^{j}\begin{bmatrix} l-1 \\ i-1 \end{bmatrix} q^{n(l-i)}d^{\{l-i\}}(a)
\begin{bmatrix} j-1 \\ l-1 \end{bmatrix} q^{(n+m)(j-l)}d^{\{j-l\}}(b)\\
=&\sum_{l=i}^{j}q^{n(j-i)+m(j-l)}\begin{bmatrix} l-1 \\ i-1 \end{bmatrix}
\begin{bmatrix} j-1 \\ l-1 \end{bmatrix}d^{\{l-i\}}(a)d^{\{j-l\}}(b)\\
=&\sum_{u=0}^{j-i}q^{n(j-i)+mu}\begin{bmatrix} j-u-1 \\ i-1 \end{bmatrix}
\begin{bmatrix} j-1 \\ j-u-1 \end{bmatrix}d^{\{j-u-i\}}(a)d^{\{u\}}(b)\\
=&\sum_{u=0}^{j-i}q^{n(j-i)+mu}\begin{bmatrix} j-1 \\ i-1 \end{bmatrix}
\begin{bmatrix} j-1 \\ u \end{bmatrix}d^{\{j-u-i\}}(a)d^{\{u\}}(b)\\
=&\begin{bmatrix} j-1 \\ i-1 \end{bmatrix} q^{n(j-i)}
\sum_{u=0}^{j-i}q^{mu}\begin{bmatrix} j-i \\ u \end{bmatrix}d^{\{j-i-u\}}(a)d^{\{u\}}(b)\\
=&\begin{bmatrix} j-1 \\ i-1 \end{bmatrix} q^{n(j-i)}d^{\{j-i\}}(ab)~\mbox{(see Lemma \ref{Anp} below.)}\\
=&((ab)^n_{st})_N)_{i\, j}.
\end{align*}
\end{proof}

\begin{lemma}\label{Anp}
For any $f \in \ma ^r(B,C)$ and $g \in \ma(A,B)$,
\[
d^{\{n\}}(fg)=\sum_{l=0}^nq^{lr}\begin{bmatrix} n \\ l \\ \end{bmatrix}d^{\{n-l\}}(f)d^{\{l\}}(g).
\]
\end{lemma}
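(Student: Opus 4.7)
The plan is to reduce the statement to Lemma \ref{homtennp}(2) by transporting the $q$-Leibniz formula from the $q$-tensor product $\ma(B,C) \qten \ma(A,B)$ to $\ma(A,C)$ along the composition map $\mu$. The key observation is that the axiom of an $N_qDG$ category asserts precisely that $\mu$ lies in $\cat{C}_{Ndg}(k)$, equivalently that $d_{\ma(A,C)} \circ \mu = \mu \circ d_{\ma(B,C) \qten \ma(A,B)}$; iterating this commutation gives $d^{\{n\}} \circ \mu = \mu \circ d^{\{n\}}_{\ma(B,C) \qten \ma(A,B)}$ for every $n \ge 0$.

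With this compatibility in hand, I would apply Lemma \ref{homtennp}(2) to $U = \ma(B,C)$, $V = \ma(A,B)$, $u = f \in U^r$, $v = g \in V$ to obtain
\[
d^{\{n\}}_{U \qten V}(f \otimes g) = \sum_{l=0}^{n} q^{lr} \begin{bmatrix} n \\ l \end{bmatrix} d^{\{n-l\}}(f) \otimes d^{\{l\}}(g),
\]
and then push this identity through $\mu$. Since $\mu(x \otimes y) = xy$, the left-hand side becomes $d^{\{n\}}(fg)$ by the commutation noted above, while the right-hand side becomes $\sum_{l=0}^{n} q^{lr} \begin{bmatrix} n \\ l \end{bmatrix} d^{\{n-l\}}(f)\, d^{\{l\}}(g)$, yielding the claimed formula.

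No essential obstacle arises: the whole content of the lemma is the $q$-commutation of the two ``partial'' operations $fg \mapsto d(f) g$ and $fg \mapsto q^r f d(g)$, which has already been abstracted in Lemma \ref{qnnp} and packaged in Lemma \ref{homtennp}(2). Alternatively, one could give a direct proof by induction on $n$ using Lemma \ref{qn} together with the single-step Leibniz rule $d(fg) = d(f) g + q^r f d(g)$, but this amounts to reproving Lemma \ref{qnnp} in the present context, so the indirect route via $\mu$ is more economical.
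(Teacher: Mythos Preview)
Your proposal is correct and is essentially identical to the paper's own proof: the paper also uses that the composition $\mu$ (there denoted $\rho$) lies in $\cat{C}_{Ndg}(k)$, so $d^{\{n\}}\circ\mu=\mu\circ d^{\{n\}}$, and then applies Lemma~\ref{homtennp}(2) to $\ma(B,C)\qten\ma(A,B)$ before pushing through $\mu$.
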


\begin{proof}
Let $\rho : \ma(B,C)\otimes^{\bullet q} \ma(A,B) \to \ma(A,C)$ in $\cat{C}_{Ndg}(k)$ be the composition of $\ma$.
By Lemma \ref{homtennp},
\begin{align*}
d^{\{n\}}(fg)
=&d^{\{n\}}(\rho(f\otimes g))\\
=&\rho (d^{\{n\}}(f\otimes g))\\
=&\rho (\sum_{l=0}^nq^{lr}\begin{bmatrix} n \\ l \\ \end{bmatrix}d^{\{n-l\}}(f)\otimes d^{\{l\}}(g))\\
=&\sum_{l=0}^nq^{lr}\begin{bmatrix} n \\ l \\ \end{bmatrix}d^{\{n-l\}}(f)d^{\{l\}}(g).
\end{align*} 
\end{proof}

To show Proposition \ref{QUQ}, we prepare the following lemmas.
\begin{lemma}\label{QXAnp}
For $x \in (Q_rX)^n(A)$ and $a \in \ma(B, A)$,
\begin{align*}
d^{\{i\}}(xa)=&\sum_{l=0}^{i}d^{\{i-l\}}(x)a^n_{i-l+1\;i+1}\\
d^{\{N-i\}}(x)a=&\sum_{l=0}^{N-i}d^{\{N-i-l\}}(xa^{n-i}_{i\;i+l})
\end{align*}
\end{lemma}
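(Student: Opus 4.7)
My plan is to derive both identities from the $q$-Leibniz rule (Lemma \ref{homtennp}(2)) combined with the inversion formula of Lemma \ref{qnnp}(2), matching the resulting coefficients via Lemma \ref{qninv}. The first identity is almost immediate; the second is the ``inverse'' form of the Leibniz expansion.

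For the first identity, I would apply Lemma \ref{homtennp}(2) to $xa$ (with $x$ of degree $n$) to obtain
\[
d^{\{i\}}(xa) = \sum_{l=0}^i q^{nl} \begin{bmatrix} i \\ l \end{bmatrix} d^{\{i-l\}}(x)\, d^{\{l\}}(a).
\]
By the definition of the matrix $(a^n_{st})$, together with the symmetry $\begin{bmatrix}i\\i-l\end{bmatrix} = \begin{bmatrix}i\\l\end{bmatrix}$, one has $a^n_{i-l+1,\, i+1} = \begin{bmatrix} i \\ l \end{bmatrix} q^{nl} d^{\{l\}}(a)$, so each summand equals $d^{\{i-l\}}(x)\, a^n_{i-l+1,\, i+1}$, which is the claimed formula.

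For the second identity, I would work on $X(A) \otimes \ma(B, A)$ and set $L_1(y \otimes b) = d(y) \otimes b$, $L_2(y \otimes b) = q^{\deg y}\, y \otimes d(b)$. A direct check gives $L_2 L_1 = q\, L_1 L_2$, and the $q$-Leibniz rule shows that the multiplication sends $(L_1 + L_2)^k(z \otimes c)$ to $d^{\{k\}}(zc)$. Applying Lemma \ref{qnnp}(2) to $L_1^{N-i}(x \otimes a)$ and then multiplying, together with $L_2^l(x \otimes a) = q^{nl}\, x \otimes d^{\{l\}}(a)$, yields
\[
d^{\{N-i\}}(x)\, a = \sum_{l=0}^{N-i} (-1)^l q^{nl+\frac{l(l-1)}{2}} \begin{bmatrix} N-i \\ l \end{bmatrix} d^{\{N-i-l\}}\bigl(x\, d^{\{l\}}(a)\bigr).
\]
Lemma \ref{qninv} with $t = i$ rewrites the coefficient as $q^{(n-i)l}\begin{bmatrix} l+i-1 \\ i-1 \end{bmatrix}$, which is precisely the scalar of $d^{\{l\}}(a)$ appearing in $a^{n-i}_{i,\, i+l}$. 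Hence each summand equals $d^{\{N-i-l\}}(x\, a^{n-i}_{i,\, i+l})$, proving the identity.

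The main obstacle is recognising the second identity as an instance of the inverse $q$-Leibniz rule of Lemma \ref{qnnp}(2) and choosing $L_1, L_2$ so that $L_1^{N-i}(x\otimes a)$ yields $d^{\{N-i\}}(x)\, a$ after multiplication; once this is done, the matching of coefficients is purely algebraic and reduces to Lemma \ref{qninv}.
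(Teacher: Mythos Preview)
Your proof is correct and follows essentially the same approach as the paper's. The paper likewise obtains the first identity by passing through the scalar-multiplication map $\rho$ (which commutes with $d$) and applying Lemma~\ref{homtennp}(2), and for the second it introduces operators $\phi,\psi$ on the tensor product (your $L_1,L_2$), applies Lemma~\ref{qnnp}(2) to invert the Leibniz expansion, and matches coefficients via Lemma~\ref{qninv} with $t=i$---exactly as you outline.
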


\begin{proof}
Let $\rho : (Q_rX)(A)\qten  \ma(B, A) \to (Q_rX)(B)$ in $C_{Ndg(k)}$ be
the scalar multiplication of a right $NDG$ $\ma$-module $Q_rX$. 
By Lemma \ref{homtennp},
\begin{align*}
d^{\{i\}}(xa)
=&d^{\{i\}}(\rho(x\otimes a))\\
=&\rho(d^{\{i\}}(x\otimes a))\\
=&\rho (\sum_{l=0}^{i}q^{ln}\begin{bmatrix} i \\ l \\ \end{bmatrix}d^{\{i-l\}}(x)\otimes d^{\{i\}}(a))\\
=&\rho (\sum_{l=0}^{i}d^{\{i-l\}}(x)\otimes a^n_{i-l+1\;i+1})\\
=&\sum_{l=0}^{i}d^{\{i-l\}}(x)a^n_{i-l+1\;i+1}\\
\end{align*}
Set $\phi (x\otimes a)=d(x)\otimes a$ and $\psi (x\otimes a)=q^nx\otimes d(a)$, then 
$\phi +\psi=d_{(Q_rX)(A)\qten  \ma(B, A)}$ and $\psi\phi=q\phi\psi$.
By Lemma \ref{qnnp} (2) and Lemma \ref{homtennp},
\begin{align*}
d^{\{N-i\}}(x)a
=&\rho(\phi^{N-i}(x\otimes a))\\
=&\rho( 
\sum_{l=0}^{N-i} (-1)^lq^{\frac{l(l-1)}{2}}\begin{bmatrix} N-i \\ l \\ \end{bmatrix}(\phi+\psi)^{N-i-l}\psi^l(x\otimes a))\\
=&\rho(
\sum_{l=0}^{N-i} (-1)^lq^{\frac{l(l-1)}{2}}\begin{bmatrix} N-i \\ l \\ \end{bmatrix}d^{\{N-i-l\}}(q^{ln}x\otimes d^{\{l\}}(a)))\\
=&\rho(
\sum_{l=0}^{N-i} \begin{bmatrix} l+i-1 \\ l \\ \end{bmatrix}d^{\{N-i-l\}}(q^{l(n-i)}x\otimes d^{\{l\}}(a)))\\
=&\sum_{l=0}^{N-i}d^{N-i-l}(xa^{n-i}_{i\;i+l}).
\end{align*}

\end{proof}

\begin{lemma}\label{rhoeta}
The following morphisms
\[
\pi_X : Q_{-r}U_rX \to X, \;\pi_{X(A)^m}(x)= (d^{\{N-1\}} \cdots \;\;d \;\;1)x, \;\; x \in Q_{-r}U_rX(A)^m
\]
and
\[
\eta_X : X \to Q_{-r+N-1}U_rX, \;\eta_{X(A)^m}(x)={}^t(1 \;\; d \;\; \cdots \;\; d^{\{N-1\}})x, \;\; x \in X(A)^m
\] 
lie in $\cat{C}_{Ndg}(\ma)$.
\end{lemma}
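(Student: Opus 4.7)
My plan is to verify the three conditions defining a morphism in $\cat{C}_{Ndg}(\ma)$: both maps are homogeneous of degree $0$ (immediate from the degree conventions in the definitions of $U_r$ and $Q_s$), both commute with the differentials, and both are compatible with the right $\ma$-action.

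Commutation with the differentials is a formal check using $d^{\{N\}}=0$. Writing $x\in X(A)^m$, the column $\eta_X(x)={}^t(x,d(x),\dots,d^{\{N-1\}}(x))$ satisfies $J\eta_X(x)={}^t(d(x),\dots,d^{\{N-1\}}(x),0)$, while $\eta_X(d(x))$ has $i$-th entry $d^{\{i\}}(x)$ with last entry $d^{\{N\}}(x)=0$; the two agree. Dually, for $x=(x_1,\dots,x_N)^t\in (Q_{-r}U_rX)(A)^m$, one computes $\pi_X(Jx)=\sum_{i=2}^{N}d^{\{N-i+1\}}(x_i)$ and $d\pi_X(x)=\sum_{i=1}^{N}d^{\{N-i+1\}}(x_i)$, and these agree because $d^{\{N\}}(x_1)=0$.

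For the $\ma$-linearity of $\eta_X$ the verification is clean. The $i$-th entry of $\eta_X(xa)$ is $d^{\{i-1\}}(xa)$, which by the $N$-Leibniz formula (Lemma \ref{Anp}) equals
\[
\sum_{l=0}^{i-1}q^{lm}\begin{bmatrix}i-1\\l\end{bmatrix}d^{\{i-1-l\}}(x)\,d^{\{l\}}(a);
\]
after substituting $l=i-j$ this is precisely $\sum_{j=1}^{i}d^{\{j-1\}}(x)\,a^m_{j,i}$, which by the definition of the matrix $(a^m_{st})_N$ is the $i$-th entry of $\eta_X(x)a$.

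The $\ma$-linearity of $\pi_X$ is the genuine obstacle. Expanding $\pi_X(xa)$ via Lemma \ref{Anp} and the explicit form of $a^n_{ij}$, then collecting the coefficient of each monomial $d^{\{N-i-t\}}(x_i)\,d^{\{t\}}(a)$ for $1\le i\le N-1$ and $0\le t\le N-i$, reduces the desired equality $\pi_X(xa)=\pi_X(x)a$ to the $q$-binomial identity
\[
\sum_{p=0}^{t}\begin{bmatrix}p+i-1\\p\end{bmatrix}q^{p(N-i)}\begin{bmatrix}N-i-p\\t-p\end{bmatrix}=
\begin{cases}1,& t=0,\\ 0,& 1\le t\le N-i,\end{cases}
\]
where the $t=0$ case accounts exactly for the summand $d^{\{N-i\}}(x_i)a$ appearing in $\pi_X(x)a$. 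To dispatch the vanishing for $t\ge 1$, I plan to use Lemma \ref{qninv} to rewrite $\begin{bmatrix}p+i-1\\p\end{bmatrix}=(-1)^p q^{pi+p(p-1)/2}\begin{bmatrix}N-i\\p\end{bmatrix}$, simplify $q^{pi}q^{p(N-i)}=q^{pN}=1$, factor $\begin{bmatrix}N-i\\p\end{bmatrix}\begin{bmatrix}N-i-p\\t-p\end{bmatrix}=\begin{bmatrix}N-i\\t\end{bmatrix}\begin{bmatrix}t\\p\end{bmatrix}$, and then invoke Lemma \ref{qn0} to conclude that the residual factor $\sum_{p=0}^{t}(-1)^p q^{p(p-1)/2}\begin{bmatrix}t\\p\end{bmatrix}$ vanishes. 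The boundary case $i=N$ is trivial since only $t=0$ is allowed, so $x_N a$ contributes directly.
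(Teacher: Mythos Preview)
Your proof is correct. The verification of the differential compatibility and the $\ma$-linearity of $\eta_X$ via the $N$-Leibniz formula match the paper's argument essentially verbatim (the paper cites Lemma~\ref{QXAnp}(1), which is just the Leibniz rule rewritten in matrix form).

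For $\pi_X$ you take a different route from the paper. The paper invokes the second identity of Lemma~\ref{QXAnp},
\[
d^{\{N-i\}}(x_i)a=\sum_{l=0}^{N-i}d^{\{N-i-l\}}(x_i\,a^{m}_{i\,i+l}),
\]
which is an ``inverse Leibniz'' formula obtained from Lemma~\ref{qnnp}(2) together with Lemma~\ref{qninv}; after summing over $i$ this immediately collapses to $\pi_X(xa)$. You instead expand $\pi_X(xa)$ forward with the ordinary Leibniz rule, collect the coefficient of each $d^{\{N-i-t\}}(x_i)\,d^{\{t\}}(a)$, and reduce to the $q$-identity
\[
\sum_{p=0}^{t}\begin{bmatrix}p+i-1\\p\end{bmatrix}q^{p(N-i)}\begin{bmatrix}N-i-p\\t-p\end{bmatrix}=\delta_{t,0},
\]
which you dispatch with Lemma~\ref{qninv}, the trinomial factorisation, and Lemma~\ref{qn0}. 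Both arguments rest on the same underlying $q$-identities; the paper's packaging via Lemma~\ref{QXAnp}(2) is more conceptual (one inversion of the Leibniz rule does the work), while your direct computation is self-contained and bypasses the auxiliary lemma entirely. Either organization is fine.
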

\begin{proof}
By a direct calculation, both morphisms lie in $\cat{C}_{Ndg}(k)$.
By Lemma \ref{QXAnp},
\begin{align*}
{}^t\eta_X(xa)&=(xa, d(xa), \cdots, d^{\{N-1\}}(xa)) \\
&=(x, d(x), \cdots, d^{\{N-1\}}(x))(a^m_{st})_N\\
&={}^t\eta_X(x)a,
\end{align*}
hence $\eta_X \in \cat{C}_{Ndg}(\ma)$.
For $x={}^t(x_1, \cdots, x_N) \in Q_{-r}U_rX(A)^m$,
by Lemma \ref{QXAnp},
\begin{align*}
\pi_X(x)a&=\Sigma_{i=1}^Nd^{\{N-i\}}(x_i)a \\
&=\Sigma_{i=1}^N\Sigma^{N-i}_{l=0} d^{\{N-i-l\}}(x_ia^{m}_{i\,i+l}) \\
&=\pi_X(xa),
\end{align*}
hence $\pi_X \in \cat{C}_{Ndg}(\ma)$.
\end{proof}

\begin{proposition}\label{QUQ}
For any integer $r$,
$Q_{-r}$ is the left adjoint of $U_r$,
and $U_r$ is the left adjoint of $Q_{-r+N-1}$.
\end{proposition}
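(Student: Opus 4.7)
The plan is to establish both adjunctions by giving explicit unit and counit natural transformations and verifying the triangle identities. For $Q_{-r} \dashv U_r$ the counit is the morphism $\pi_X : Q_{-r}U_rX \to X$ already constructed in Lemma~\ref{rhoeta}, and as unit I take the natural transformation $\iota_Y : Y \to U_rQ_{-r}Y$ in $\cat{Gr}^0(\ma)$ that sends $y \in Y(A)^n$ into the last ($i=N$) component of $U_rQ_{-r}Y(A)^n = \coprod_{i=1}^N Y(A)^{n-N+i}$. For $U_r \dashv Q_{-r+N-1}$ the unit is $\eta_X$ from Lemma~\ref{rhoeta}, and as counit I take $\xi_Y : U_rQ_{-r+N-1}Y \to Y$ given by projection onto the first ($i=1$) component of $U_rQ_{-r+N-1}Y(A)^n = \coprod_{i=1}^N Y(A)^{n-1+i}$.

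First I would check that $\iota_Y$ and $\xi_Y$ are morphisms in $\cat{Gr}^0(\ma)$, i.e.\ that they respect the right $\ma$-action. This is immediate from the upper-triangular form of the action matrix $(a^n_{st})_N$: its entries vanish for $s>t$ and $a^n_{ii}=a$ on the diagonal. Hence $(0,\dots,0,y)\cdot a$ has $j$-th entry $y\cdot a^n_{Nj}$, which vanishes unless $j=N$, in which case it equals $ya$; dually, the first entry of $(y_1,\dots,y_N)\cdot a$ is $\sum_j y_j\, a^n_{j1} = y_1 a$.

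Next I would verify the four triangle identities. For $Q_{-r}\dashv U_r$, the identity $U_r(\pi_X)\circ \iota_{U_rX} = \opn{id}$ reduces to $\pi_X(0,\dots,0,x) = d^{\{0\}}(x) = x$; the companion identity $\pi_{Q_{-r}Y}\circ Q_{-r}(\iota_Y) = \opn{id}$ uses that the differential on $Q_{-r}Y$ is left-multiplication by the nilpotent matrix $J$, so $J^{N-i}(0,\dots,0,y_i)$ places $y_i$ in the $i$-th slot, and summing over $i$ recovers $(y_1,\dots,y_N)$. For $U_r\dashv Q_{-r+N-1}$, the identity $\xi_{U_rX}\circ U_r(\eta_X) = \opn{id}$ is clear from $\eta_X(x) = (x,d(x),\dots,d^{\{N-1\}}(x))$; the identity $Q_{-r+N-1}(\xi_Y)\circ \eta_{Q_{-r+N-1}Y} = \opn{id}$ uses the same feature of $J$: for $y=(y_1,\dots,y_N)$ the tuple $\eta(y) = (y, Jy, \dots, J^{N-1}y)$ has first entries exactly $y_1,\dots,y_N$.

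No step should be seriously difficult; the substance is bookkeeping with the Jordan-block differential and the upper-triangular action matrix. The only point needing genuine care is the second triangle identity in each adjunction, where one must track how the powers of $J$ shift entries through the tuple representation of $Q_rY$, but this is essentially the content of the matrix computation already carried out in the proof of Lemma~\ref{QX}, so no new combinatorial identity is required.
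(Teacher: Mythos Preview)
Your proposal is correct and follows essentially the same approach as the paper: the paper's proof simply writes down the same four maps (there called $\pi_X$, $\xi_Y$, $\zeta_Y$, $\eta_X$, with $\pi_X$ and $\eta_X$ taken from Lemma~\ref{rhoeta}) and asserts that they yield the two adjunctions. You go a bit further by spelling out why the two maps in $\cat{Gr}^0(\ma)$ respect the $\ma$-action and by verifying the triangle identities explicitly, which the paper leaves implicit.
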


\begin{proof}
Let $X$ be a right NDG $\ma$-module and $Y$ a right graded $\ma$-module.
The morphisms
\[
\pi_X : Q_{-r}U_rX \to X, \;\pi_{X(A)^m}(x)= (d^{\{N-1\}} \cdots \;\;d \;\;1)x, \;\; x \in Q_{-r}U_rX(A)^m
\]
in $\cat{C}_{Ndg}(\ma)$ and
\[
\xi_Y : Y \to U_rQ_{-r}Y, \;\eta_{Y(A)^m}(y)={}^t(0 \;\; \cdots \;\; 0 \;\; 1)y, \;\; y \in Y(A)^m
\] 
in $\cat{Gr}^0(\ma)$ imply $Q_{-r}$ is the left adjoint of $U_r$, on the other hand,
the morphisms
\[
\zeta_Y : U_{r}Q_{-r+N-1}Y \to Y, \ \zeta_{Y(A)^m}(y)= (1 \;\; 0 \;\; \cdots \;\; 0)y, \;\; 
y \in U_rQ_{-r+N-1}Y(A)^m
\]
in $\cat{Gr}^0(\ma)$ and
\[
\eta_X : X \to Q_{-r+N-1}U_rX, \;\eta_{X(A)^m}(x)={}^t(1 \;\; d \;\; \cdots \;\; d^{\{N-1\}})x, \;\; x \in X(A)^m
\] 
in $\cat{C}_{Ndg}(\ma)$ imply that $U_r$ is the left adjoint of $Q_{-r+N-1}$.
\end{proof}


Let $\ma$ be an $N_qDG$ category, $\mcal{E}_{\ma}$
the collection of exact sequence $0\to X\to Y\to Z\to 0$ in $\cat{C}_{Ndg}(\ma)$
such that $0\to U_0X\to U_0Y\to U_0Z\to 0$ is a split exact sequence in $\cat{Gr}^0(\ma)$, 
equivalently
$0\to U_rX\to U_rY\to U_rZ\to 0$ is a split exact sequence in $\cat{Gr}^0(\ma)$ for any integer $r$.

\begin{corollary}
$( \cat{C}_{Ndg}(\ma), \mathcal{E}_{\ma})$ is a Frobenius category.
\end{corollary}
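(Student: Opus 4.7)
The strategy is to promote the two adjunctions of Proposition \ref{QUQ} into a supply of projective-injective objects together with functorial deflations and inflations, then conclude that the projectives and injectives coincide and that there are enough of each.

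First I would verify that $\mathcal{E}_{\ma}$ is an exact structure on the abelian category $\cat{C}_{Ndg}(\ma)$ (Remark \ref{bimod}). Since $U_0 : \cat{C}_{Ndg}(\ma) \to \cat{Gr}^0(\ma)$ is a faithful additive functor preserving kernels and cokernels, the class of short exact sequences whose image under $U_0$ is split satisfies the Quillen axioms (closure under isomorphism, stability under pushout and pullback along admissibles, composition) by a routine verification.

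Second, using Proposition \ref{QUQ}, for each $r \in \z$ I would check that $Q_{-r}W$ is projective and $Q_{-r+N-1}W$ is injective for every $W \in \cat{Gr}^0(\ma)$. Indeed, applying the adjunction $\Hom_{\cat{C}_{Ndg}(\ma)}(Q_{-r}W, -) \simeq \Hom_{\cat{Gr}^0(\ma)}(W, U_r(-))$ to a conflation in $\mathcal{E}_{\ma}$ produces a split short exact sequence of abelian groups; dually for $Q_{-r+N-1}W$. Since $-r$ and $-r+N-1$ each sweep all of $\z$ as $r$ varies, $Q_s W$ is both projective and injective for every $s \in \z$ and every graded $\ma$-module $W$.

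Third, I would produce enough projectives and enough injectives by specializing Lemma \ref{rhoeta} to $r=0$. The counit $\pi_X : Q_0 U_0 X \to X$ is a degreewise surjection since $\pi_X({}^t(0,\dots,0,x)) = x$, and the same formula defines a section in $\cat{Gr}^0(\ma)$, namely the unit $\xi_{U_0 X}$ of the adjunction $Q_0 \dashv U_0$ applied to $U_0 X$; hence the kernel sequence of $\pi_X$ lies in $\mathcal{E}_{\ma}$ and has projective-injective middle term. Dually, the unit $\eta_X : X \to Q_{N-1}U_0X$ is degreewise injective with retraction $\zeta_{U_0 X}$ in $\cat{Gr}^0(\ma)$, so its cokernel sequence lies in $\mathcal{E}_{\ma}$ with projective-injective middle term. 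Any projective $P$ is then a direct summand of $Q_0 U_0 P$, which is also injective, so every projective is injective; the reverse inclusion is symmetric, giving the equality of the two classes.

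The main obstacle, such as it is, is verifying that $\pi_X$ and $\eta_X$ yield conflations in $\mathcal{E}_{\ma}$; this reduces to exhibiting the graded section $\xi_{U_0 X}$ and retraction $\zeta_{U_0 X}$, whose $\ma$-linearity in $\cat{Gr}^0(\ma)$ is encoded in the matrix formulas for the scalar multiplication on $Q_r$ and was essentially checked in the proof of Proposition \ref{QUQ}. Everything else is a formal consequence of the two adjunctions, so the corollary is, in effect, a repackaging of Proposition \ref{QUQ}.
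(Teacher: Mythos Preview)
Your proposal is correct and follows essentially the same route as the paper: use the adjunctions of Proposition \ref{QUQ} to show that every $Q_sW$ is both $\mathcal{E}_{\ma}$-projective and $\mathcal{E}_{\ma}$-injective, then use the counit $\pi_X$ and unit $\eta_X$ as the functorial deflation and inflation, and finally read off that projectives and injectives coincide by passing to summands. The paper compresses the argument (it omits the verification that $\mathcal{E}_{\ma}$ is an exact structure and does not spell out that $\pi_X$ and $\eta_X$ are conflations via the graded sections $\xi$ and $\zeta$), but your more explicit account matches its logic step for step.
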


\begin{proof}
For any exact sequence $0\to X \to Y \to X \to 0$ in $\mathcal{E}_{\ma}$,
any right graded $\ma$-module $W$ and any integer $r$,
we have the following exact sequence
{\footnotesize
\[
0\to \Hom_{\cat{Gr}^0(\ma)}(U_{r}(Z), W) \to \Hom_{\cat{Gr}^0(\ma)}(U_{r}(Y), W) \to \Hom_{\cat{Gr}^0(\ma)}(U_{r}(X), W)\to 0.
\]}
By Proposition \ref{QUQ}, we have the following exact sequence
{\footnotesize
\[
0\to \Hom_{\cat{C}_{Ndg}(\ma)}(Z, Q_{-r}(W))\to \Hom_{\cat{C}_{Ndg}(\ma)}(Y, Q_{-r}(W))\to
\Hom_{\cat{C}_{Ndg}(\ma)}(X, Q_{-r}(W))\to 0.
\]}
Hence $Q_n(W)$ is injective in $\cat{C}_{Ndg}(\ma)$ for any integer $n$.
Dually one can show that $Q_n(W)$ is projective in $\cat{C}_{Ndg}(\ma)$ for any integer $n$.
For any projective object $X$ and injective object $Y$ of $\cat{C}_{Ndg}(\ma)$,
we have that $\rho_X : Q_{0}U_0X \to X$ is a split epimorphism and 
$\eta_Y : Y \to Q_{N-1}U_0Y$ is a split monomorphism.
Hence  any object of $\cat{C}_{Ndg}(\ma)$ is projective if and only if it is injective.
\end{proof}

\begin{theorem}\label{Kalgtricat}
The stable category of the Frobenius category $( \cat{C}_{Ndg}(\ma), \mathcal{E}_{\ma})$ is
the category $\cat{K}_{Ndg}(\ma)$. In particular, $\cat{K}_{Ndg(\ma)}$ is an algebraic triangulated category.
\end{theorem}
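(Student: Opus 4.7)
The plan is to identify the stable category $\underline{\cat{C}}_{Ndg}(\ma)$ of the Frobenius category $(\cat{C}_{Ndg}(\ma),\mathcal{E}_{\ma})$ from the preceding corollary with $\cat{K}_{Ndg}(\ma)$; the algebraic triangulated assertion then follows from Happel's general theorem that the stable category of every Frobenius exact category is canonically algebraic triangulated. Since the two categories share the same class of objects, the task reduces to matching morphism sets. Because $\opn{Z}^0_{(1)}\qhom_{\ma}(X,Y) = \Hom_{\cat{C}_{Ndg}(\ma)}(X,Y)$, it suffices to prove that the ideal
\[
I(X,Y) := \{F:X\to Y \mid F \text{ factors through a projective-injective object}\}
\]
coincides with $\opn{B}^0_{(N-1)}\qhom_{\ma}(X,Y)$.

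For the inclusion $\opn{B}^0_{(N-1)}\qhom_{\ma}(X,Y)\subseteq I(X,Y)$, I would take $F = d^{\{N-1\}}(S)$ with $S\in\Hom_{\ma}^{1-N}(X,Y)$ and construct an explicit factorization through the projective-injective $Q_{N-1}U_0 X$. Namely, define $\widetilde{S}:Q_{N-1}U_0 X\to Y$ by
\[
\widetilde{S}({}^t(x_1,\ldots,x_N)) = \sum_{i=1}^N d_Y^{\{N-i\}}S(x_i);
\]
this is precisely the image of $S$ under the adjunction $Q_{N-1}\dashv U_{1-N}$ of Proposition \ref{QUQ}. Differential compatibility of $\widetilde{S}$ uses $d_Y^{\{N\}}=0$ together with the Jordan form of $d_{Q_{N-1}U_0 X}$, and $\ma$-linearity follows from the matrix expression of the scalar action in terms of the coefficients $a^n_{st}$. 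Then Remark \ref{AbNdgA} yields
\[
\widetilde{S}\,\eta_X(x) = \sum_{l=0}^{N-1} d_Y^{\{N-l-1\}}S d_X^{\{l\}}(x) = d^{\{N-1\}}(S)(x) = F(x),
\]
so $F$ factors through $Q_{N-1}U_0 X$. For the reverse inclusion, if $F = \beta\alpha$ with $\alpha:X\to P$, $\beta:P\to Y$ and $P$ projective-injective, I would use that $\eta_X:X\to Q_{N-1}U_0 X$ is an admissible monomorphism (its $U_0$-image is split in $\cat{Gr}^0(\ma)$ by projection onto the first coordinate) with injective target; injectivity of $P$ then extends $\alpha$ along $\eta_X$ to some $\alpha':Q_{N-1}U_0 X\to P$, giving $F = G\eta_X$ with $G=\beta\alpha'$. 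Passing $G$ back across the same adjunction produces $S\in\Hom_{\ma}^{1-N}(X,Y)$, and the computation above runs in reverse to yield $F = d^{\{N-1\}}(S)\in\opn{B}^0_{(N-1)}\qhom_{\ma}(X,Y)$.

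The main technical obstacle in both directions is verifying that the adjoint morphism $\widetilde{S}$ genuinely lies in $\cat{C}_{Ndg}(\ma)$ rather than merely in $\cat{Gr}^0(\ma)$: the differential-compatibility hinges on the telescoping produced by $J^N=0$ combined with $d_Y^{\{N\}}=0$, and the $\ma$-linearity on the matrix identities exploited in the proof of Lemma \ref{QX}. Once $I(X,Y)$ is identified with $\opn{B}^0_{(N-1)}\qhom_{\ma}(X,Y)$, the morphism-groups in $\underline{\cat{C}}_{Ndg}(\ma)$ and in $\cat{K}_{Ndg}(\ma)$ agree tautologically, composition is preserved, and Happel's theorem supplies the algebraic triangulated structure on $\cat{K}_{Ndg}(\ma)$.
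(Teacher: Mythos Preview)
Your proposal is correct and follows essentially the same route as the paper: identify the ideal of morphisms factoring through projective-injectives with $\opn{B}^0_{(N-1)}\qhom_{\ma}(X,Y)$ by factoring through $\eta_X$ and using the adjunction of Proposition~\ref{QUQ}. The one simplification the paper exploits that you miss is that your ``main technical obstacle''---checking $\widetilde{S}\in\cat{C}_{Ndg}(\ma)$---evaporates once you recognise $\widetilde{S}=\pi_Y\circ Q_{N-1}(S)$, which lies in $\cat{C}_{Ndg}(\ma)$ automatically by Lemma~\ref{rhoeta} and the functoriality of $Q_{N-1}$.
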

\begin{proof}
It is enough to show that $F \in B^0_{N-1}\Hom_{\ma}^{\bullet q}(X, Y)$ if and only if
$F$ factors through the $\eta_X$ in $\cat{C}_{Ndg}(\ma)$.
For any $F \in B^0_{N-1}\Hom_{\ma}^{\bullet q}(X, Y)$,
$F=d^{\{N-1\}}(H)$ for some $S \in \Hom_{\ma}^{1-N}(X, Y)$.
By Remark \ref{AbNdgA},
\begin{align*}
d^{\{N-1\}}(S) =&\sum_{l=0}^{N-1}d^{\{N-l-1\}}\circ S \circ d^{\{l\}} \\
=&\pi_Y Q_{N-1}(S)\eta_X.
\end{align*}
Hence $F$ factors through the $\eta_X$.

For any $F \in \Hom_{\cat{C}_{Ndg}(\ma)}(X, Y)$,
we assume that $F$ factors through $\eta_X$, namely,
$F=\sum_{l=0}^{N-1}F_l\circ d^{\{l\}}$ for some $(F_0, \cdots , F_{N-1}) \in \Hom_{\cat{C}_{Ndg}(\ma)}(Q_{N-1}U_0(X), Y)$.
By the adjointness $\Hom_{\cat{C}_{Ndg}(\ma)}(Q_{N-1}U_0(X), Y)\simeq \Hom_{\cat{Gr}^0(\ma)}(U_0(X), U_{1-N}(Y))$,
we have that $(F_0, \cdots , F_{N-1})=\zeta_YQ_{N-1}(F')$ for some $F' \in \Hom_{\ma}^{1-N}(X, Y)$.
Hence $F=\zeta_Y Q_{N-1}(F')\eta_X=d^{\{N-1\}}(F')$.
\end{proof}

In the rest of this subsection, we study the shift functor and the suspension functor on $\cat{K}_{Ndg(\ma)}$.
We define the shift functor $\theta_q : \cat{C}_{Ndg}(\ma) \to \cat{C}_{Ndg}(\ma)$ by 
\[ \theta _q(X)(A)^m=X(A)^{m+1}, \]
\[d_{\theta _q(X)(A)}=q^{-1}d_{X(A)} \]
for any right NDG $\ma$-module $(X, d)$ and any object $A$ of $\ma$.
One can check that $d_{\theta _q(X)(B)}(xf)=d_{\theta _q(X)(A)}(x)f+q^mxd(f)$
for any $x \in \theta _q(X)(A)^m$ and $f \in \ma(B, A)$.

We define functors $\Sigma ~(~\mbox{resp.}~ \Sigma^{-1}) : \cat{C}_{Ndg}(\ma) \to \cat{C}_{Ndg}(\ma)$ by
\[
(\Sigma X)^m = \coprod _{i=m+1}^{m+N-1} X^i ~(\mbox{resp.}~(\Sigma^{-1} X)^m = \coprod _{i=m-N+1}^{m-1} X^i),
\]
\[
xa={}^t({}^tx(a^m_{st})_{N-1})\]
and
\[
d_{\Sigma X}^m ~(\mbox{resp.}~d_{\Sigma^{-1} X}^m)= \left( \begin{array}{c|ccccc}
0&1&0&0&\cdots&0\\
&0&\ddots&\ddots&\ddots&\vdots\\
\vdots&\vdots&\ddots&\ddots&\ddots&0\\
&&&\ddots&\ddots&0\\
0&0&\cdots&\cdots&0&1\\
\hline
-d^{\{N-1\}}&-d^{\{N-2\}}&\cdots&\cdots&\cdots&-d\\
\end{array} \right) \] 
for any right NDG $\ma$-module $(X, d)$, 
$x \in \Sigma X(A)^m~(resp.~x \in \Sigma^{-1} X(A)^m)$ and $a \in \ma(B, A)$.

\begin{lemma}\label{Sigmaex}
We have the following short exact sequences which belong to $\mcal{E}_{\ma} $ in $\cat{C}_{Ndg}(\ma)$:
\[\begin{CD}
0 @>>>   \Sigma ^{-1} X @>{\epsilon _X}>>  {Q_0U_0X}  @>{\pi_X}>>  {X} @>>>  0 \\
0 @>>>  X @>{\eta _X}>>  Q_{N-1}U_0X @>{\delta_X}>> \Sigma X  @>>>  0
\end{CD}\]
where
{\footnotesize 
\[ (\epsilon _X )^m = \left( \begin{array}{cccc}
1&0&\cdots&0\\
0&\ddots&\ddots&\vdots\\
\vdots&\ddots&\ddots&0\\
0&\cdots&0&1\\
-d^{\{N-1\}}&\ldots&\ldots&-d \\
\end{array} \right), \quad  
(\pi_X )^m = \left ( \begin{array}{cccc}
d^{\{N-1\}} & \cdots & d& 1 
\end{array} \right)  , \] 
\[ (\eta _X )^m  = \left ( \begin{array}{c}
1\\ d\\ \vdots\\ d^{\{N-1\}}\\
\end{array} \right)  ~\mbox{and}~
(\delta_X )^m = \left( \begin{array}{ccccc}
-d&1&0&\cdots&0\\
0&\ddots&\ddots&\ddots&\vdots\\
\vdots&\ddots&\ddots&\ddots&0\\
0&\ldots&0&-d&1 \\
\end{array} \right) . \]
}
\end{lemma}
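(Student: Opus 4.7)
The plan is as follows. Lemma \ref{rhoeta} already verifies that $\pi_X$ and $\eta_X$ belong to $\cat{C}_{Ndg}(\ma)$, so the first task is to show that $\epsilon_X$ and $\delta_X$ are morphisms there as well, which amounts to checking compatibility with both the differential and the right $\ma$-action. For the differential compatibility of $\epsilon_X$, one compares $d_{Q_0U_0X}\epsilon_X$ with $\epsilon_X d_{\Sigma^{-1}X}$: the top $N-1$ entries agree because both sides produce the super-diagonal shift of the input, while the bottom entry of $d_{Q_0U_0X}\epsilon_X(x)$ vanishes (the last row of $J$ is zero) and the bottom entry of $\epsilon_X d_{\Sigma^{-1}X}(x)$ telescopes to $d^{\{N\}}(x_1)=0$. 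The differential check for $\delta_X$ is analogous and slightly simpler. For the $\ma$-action compatibility, the top $N-1$ rows match at once since $(a^m_{st})_{N-1}$ coincides with the upper-left $(N-1)\times(N-1)$ block of $(a^m_{st})_N$; the bottom-row identity
\[
-\sum_{j=1}^{N-1} d^{\{N-j\}}\bigl((xa)_j\bigr) = \sum_{s=1}^{N-1} x_s \cdot a^m_{s,N} - \biggl(\sum_{j=1}^{N-1} d^{\{N-j\}}(x_j)\biggr) \cdot a
\]
follows by expanding the left side with the right-module Leibniz rule (the analogue of Lemma \ref{Anp} for the scalar multiplication $\rho_X$) and collapsing using the Pascal-type identity of Lemma \ref{qn}.

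Once $\epsilon_X$ and $\delta_X$ are recognised as morphisms in $\cat{C}_{Ndg}(\ma)$, the short exactness of the two sequences is immediate. The compositions $\pi_X\epsilon_X$ and $\delta_X\eta_X$ vanish by direct expansion. The maps $\epsilon_X$ and $\eta_X$ are monomorphisms since each contains an identity sub-block, while $\pi_X$ (whose last entry is $1$) and $\delta_X$ (whose super-diagonal consists of identities) are epimorphisms. The identification $\Ker\pi_X = \operatorname{Im}\epsilon_X$ follows because every tuple $(x_1,\ldots,x_N)$ with $\sum_{i=1}^N d^{\{N-i\}}(x_i) = 0$ satisfies $x_N = -\sum_{i=1}^{N-1} d^{\{N-i\}}(x_i)$, which is exactly $\epsilon_X(x_1,\ldots,x_{N-1})$; dually, $\delta_X(x) = 0$ forces $x_{i+1} = d(x_i) = \cdots = d^{\{i\}}(x_1)$, identifying $\Ker\delta_X$ with $\operatorname{Im}\eta_X$.

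Finally, both sequences belong to $\mathcal{E}_\ma$ because they split in $\cat{Gr}^0(\ma)$ after $U_0$. For the first sequence, the projection onto the first $N-1$ coordinates $Q_0U_0X \to \Sigma^{-1}X$ is a graded $\ma$-module retraction of $\epsilon_X$ (again using that $(a^m_{st})_{N-1}$ is the upper-left block of $(a^m_{st})_N$). For the second sequence, projection onto the first coordinate $Q_{N-1}U_0X \to X$ is a graded $\ma$-module retraction of $\eta_X$, which is valid because $a^m_{s,1} = 0$ for $s > 1$ while $a^m_{1,1} = a$. The only step that requires genuine computation is the $\ma$-action compatibility for $\epsilon_X$ in its bottom row; all other verifications are routine matrix manipulations built on the shift pattern of $J$ and the nilpotency $d^{\{N\}} = 0$.
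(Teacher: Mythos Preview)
Your proposal is correct and follows essentially the same route as the paper: both invoke Lemma \ref{rhoeta} for $\pi_X,\eta_X$, then verify $\epsilon_X,\delta_X$ are $\ma$-linear chain maps, with the only substantive computation being the bottom-row identity for $\epsilon_X$; the paper packages that step via Lemma \ref{QXAnp} while you unfold it directly with the Leibniz rule and Lemma \ref{qn}, which amounts to the same thing. One minor caveat: your blanket claim that the remaining checks are ``routine matrix manipulations built on the shift pattern of $J$ and the nilpotency $d^{\{N\}}=0$'' undersells the $\ma$-linearity of $\delta_X$, which (as in the paper's explicit computation) also needs one application of the Pascal identity from Lemma \ref{qn}; on the other hand, you are more explicit than the paper in exhibiting the $\cat{Gr}^0(\ma)$ splittings that place the sequences in $\mcal{E}_\ma$.
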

\begin{proof}
By a direct calculation, the short exact sequences lie in $\cat{C}_{Ndg}(k)$.
By Lemma \ref{rhoeta}, $\pi_X$ and $\eta_X$ are in $\cat{C}_{Ndg}(\ma)$.
For $x={}^t(0, \cdots,0 , x_i, 0, \cdots, 0)  \in (\Sigma X)^m\;\; (x_i \in X^{m+i-1})$ and $a \in \ma$,
the $t$-entry 
\begin{align*}
\delta_X(xa)_t
=&-d(x_ia^m_{i\,t})+x_ia^m_{i\,t+1}\\
=&-d(x_i)a^m_{i\,t}-q^{m+i-1}x_id(a^m_{i\,t})+x_ia^m_{i\,t+1}\\
=&-d(x_i)a^m_{i\,t}+x_ia^m_{i-1\,t}\\
=&(\delta_X(x)a)_t,
\end{align*}
therefore $\delta_X(xa)=\delta_X(x)a$.

For $x={}^t(0, \cdots,0 , x_i, 0, \cdots, 0)  \in (\Sigma^{-1} X)^m\;\; (x_i \in X^{m-(N-i)})$ and $a \in \ma$,
the $t$-entry 
\begin{align*}
(\epsilon_X(x)a)_t=&x_ia^m_ {i\;t}-d^{N-1}(x_i)a^m_{N\;t},\\
(\epsilon_X(xa))_t=&x_ia^m_{i\;t}~(t \ne N)~\mbox{and} -\sum _{l=1}^{N-1}d^{N-l}(x_ia^m_{i\;l})~(t=N).
\end{align*}
By Lemma \ref{QXAnp}, $\epsilon_X(x)a=\epsilon_X(xa)$.
\end{proof}

The shift functor $\theta_q : \cat{C}_{Ndg}(\ma) \to \cat{C}_{Ndg}(\ma)$ induces 
the shift functor $\theta_q : \cat{K}_{Ndg}(\ma) \to \cat{K}_{Ndg}(\ma)$ which is a triangle functor.
Moreover, $\Sigma$ and $\Sigma ^{-1}$ induces the suspension functor and its quasi-inverse of 
the triangulated category $\cat{K}_{Ndg}(\ma)$.
Then we have the following observation.

\begin{theorem}\label{Sigmatheta}
We have that
\[
\Sigma \simeq \Sigma^{-1} \theta^N_q \simeq \theta^N_q \Sigma^{-1} ~\mbox{in}~ \cat{C}_{Ndg}(\ma),
\]
especially,
\[
\Sigma ^2 \simeq \theta^N_q ~\mbox{in}~ \cat{K}_{Ndg}(\ma).
\]
\end{theorem}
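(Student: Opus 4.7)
The plan is to establish the chain-level isomorphisms $\Sigma\simeq\theta_q^N\Sigma^{-1}$ and $\Sigma\simeq\Sigma^{-1}\theta_q^N$ in $\cat{C}_{Ndg}(\ma)$ by direct inspection, and then to deduce $\Sigma^2\simeq\theta_q^N$ in $\cat{K}_{Ndg}(\ma)$ as a formal consequence of the fact (Theorem \ref{Kalgtricat}) that $\Sigma$ and $\Sigma^{-1}$ become mutually quasi-inverse once one passes to the stable category.

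For the first isomorphism, I would observe that in degree $m$
\[
(\Sigma X)^m = \coprod_{i=m+1}^{m+N-1}X^i \quad\text{and}\quad (\theta_q^N\Sigma^{-1}X)^m = (\Sigma^{-1}X)^{m+N} = \coprod_{i=m+1}^{m+N-1}X^i,
\]
so the two objects coincide as underlying graded $k$-modules component by component. The candidate isomorphism $\Phi_X$ is then the identity on pieces, and the work reduces to checking compatibility with the differentials and with the right $\ma$-action. On the differential side, $\theta_q^N$ rescales $d$ by $q^{-N}=1$, so the matrix for $d$ on $\theta_q^N\Sigma^{-1}X$ in degree $m$ is literally the matrix for $d_{\Sigma^{-1}X}$ in degree $m+N$; the defining matrix has entries $-d^{\{N-\ell\}}$ in the bottom row and $1$'s on the superdiagonal, none of which depend on $m$, so this matrix agrees with that of $d_{\Sigma X}$ in degree $m$. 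On the action side, the matrix entries
\[
a^n_{st}=\begin{bmatrix}t-1\\ s-1\end{bmatrix}q^{n(t-s)}d^{\{t-s\}}(a)
\]
depend on $n$ only through $q^{n(t-s)}$, and since $q^{(m+N)(t-s)}=q^{m(t-s)}$ we obtain $a^{m+N}_{st}=a^m_{st}$; hence the two actions coincide. The parallel isomorphism $\Sigma\simeq\Sigma^{-1}\theta_q^N$ is obtained by an entirely analogous computation, using $(\Sigma^{-1}\theta_q^NX)^m=\coprod_{j=m+1}^{m+N-1}X^j$.

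Passing to the homotopy category, where Theorem \ref{Kalgtricat} guarantees $\Sigma^{-1}\Sigma\simeq\mrm{id}$, the chain-level isomorphism $\Sigma\simeq\theta_q^N\Sigma^{-1}$ composed on the right with $\Sigma$ gives
\[
\Sigma^2 \simeq \theta_q^N\Sigma^{-1}\Sigma \simeq \theta_q^N \quad\text{in}\quad \cat{K}_{Ndg}(\ma),
\]
as required. The only real work sits in the matrix bookkeeping of the middle paragraph, and this becomes routine once $q^N=1$ is used to trivialise the scalar factor in the exponent; I do not anticipate any essential obstacle beyond careful index tracking.
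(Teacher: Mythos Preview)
Your proposal is correct and follows essentially the same route as the paper: both arguments amount to observing that the explicit formulas for $\Sigma X$ and $\Sigma^{-1}X$ (given just before Lemma~\ref{Sigmaex}) yield $\Sigma X=\Sigma^{-1}\theta_q^{N}X$ on the nose in $\cat{C}_{Ndg}(\ma)$, after which the statement in $\cat{K}_{Ndg}(\ma)$ is immediate. The paper compresses this into a single line, but your index-by-index verification (graded pieces, differential matrix, action matrix via $q^{N}=1$) is exactly the content underlying that line; in fact for the order $\Sigma^{-1}\theta_q^N$ the action matrices are literally $(a^m_{st})_{N-1}$ on both sides without even invoking $q^N=1$, so the two functors are equal rather than merely isomorphic.
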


\begin{proof}
In Lemma \ref{Sigmaex}, we have
$\Sigma X=\Sigma^{-1}\theta_q^{N}X$.
\end{proof}

We remark that 
$\Sigma \simeq \Sigma^{-1} \theta^N_q \simeq \theta^N_q \Sigma^{-1}$
and $\Sigma^{-1}$ is not an inverse functor of $\Sigma$ on $\cat{C}_{Ndg(\ma)}$.


\subsection{Bimodules and triangle functors}\label{NDGbimod}
In this subsection,
we give the definition of bimodules over $NDG$ categories and show the adjointness between 
the tensor functor and the hom functor.

\begin{definition}\label{NDG-bimodule}
Let $\ma$ and $\mb$ be $N_qDG$ categories.
An $NDG$ $\mb$-$\ma$-bimodule $M$ is defined by
\begin{enumerate}
\item $(M(A, -), d_{M(A, -)}; \rho_M)$ is a left $NDG$ $\mb$-module for any $A \in \ma$,
\item $(M(-, B), d_{M(-, B)}; \lambda_M)$ is a right $NDG$ $\ma$-module for any $B \in \mb$,
\item we have the following commutative diagram in $\cat{C}_{Ndg}(k)$:
\[\xymatrix{
\mb\qten M\qten \ma \ar[r]^{\ 1\ten\rho_M} \ar[d]^{\lambda_M\ten 1} & \mb\qten M \ar[d]^{\lambda_M} \\
M\qten \ma \ar[r]^{\rho_M} & M .
 }\]
\end{enumerate}
Namely, we have a collection of NDG $k$-modules $(M(A, B), d_{M(A, B)}; \lambda_M, \rho_M)$
for any $A \in \ma$ and $B \in \mb$ satisfying that 
$(M(A, -), d_{M(A, -)}; \rho_M)$ is a left $NDG$ $\mb$-module for $A \in \ma$,
$(M(-, B), d_{M(-, B)}; \lambda_M)$ is a right $NDG$ $\ma$-module for $B \in \mb$ and
$f(mg)=(fm)g$ for $f \in \mb(B_1, B_2)$, $m \in M(A_1, B_1)$ and $g \in \ma(A_2, A_1)$.
One can check that $d(f(mg))=d((fm)g)$.
\end{definition}

\begin{example}
Let $\ma$ be an $N_qDG$ category. Then $\ma$ is an $NDG$ $\ma$-$\ma$-bimodule.
We take $k$ as an $N_qDG$ category with one object $*$ and the morphism set $k(*, *)=k^0(*, *)=k$.
Any right (resp. left) $NDG$ $\ma$-module $X$ is an $NDG$ $k$-$\ma$-bimodule 
(resp. $NDG$ $\ma$-$k$-bimodule).
\end{example}

Let $\ma$ and $\mb$ be small $N_qDG$ categories,
$X$ a right $NDG$ $\mb$-module,
$Y$ a right $NDG$ right $\ma$-module and
$M$ an $NDG$ $\mb$-$\ma$-bimodule.
We define a right $NDG$ $\mb$-module $\Hom_{\ma}^{\bullet q}(M, Y)$ by
\[
\Hom_{\ma}^{\bullet q}(M, Y)(B)=\Hom_{\ma}^{\bullet q}(M(-, B), Y)
\]
and
\[
\rho : \Hom_{\ma}^{\bullet q}(M, Y)(B_1)\qten \mb(B_2, B_1) \to \Hom_{\ma}^{\bullet q}(M, Y)(B_2)
\] 
\[\rho(F\otimes b)_A(m)=F_A(bm)
\]
for any $B$, $B_1$ and $B_2 \in \mb$, $m \in M(A, B_2)$ and $A \in \ma$.
One can check that $\rho$ is in $C_{Ndg}(k)$.
We define a right $NDG$ $\ma$-module $X\qten_{\mb}M$ by
\[
(X\qten_{\mb}M)(A)={\rm Cok}\,\nu_A
\] 
in $C_{Ndg}(k)$ for any $A$ in $\ma$
where
\[
\nu_A : \coprod_{B_1, B_2 \in \mb}X(B_2)\qten \mb (B_1, B_2)\qten M(A, B_1)
\to \coprod_{B_3 \in \mb}X(B_3)\qten M(A, B_3),
\]
\[
\nu_A(x\otimes b\otimes m)=xb\otimes m-x\otimes bm
\]
and
\[
\rho : X(B)\qten_{\mb}M(A_1, B)\qten \ma(A_2, A_1) \to X(B)\qten_{\mb}M(A_2, B)\] 
\[
\rho(x \otimes m \otimes a)=x \otimes ma
\]
for any $A_1$, $A_2 \in \ma$ and $B \in \mb$.
One can check that $\mu_A$ and $\rho$ are in $C_{Ndg}(k)$.
Similarly, a left $NDG$ $\ma$-module $M\qten_{\mb}Y$
is defined for any left $NDG$ $\mb$-module $Y$.

It is easy to see the following Remark and Lemmas.

\begin{remark}\label{exhomtenNdgA}
$\qhom_{\ma}(M,-): \cat{C}_{Ndg}(\ma) \to \cat{C}_{Ndg}(\mb)$ (resp., $\Hom_{\cat{Gr}(\ma)}(M,-): \cat{Gr}^0(\ma) \to \cat{Gr}^0(\mb)$) is left exact
between abelian categories in the sense of Remark \ref{AbNdgA}.
And $-\qten_{\mb}M : \cat{C}_{Ndg}(\mb) \to \cat{C}_{Ndg}(\ma)$ (resp., $(-\qten_{\mb}M)\mid_{\cat{Gr}^0(\mb)} : \cat{Gr}^0(\mb) \to \cat{Gr}^0(\ma)$) is right exact between abelian categories, where $(-\qten_{\mb}M)\mid_{\cat{Gr}^0(\mb)}$ is the restriction of $-\qten_{\mb}M$ to $\cat{Gr}^0(\mb)$.
\end{remark}


\begin{lemma}\label{qtenthate}
Let $\ma$ and $\mb$ be small $N_qDG$ categories,
$X$ a right $NDG$ $\mb$-module, and $M$ an $NDG$ $\mb$-$\ma$-bimodule.
For any $n\in \z$, we have the canonical isomorphism in $\cat{C}_{Ndg}(\ma)$:
\[
(\theta_qX)\qten_{\mb}M_{\ma} \simeq \theta_q(X\qten_{\mb}M_{\ma})
\] 

\end{lemma}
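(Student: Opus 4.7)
The plan is to exhibit the canonical isomorphism as the identity on representatives and then verify that this identity descends to the cokernels defining $\qten_{\mb}$, intertwines the two $N$-differentials, and respects the right $\ma$-action. The whole argument is short bookkeeping with $q$-powers; there is no substantive obstacle.

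First, compare underlying graded structures. By the definition of $\theta_q$ in Subsection \ref{shiftsuspension}, $(\theta_qX)(B)^s = X(B)^{s+1}$ with the \emph{same} left $\mb$-scalar multiplication formula as on $X$ (only the grading is shifted and the differential is scaled by $q^{-1}$). Hence the summand $\coprod_{B\in\mb} (\theta_qX)(B)\qten M(A,B)$ in degree $r$ is canonically identified with $\coprod_{B\in\mb} X(B)\qten M(A,B)$ in degree $r+1$, and the defining relation $xb\otimes m - x\otimes bm$ is literally the same in both constructions. Consequently the identity on representatives descends to an isomorphism of graded $\ma$-modules $(\theta_qX)\qten_{\mb}M \to \theta_q(X\qten_{\mb}M)$, and the right $\ma$-action $(x\otimes m)a = x\otimes(ma)$ is preserved on the nose.

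Next, check the differentials agree. Fix $x\in(\theta_qX)(B)^s = X(B)^{s+1}$ and $m\in M(A,B)^t$ with $s+t = r$. On the left-hand side, using $d_{\theta_qX} = q^{-1}d_X$,
\[
d_{(\theta_qX)\qten_{\mb}M}(x\otimes m) = d_{\theta_qX}(x)\otimes m + q^{s}\, x\otimes d_M(m) = q^{-1}d_X(x)\otimes m + q^{s}\, x\otimes d_M(m).
\]
On the right-hand side, the same element has $X$-degree $s+1$ when viewed inside $X\qten_{\mb}M$ of total degree $r+1$, so
\[
d_{\theta_q(X\qten_{\mb}M)}(x\otimes m) = q^{-1}\bigl(d_X(x)\otimes m + q^{s+1}\, x\otimes d_M(m)\bigr) = q^{-1}d_X(x)\otimes m + q^{s}\, x\otimes d_M(m).
\]
The two expressions coincide, so the identity intertwines the $N$-differentials.

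The only conceptual point worth recording is that the factor $q^{-1}$ that $\theta_q$ introduces on the differential cancels precisely the extra factor of $q$ that would otherwise appear in the Leibniz rule, because the $\theta_qX$-degree $s$ of $x$ is one less than its $X$-degree $s+1$. Naturality in $X$ is immediate since the isomorphism is the identity on representatives, giving the asserted canonical isomorphism in $\cat{C}_{Ndg}(\ma)$.
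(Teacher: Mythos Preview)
Your proof is correct. The paper omits the proof of this lemma entirely, declaring it (together with the preceding Remark) ``easy to see,'' so your argument simply supplies the natural bookkeeping the authors skipped: the identity on representatives passes to the cokernel defining $\qten_{\mb}$, and the single nontrivial check---that the $q^{-1}$ scaling on the differential in $\theta_q$ exactly compensates for the degree shift in the Leibniz rule---is carried out correctly.
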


\begin{lemma}\label{tenM(B)}
Let $A$ (resp., $B$) be an object of $\ma$ (resp., $\mb$), $M$ an $NDG$ $\mb$-$\ma$-bimodule.
Then the following hold.
\begin{enumerate}
\item
$B\,\hat{}\qten_{\mb}M \simeq M(-, B)$ as right $NDG$ $\ma$-modules. 
\item
$M\qten_{\ma}\hat{}A \simeq M(A, -)$  as left $NDG$ $\mb$-modules. 
\end{enumerate}
\end{lemma}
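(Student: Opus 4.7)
The plan is to construct explicit mutually inverse morphisms and verify the necessary compatibilities. For part (1), I would define
\[
\phi : B\,\hat{}\qten_{\mb}M \to M(-,B), \qquad \phi_A(c\otimes m) = cm,
\]
for $c \in B\,\hat{}(B_1) = \mb(B_1,B)$ and $m \in M(A,B_1)$, and the map in the opposite direction
\[
\psi : M(-,B) \to B\,\hat{}\qten_{\mb}M, \qquad \psi_A(m) = 1_B \otimes m,
\]
for $m \in M(A,B)$.

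Next I would check that $\phi$ is well defined on the cokernel defining $\qten_{\mb}$. This amounts to observing that for $c \in \mb(B_1,B)$, $b \in \mb(B_2,B_1)$, and $m \in M(A,B_2)$, one has $\phi_A(\nu_A(c\otimes b \otimes m)) = (cb)m - c(bm) = 0$, which is exactly the bimodule associativity in Definition \ref{NDG-bimodule}. Then I would verify that $\phi$ respects the right $\ma$-scalar multiplication, using that the right $\ma$-action on $B\,\hat{}\qten_{\mb}M$ is given by $\rho(c\otimes m \otimes a)=c\otimes ma$ and that $c(ma)=(cm)a$ by the bimodule axiom. Compatibility with the $N$-differential follows from
\[
\phi_A(d(c\otimes m)) = \phi_A(d(c)\otimes m + q^{r}c \otimes d(m)) = d(c)m + q^{r}c\, d(m) = d(cm),
\]
where $r=\deg c$ and the last equality is the compatibility formula displayed after Definition \ref{def:NDG}. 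A symmetric (and easier) argument shows that $\psi$ is a morphism of right $NDG$ $\ma$-modules; here one needs $d(1_B)=0$, which follows from differentiating $1_B = 1_B \cdot 1_B$.

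Finally I would show these are mutually inverse. Clearly $\phi_A(\psi_A(m)) = 1_B\, m = m$. For the other direction, the cokernel relation $xb\otimes m = x\otimes bm$, applied with $x=1_B$ and $b=c$, gives
\[
\psi_A(\phi_A(c\otimes m)) = 1_B \otimes cm = (1_B\cdot c)\otimes m = c\otimes m,
\]
completing the proof of (1). Part (2) is strictly analogous, with the roles of left and right swapped: define $\phi'_B(m\otimes a) = ma$ and $\psi'_B(m) = m\otimes 1_A$, and repeat the above checks using left-module versions of the same identities. There is no real obstacle here—the work is entirely bookkeeping—but the one place to take care is the differential compatibility, since the Leibniz rule has the extra scalar $q^{r}$; this is why the construction lives naturally in $\cat{C}_{Ndg}$ rather than merely in $\cat{Gr}^0$.
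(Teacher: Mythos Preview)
Your argument is correct. The explicit inverse $\psi_A(m)=1_B\otimes m$ together with the tensor relation $c\otimes m = 1_B\otimes cm$ in the cokernel is exactly the standard Yoneda-type computation, and all the compatibilities you list (well-definedness on $\Cok\nu$, right $\ma$-linearity, differential) go through as written; the check that $d(1_B)=0$ follows from the Leibniz rule applied to $1_B=1_B\cdot 1_B$ in degree $0$.

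The paper takes a different and less direct route: it only writes down the map $\pi$ (your $\phi$) as the composite $\pi=\alpha\circ\opn{cok}(\nu)$, and then, rather than exhibiting an inverse, argues that the sequence $\coprod \mb\qten\mb\qten M \xrightarrow{\nu} \coprod \mb\qten M \xrightarrow{\pi} M(-,B)\to 0$ is exact by reducing to finite subsets of objects, reinterpreting the finite case as $e_1\Lambda\otimes_\Lambda \tilde M$ for a matrix ring $\Lambda$, and then passing to the filtered colimit. This produces a second epimorphism $\beta$ with $\opn{cok}(\nu)=\beta\circ\pi$, whence $\alpha$ and $\beta$ are mutually inverse. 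Your approach is shorter and more transparent, and it makes the compatibility with the $N$-differential visible in a single line; the paper's approach avoids writing down $\psi$ but at the cost of the matrix-ring and colimit bookkeeping, and does not seem to buy any extra generality here.
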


\begin{proof}
(1)
It is easy to see that
{\small
\[
\coprod_{B_1, B_2 \in \mb}\mb(B_2,B)\qten\mb(B_1,B_2)\qten M(-, B_1) 
\xarr{\nu}
\coprod_{B_1 \in \mb}\mb(B_1,B)\qten M(-, B_1) 
\xarr{\pi}
M(-, B)
\]
}
is a composition of morphisms in $\cat{C}_{Ndg}(\ma)$, where $\nu=1\ten \lambda_M -\mu_{\mb}\ten 1$, $\pi=\Sigma \lambda^{B_1}_{B}$ 
with $\lambda^{B_1}_{B}:\mb(B_1,B)\qten M(-, B_1)\to M(-, B)$.
Then it suffices to show that the following sequence is exact in $\cat{C}_{Ndg}(k)$:
{\small
\[
\coprod_{B_1, B_2 \in \mb}\mb(B_2,B)\qten\mb(B_1,B_2)\qten M(A, B_1) 
\xarr{\nu}
\coprod_{B_1 \in \mb}\mb(B_1,B)\qten M(A, B_1) 
\xarr{\pi}
M(A, B)
\to
0
\]
}
for any $A \in \ma$.
It is trivial that $\pi=\Sigma \lambda^{B_1}_{B}$ is an epimorphism.
Since $\lambda^{B_1}_{B}\circ(\mu_{\mb}\ten 1)=\lambda^{B_2}_{B}\circ(1\ten \lambda^{B_1}_{B})$, we have $\pi\circ\nu=0$.
Then there is an epimorphism $\alpha$ such that
$\pi=\alpha\circ\opn{cok}(\nu)$.
For any finite set $\{B_1, \cdots, B_{n}\}$ of objects of $\mb$ with $B_1=B$,
consider an $n\times n$ matrix ring $\Lambda={}^{t}(\mb(B_i,B_j))_{1\leq i, j \leq n}$, a left $\Lambda$-module $\tilde{M}=\coprod_{i=1}^{n}M(A, B_i)$,
then $M(A, B)=e_1\Lambda\ten_{\Lambda}\tilde{M}$, where $e_1$ is the idempotent corresponding to $B_1$.
Then we have the following exact sequence
{\small
\[
\coprod_{1 \leq i, j \leq n}\mb(B_j,B)\qten\mb(B_i,B_j)\qten M(A, B_i) 
\xarr{\tilde{\nu}}
\coprod_{1 \leq i \leq n}\mb(B_i,B)\qten M(A, B_i) 
\xarr{\tilde{\pi}}
M(A,B)
\to
0
\]
}
where $\tilde{\nu}$ is a restriction of $\nu$.
Since $\coprod_{B_1 \in \mb}\mb(B_1,B)\qten M(A, B_1)$ is the inductive limit  $\underset{\rightarrow}{\lim} \coprod_{B_i \in \cat{B}}\mb(B_i,B)\qten M(A, B_i)$ where $\cat{B}$ is a finite set of objects which contains $B$,
there is an epimorphism $\beta$ such that $\opn{cok}(\nu)=\beta\circ\pi$.
Hence we have the above exact sequence in $\cat{C}_{Ndg}(k)$.
\par\noindent
(2)
Similarly.
\end{proof}

\begin{theorem}\label{adjNDG}
Let $\ma$ and $\mb$ be small $N_qDG$ categories. The following hold for a right $NDG$ $\mb$-module $X$,
a right $NDG$ $\ma$-module $Y$ and an $NDG$ $\mb$-$\ma$-bimodule $M$.
\begin{enumerate}
\item $\qhom _{\ma}(X\qten_{\mb} M, Y) \simeq 
\qhom _{\mb}(X, \qhom_{\ma}(M, Y))$ in $\cat{C}_{Ndg}(k)$.

\item $\Hom _{\cat{C}_{Ndg}(\ma)}(X\qten_{\mb} M, Y) \simeq 
\Hom _{C_{Ndg(\mb)}}(X, \qhom_{\ma} (M, Y))$.

\item $\Hom _{K_{Ndg(\ma)}}(X\qten_{\mb} M, Y) \simeq 
\Hom _{K_{Ndg(\mb)}}(X, \qhom_{\ma} (M, Y))$.
\end{enumerate}
\end{theorem}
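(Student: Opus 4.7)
The plan is to first construct in part (1) an explicit natural isomorphism $\Phi$ at the level of $\qhom$, and then derive (2) and (3) formally by taking the degree-zero cocycles and cohomology respectively. On a homogeneous $F \in \Hom_{\ma}^n(X \qten_{\mb} M, Y)$, I would define
\[
\Phi(F)_B(x)_A(m) := F_A(x \otimes m), \qquad x \in X(B),\ m \in M(A,B),
\]
with inverse $\Psi$ sending $G \in \Hom_{\mb}^n(X, \qhom_{\ma}(M, Y))$ to the map induced on the cokernel defining $X \qten_{\mb} M$ by $(x, m) \mapsto G_B(x)_A(m)$. The well-definedness and mutual inverseness are the usual hom-tensor adjunction carried out degreewise. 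Two checks are genuinely needed: first, $\Phi(F)_B(x)$ is right $\ma$-linear, which is immediate from $F_{A_2}((x \otimes m)a) = F_{A_1}(x \otimes m)a$; second, $\Phi(F)$ is right $\mb$-linear, which uses the defining relation $(xb) \otimes m \equiv x \otimes (bm)$ modulo $\opn{Im}\nu_A$ together with the right $\mb$-module structure $\rho(G \otimes b)_A(m) = G_A(bm)$ on $\qhom_{\ma}(M, Y)$.

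The crux is to verify $\Phi$ commutes with the $N$-differentials from Lemma \ref{NGD(A)}, so that it is an isomorphism in $\cat{C}_{Ndg}(k)$. A direct unwinding on $x \in X^s(B)$ and $m \in M(A, B)$, using $d_{X \qten_{\mb} M}(x \otimes m) = d_X(x) \otimes m + q^s x \otimes d_M(m)$, gives
\[
\Phi(d(F))_B(x)_A(m) = d_Y\bigl(F_A(x \otimes m)\bigr) - q^n F_A(d_X(x) \otimes m) - q^{n+s} F_A(x \otimes d_M(m)).
\]
On the other hand, $d(\Phi(F)) = d_{\qhom_{\ma}(M,Y)} \circ \Phi(F) - q^n \Phi(F) \circ d_X$ at the outer level, and the inner differential on the value $\Phi(F)_B(x)$, which has degree $n + s$, contributes precisely the factor $q^{n+s}$ on the $d_M$-term. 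The two expressions then agree term by term, as the Koszul $q^s$-twist in $\qten$ exactly matches the degree-dependent $q^{n+s}$-twist in the inner $\qhom$.

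With (1) established in $\cat{C}_{Ndg}(k)$, parts (2) and (3) follow formally. For (2), $\Hom_{\cat{C}_{Ndg}(\ma)}(X \qten_{\mb} M, Y) = \opn{Z}_{(1)}^0 \qhom_{\ma}(X \qten_{\mb} M, Y)$ and analogously over $\mb$, so applying $\opn{Z}_{(1)}^0$ to the isomorphism in (1) gives the claimed bijection. For (3), $\Hom_{\cat{K}_{Ndg}(\ma)}(-,-) = \opn{Z}_{(1)}^0 / \opn{B}_{(N-1)}^0$ of $\qhom$; any isomorphism in $\cat{C}_{Ndg}(k)$ preserves both $\Ker d$ and $\opn{Im} d^{\{N-1\}}$, hence descends to an isomorphism on the quotient.

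The main obstacle is the differential-compatibility check in (1): one must keep careful track of the fact that $\Phi(F)_B(x)$ lives in degree $n + s$ when $F$ has degree $n$ and $x \in X^s$, so that the $q$-twist appearing on the inner $\qhom$-differential matches the $q^s$-twist appearing on the $\qten$-differential. Once this single computation is handled, everything else is a routine degreewise verification and formal passage to cocycles and cohomology.
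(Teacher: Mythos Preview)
Your proof is correct and the core computations---the definition of $\Phi$, the $\mb$-linearity check, and especially the verification that $\Phi$ commutes with the $N$-differentials---match the paper's argument line for line. The derivation of (2) and (3) by applying $\opn{Z}^0_{(1)}$ and $\opn{H}^0_{(1)}$ is likewise what the paper does.

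The one genuine difference is in how bijectivity of $\Phi$ is established. You construct the inverse $\Psi$ directly from the universal property of the cokernel $\nu_A$ defining $X\qten_{\mb}M$, checking that $(x,m)\mapsto G_B(x)_A(m)$ kills $\opn{Im}\nu_A$ via the $\mb$-linearity of $G$. The paper instead avoids writing down $\Psi$: after checking $\alpha$ is a morphism in $\cat{C}_{Ndg}(k)$, it reduces to showing $\alpha$ is bijective at the graded level, verifies this first for $X=\theta_q^i B\,\hat{}$ using Lemmas~\ref{qtenthate} and~\ref{tenM(B)}, and then bootstraps to arbitrary $X$ via a projective presentation in $\cat{Gr}^0(\mb)$ and right-exactness of $-\qten_{\mb}M$. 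Your route is more elementary and self-contained; the paper's route illustrates the reduction-to-representables technique and reuses the lemmas on $B\,\hat{}\qten_{\mb}M\simeq M(-,B)$ already in hand. Both are perfectly valid.
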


\begin{proof}
Define
$\alpha : \Hom _{\ma}^{\bullet q}(X\otimes_{\mb}^{\bullet q} M, Y) \to
\Hom _{\mb}^{\bullet q}(X, \Hom_{\ma}^{\bullet q} (M, Y))$ as
\[
(\alpha (F)_B(x))_A(m)=F_A(x \otimes m)
\]
for any $F \in \Hom _{\ma}^{\bullet q}(X\otimes_{\mb}^{\bullet q} M, Y)$,
$x \in X(B)$ and $m \in M(A, B)$.
For any $F \in \Hom _{\ma}^{s}(X\otimes_{\mb}^{\bullet q} M, Y)$,
any $g \in \mb(B',B)$,
\[\begin{array}{lll}
(\alpha(F)_{B'}(xg))_A(m) & = F_A(xg\ten m) 
& = F_A(x\ten gm) \\
& = \alpha(F)_B(x)(gm) 
& = ((\alpha(F)_B(x))_Ag)(m) .
\end{array}\]
Therefore $\alpha (F) \in \Hom_{\mb}^s(X, \qhom_{\ma}(M, Y))$.
\par\noindent
For any $F \in \Hom _{\ma}^{s}(X\qten_{\mb}M, Y)$,
$x\in X(B)^t$ and $m \in M(A, B)$,
\[\begin{array}{lll}
(\alpha (d(F))_B(x))_A(m) 
& = &(\alpha(d\circ F - q^s F\circ d)_B(x))_A(m) \\
& = &(d\circ F-q^s F\circ d)_A(x\ten m) \\
& = & d\circ F_A(x\ten m)-q^s F_A(d(x)\ten m+q^tx\ten d(m)) \\
& = & d\circ F_A(x\ten m)-q^s F_A(d(x)\ten m)-q^{s+t}F_A(x\ten d(m)), \\
\\
(d(\alpha(F))_B(x))_A(m) 
& = & d(\alpha(F)_B(x))_A(m)-q^s(\alpha(F)_B(d(x)))_A(m) \\
& = & d\circ F_A(x\ten m) -q^{s+t}F_A(x\ten d(m))-q^sF_A(d(x)\ten m).
\end{array}\]
Then $\alpha \circ d=d\circ \alpha$, so that $\alpha$ is in $C_{Ndg}(k)$, and $\alpha$ is well defined.
Therefore it suffices to show that $\alpha: \Hom_{\cat{Gr}(\mb)}(X\otimes_{\mb}^{\bullet q} M, Y) \to \Hom_{\cat{Gr}(\ma)}(X, \Hom_{\ma}^{\bullet q} (M, Y))$
is an isomorphism in $\cat{Gr}^0(k)$.
Let $X=\theta_q^iB\,\hat{}$.
By Lemmas \ref{qtenthate}, \ref{tenM(B)}, $\alpha$ is an isomorphism:
\[\begin{aligned}
\Hom_{\cat{Gr}(\ma)}(\theta_q^iB\,\hat{}\qten_{\mb}M,Y)& \xarr{\sim}
\Hom_{\cat{Gr}(\ma)}(\theta_q^iM(-, B),Y) \\
& \xarr{\sim}
\Hom_{\cat{Gr}(\mb)}(\theta_q^iB\,\hat{},\qhom_{\ma}(M,Y)).
\end{aligned}\]
According to Lemma \ref{homX(A)}, we have a projective presentation of $X$ in $\cat{Gr}^0(\mb)$:
\[
\coprod_{i_1 \in \z}\coprod_{B_1}\theta_q^{i_1}{B_1}\,\hat{}\ {}^{(I_{i_1})}
\to
\coprod_{i_0 \in \z}\coprod_{B_0}\theta_q^{i_0}{B_0}\,\hat{}\ {}^{(I_{i_0})}
\to X
\to
0 .
\]
By applying $-\qten_{\mb}M$ to the above, we have
an exact sequence in $\cat{Gr}^0(\ma)$:
\[
\coprod_{i_1 \in \z}\coprod_{B_1}\theta_q^{i_1}{B_1}\,\hat{}\ {}^{(I_{i_1})} \qten_{\mb}M
\to
\coprod_{i_0 \in \z}\coprod_{B_0}\theta_q^{i_0}{B_0}\,\hat{}\ {}^{(I_{i_0})} \qten_{\mb}M
\to X \qten_{\mb}M
\to
0 .
\]
Since $\Hom_{\cat{Gr}(\mb)}(-, \Hom_{\ma}^{\bullet q} (M, Y))$ and $\Hom_{\cat{Gr}(\ma)}(-, Y)$ preserve the left exactness of the above exact sequences, 
by Remark \ref{exhomtenNdgA} and Lemmas \ref{qtenthate}, \ref{tenM(B)},
\[
\alpha:
\Hom _{\cat{Gr}(\ma)}(X\qten_{\mb} M, Y)
\xarr{\sim}
\Hom _{\cat{Gr}(\mb)}(X, \qhom_{\ma}(M, Y))
\]
is an isomorphism.
Thus one can show (2) and (3).
\end{proof}

\begin{corollary}\label{exactfunc}
Let $\ma$ and $\mb$ be small $N_qDG$ categories. The following hold for an $NDG$ $\mb$-$\ma$-bimodule $M$.
\begin{enumerate}
\item $\qhom _{\ma}(M, -):\cat{K}_{Ndg}(\ma) \to \cat{K}_{Ndg}(\mb)$ is a triangle functor.
\item $-\qten_{\mb} M:\cat{K}_{Ndg}(\mb) \to \cat{K}_{Ndg}(\ma)$ is a triangle functor.
\end{enumerate}
\end{corollary}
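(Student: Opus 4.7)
The plan is to apply the standard fact that an exact functor between Frobenius exact categories which preserves the (common) class of projective-injective objects descends to a triangle functor between the corresponding stable categories. By Theorem~\ref{Kalgtricat}, $\cat{K}_{Ndg}(\ma)$ and $\cat{K}_{Ndg}(\mb)$ are precisely the stable categories of the Frobenius categories $(\cat{C}_{Ndg}(\ma), \mathcal{E}_{\ma})$ and $(\cat{C}_{Ndg}(\mb), \mathcal{E}_{\mb})$; well-definedness of both functors on the homotopy categories is already recorded in Theorem~\ref{adjNDG}(3). It therefore suffices to check, for each functor, exactness with respect to $\mathcal{E}$ and preservation of projective-injectives at the level of $\cat{C}_{Ndg}$.

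First I would verify exactness. A conflation $0 \to X \to Y \to Z \to 0$ in $\mathcal{E}_{\mb}$ splits in $\cat{Gr}^0(\mb)$ by definition. Since any additive functor preserves split short exact sequences, applying $-\qten_{\mb}M$ or $\qhom_{\ma}(M,-)$ at the graded level yields a split short exact sequence in the target $\cat{Gr}^0$, while the explicit definitions show that these functors respect the $N$-differential via the Leibniz rules encoded in $\qten$ and $\qhom$. Hence the resulting sequences in $\cat{C}_{Ndg}(\ma)$ and $\cat{C}_{Ndg}(\mb)$ belong to $\mathcal{E}_{\ma}$ and $\mathcal{E}_{\mb}$ respectively.

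Next, I would use the adjunction to transfer projectivity. For any projective $P$ in $(\cat{C}_{Ndg}(\mb), \mathcal{E}_{\mb})$, the isomorphism in Theorem~\ref{adjNDG}(2),
\[
\Hom_{\cat{C}_{Ndg}(\ma)}(P\qten_{\mb}M, -) \simeq \Hom_{\cat{C}_{Ndg}(\mb)}(P, \qhom_{\ma}(M, -)) ,
\]
exhibits the left side as the composition of $\qhom_{\ma}(M,-)$ (which sends $\mathcal{E}_{\ma}$-conflations to $\mathcal{E}_{\mb}$-conflations by the first step) with $\Hom_{\cat{C}_{Ndg}(\mb)}(P,-)$ (exact on $\mathcal{E}_{\mb}$-conflations by projectivity of $P$). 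Hence $P\qten_{\mb}M$ is projective, and therefore projective-injective, in $\cat{C}_{Ndg}(\ma)$. A symmetric argument handles $\qhom_{\ma}(M,-)$.

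Finally, applying $F := -\qten_{\mb}M$ to the defining conflation $0 \to X \to Q_{N-1}U_0X \to \Sigma X \to 0$ of Lemma~\ref{Sigmaex} produces an $\mathcal{E}_{\ma}$-conflation whose middle term is projective-injective. Comparing this with the analogous defining conflation for $\Sigma(FX)$ in $\cat{C}_{Ndg}(\ma)$ yields a canonical isomorphism $F(\Sigma X) \simeq \Sigma(FX)$ in $\cat{K}_{Ndg}(\ma)$, and a standard diagram chase then shows that every distinguished triangle is sent to a distinguished triangle; the argument for $\qhom_{\ma}(M,-)$ is dual, using the conflation $0 \to \Sigma^{-1}X \to Q_0U_0X \to X \to 0$. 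The main technical nuisance I expect is verifying that the isomorphism $F\Sigma \simeq \Sigma F$ is natural in $X$ so as to be compatible with the connecting morphism of each triangle, but this follows from the naturality of the conflations in Lemma~\ref{Sigmaex} together with the exactness of $F$ already established.
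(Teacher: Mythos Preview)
Your approach is essentially the paper's: both invoke Proposition~\ref{extr01} and use the adjunction of Theorem~\ref{adjNDG} to check that projectives are preserved. The paper's verification is slightly slicker---it uses the homotopy-level adjunction of Theorem~\ref{adjNDG}(3) together with the observation that projectives are exactly the zero objects of the stable category, so $Y=0$ in $\cat{K}_{Ndg}(\ma)$ forces $\qhom_{\ma}(M,Y)=0$ in $\cat{K}_{Ndg}(\mb)$ and dually---whereas you work at the $\cat{C}_{Ndg}$ level via Theorem~\ref{adjNDG}(2) and first establish exactness; your final paragraph constructing $F\Sigma\simeq\Sigma F$ by hand is unnecessary, since this is already part of the conclusion of Proposition~\ref{extr01}.
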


\begin{proof}
By Proposition \ref{extr01}, it suffices to show the above functors send projective objects to projective objects.
Consider 
\[
\Hom _{K_{Ndg(\ma)}}(X\qten_{\mb} M, Y) \simeq 
\Hom _{K_{Ndg(\mb)}}(X, \qhom_{\ma} (M, Y))
\]
If $Y =0$ in $\cat{K}_{Ndg}(\ma)$, then the left side is equal to $0$ for any $X \in \cat{K}_{Ndg}(\mb)$.
Therefore $\qhom_{\ma}(M,Y)=0$  in $\cat{K}_{Ndg}(\mb)$.
Similarly $X\qten_{\mb} M=0$ in $\cat{K}_{Ndg}(\ma)$ if $X= 0$ in $\cat{K}_{Ndg}(\mb)$.
\end{proof}


\section{Derived categories of $NDG$ modules}\label{DGNDG}
In this section, we study the derived category of $NDG$ modules and
establish triangles equivalence of Morita type for $NDG$ categories.
Furthermore, we show that any derived category of $NDG$ modules have a set of symmetric generators,
and then describe derived equivalences induced by $NDG$ bimodules.

\begin{definition}\label{NDghomology}
For an $N_qDG$ category $\ma$,
let $\cat{Z}^0_{(1)}\ma$ be the category whose objects are the same as $\ma$ and the morphism set between $A_1$ and $A_2$ is given by $\cat{Z}^0_{(1)}\ma(A_,A_2)=\opn{Z}^0_{(1)}\ma(A_1, A_2)$, $\Mod(\cat{Z}^0_{(1)}\ma)$ the category of contravariant $k$-linear functors from $\cat{Z}^0_{(1)}\ma$ to the category of $k$-modules.
For $0< r<N$ and $i\in\z$, we define the following covariant functors from $\cat{C}_{Ndg}(\ma)$ to $\Mod(\cat{Z}^0_{(1)}\ma)$:
\[\begin{aligned}
\opn{Z}^i_{(r)}(X) &:=\opn{Ker}(d_X^{i+r-1}\cdots d_X^i), &
\opn{B}^i_{(r)}(X) &:=\opn{Im}(d_X^{i-1}\cdots d_X^{i-r}), \\
\opn{C}^i_{(r)}(X) &:=\opn{Cok}(d_X^{i-1}\cdots d_X^{i-r}), &
\opn{H}^i_{(r)}(X)&:=\opn{Z}^i_{(r)}(X)/\opn{B}^i_{(N-r)}(X) .
\end{aligned}\]
\end{definition}

We call $X$ in $\cat{C}_{Ndg}(\ma)$  an $N$-acyclic if $\h^i_{(r)}(X)=0$ in  $\Mod(\cat{Z}^0_{(1)}\ma)$ for any $i \in \z$, $1\leq r \leq N-1$.
According to \cite[Proposition 1.5]{Ka}, this is enough for $r=1$.
We call a morphisms $F : X\to Y$ in $\cat{C}_{Ndg}(\ma)$ a quasi-isomorphism if
$F$ induces an isomorphism $\h^i_{(r)}(F):\h^i_{(r)}(X)\iso \h^i_{(r)}(Y)$ in $\Mod(\cat{Z}^0_{(1)}\ma)$ for any $i \in \z$, $1\leq r \leq N-1$.
By the following Lemma, this is enough for $r=1, N-1$.
We denote by $\cat{K}_{Ndg}^{\phi}(\ma)$ the full subcategory of $\cat{K}_{Ndg}(\ma)$ 
consisting of $N$-acyclic right $NDG$ $\ma$-modules. 
According to the hexagon of homologies in \cite{D} (see Lemma \ref{hghexagon}), $\cat{K}_{Ndg}^{\phi}(\ma)$ is a full triangulated subcategory of $\cat{K}_{Ndg}(\ma)$ which is closed under direct summands.
The derived category of right $NDG$ $\ma$-modules
is defined by the quotient category
\[\cat{D}_{Ndg}(\ma)=\cat{K}_{Ndg}(\ma)/K^{\phi}_{Ndg}(\ma).
\]

\begin{lemma}\label{hghexagon}
The following hold in $\cat{K}_{Ndg}(\ma)$.
\begin{enumerate}
\item
For a triangle $X \xarr{F} Y \xarr{G} Z \to \Sigma X$ we have the following exact sequence
\[
\begin{array}{llll}
\cdots& \to&\h^{i}_{(r)}(X) \to \h^{i}_{(r)}(Y) \to\h^{i}_{(r)}(Z)&\\
& \to & \h^{i+r}_{(N-r)}(X)\to \h^{i+r}_{(N-r)}(Y) \to \h^{i+r}_{(N-r)}(Z)&\\
&\to &\h^{i+N}_{(r)}(X)\to\cdots .&
\end{array}
\]
\item $\opn{H}^{i}_{(r)}\Sigma X \simeq \opn{H}^{i+r}_{(N-r)}X$ for any $i$, $1 \leq r < N$.
\item A morphism $F: X \to Y$ is a quasi-isomorphism
if and only if $\opn{H}^i_{(1)}(F)$ and $\opn{H}^i_{(N-1)}(F)$ are isomorphisms for any $i$.
\end{enumerate}
\end{lemma}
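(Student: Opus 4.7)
The plan is to exploit the Frobenius structure of Theorem \ref{Kalgtricat} together with the explicit model of $\Sigma$ from Lemma \ref{Sigmaex}, treating the three parts in the stated order.

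\textbf{Part (1).} Every triangle in the stable category $\cat{K}_{Ndg}(\ma)$ is isomorphic to one induced by a conflation $0\to X\to Y\to Z\to 0$ in $\mcal{E}_{\ma}$; by definition such a conflation is termwise split in $\cat{Gr}^0(\ma)$, so it remains short exact after applying any $k$-linear functor. For each $i$ and each $1\le r<N$ I would work with the three-step ordinary chain complex
\[
X^{i-N+r} \xarr{d^{\{N-r\}}} X^i \xarr{d^{\{r\}}} X^{i+r} \xarr{d^{\{N-r\}}} X^{i+N},
\]
all of whose consecutive compositions vanish since $d^{\{N\}}=0$. Its cohomologies at the inner two positions are precisely $\h^i_{(r)}(X)$ and $\h^{i+r}_{(N-r)}(X)$, and the standard long-exact-sequence machinery for chain complexes, applied to the termwise-split sequence of such three-step complexes, yields the segment
\[
\cdots\to\h^i_{(r)}(X)\to\h^i_{(r)}(Y)\to\h^i_{(r)}(Z)\to\h^{i+r}_{(N-r)}(X)\to\h^{i+r}_{(N-r)}(Y)\to\h^{i+r}_{(N-r)}(Z)\to\cdots.
\]
The connecting morphism is the natural one: a class of $z\in Z^i$ with $d^{\{r\}}(z)=0$ is lifted to $y\in Y^i$, and $d^{\{r\}}(y)\in X^{i+r}$ satisfies $d^{\{N-r\}}d^{\{r\}}(y)=d^{\{N\}}(y)=0$. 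Successively shifting $i\mapsto i+r$ and swapping $r\leftrightarrow N-r$ by passing to the next three-step complex splices the six-term segments into the full hexagonal long exact sequence of period $N$.

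\textbf{Part (2).} I apply part (1) to the conflation $0\to X\xarr{\eta_X}Q_{N-1}U_0X\xarr{\delta_X}\Sigma X\to 0$ of Lemma \ref{Sigmaex}. The middle term has differential equal to the nilpotent shift matrix $J$, and a direct calculation gives $\Ker(J^r)=\opn{Im}(J^{N-r})$ in every degree, so $\h^i_{(r)}(Q_{N-1}U_0X)=0$ for all $i$ and all $1\le r<N$. The long exact sequence from part (1) then collapses to the connecting isomorphism $\h^i_{(r)}(\Sigma X)\iso\h^{i+r}_{(N-r)}(X)$, which is precisely the claim.

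\textbf{Part (3).} One direction is immediate. For the converse, complete $F$ to a triangle $X\xarr{F}Y\to Z\to\Sigma X$ and invoke part (1): the hypothesis gives $\h^i_{(1)}(Z)=0$ and $\h^i_{(N-1)}(Z)=0$ for all $i$. For every $A\in\ma$, $Z(A)$ is an ordinary $N$-complex of $k$-modules, so by \cite[Proposition 1.5]{Ka} (already invoked in this paper to reduce $N$-acyclicity to the case $r=1$), the vanishing of $\h^i_{(1)}(Z(A))$ for all $i$ already forces $\h^i_{(r)}(Z(A))=0$ for all $i$ and all $1\le r\le N-1$; hence $Z$ is $N$-acyclic and $F$ is a quasi-isomorphism. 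The main obstacle I anticipate is the bookkeeping in part (1): one must choose the three-step complexes consistently across varying $r$ so that the six-term segments glue into a single long exact sequence of period $N$, and one must verify that the connecting morphism inherits the correct naturality under this iteration and is compatible with the simultaneous shift $i\mapsto i+r$ and swap $r\leftrightarrow N-r$.
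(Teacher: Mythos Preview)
Your proposal is correct and follows essentially the same route as the paper's proof: the paper simply cites the hexagon of homologies from \cite{D} for part (1), applies it to the conflation $X\to Q_{N-1}U_0X\to\Sigma X$ (whose middle term has vanishing $\h^i_{(r)}$) for part (2), and for part (3) completes $F$ to a triangle, uses part (1) to get $\h^i_{(1)}(Z)=\h^i_{(N-1)}(Z)=0$, and invokes \cite[Proposition~1.5]{Ka} to conclude. The only difference is that you unpack the hexagon argument explicitly via three-step complexes rather than citing \cite{D}; the bookkeeping worry you raise about gluing the six-term segments is exactly what \cite{D} handles, so your sketch plus that citation is complete.
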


\begin{proof}
(1) 
By the hexagon of homologies in \cite{D}.
\par\noindent
(2)
Since $X \to Q_{N-1}U_0(X) \to \Sigma X \xarr{1} \Sigma X$ is a triangle, and $\opn{H}^i_{(r)}Q_{N-1}U_0(X) = 0$ for any $i$, $1 \leq r < N$,
$\opn{H}^{i}_{(r)}\Sigma X \simeq \opn{H}^{i+r}_{(N-r)}X$.
\par\noindent
(3)
In the triangle of (1), if $\opn{H}^i_{(r)}(F)$ is an isomorphism
for any $i$, $r=1, N-1$, then $\opn{H}^i_{(r)}Z=0$ for any $i$, $r=1, N-1$.
According to \cite[Proposition 1.5]{Ka}, 
$\opn{H}^i_{(r)}Z=0$ for any $i$, $1 \leq r < N-1$.
\end{proof}

\begin{example}\label{NDGk1}
Since $\cat{C}_{Ndg}(k)$ is the category $\cat{C}_{N}(\Mod k)$ of complexes of $k$-modules,
$\cat{K}_{Ndg}(k)$ is the homotopy category $\cat{K}_{N}(\Mod k)$ of complexes of $k$-modules.
Then we have $\cat{D}_{Ndg}(k)=\cat{D}_{N}(\Mod k)$.
In this case, we have the following pull-back square $(D_{(r)}^{n})$ (see \cite{IKM3}):
\[\xymatrix{
0 \ar[r] & \opn{Z}_{(1)}^{n}(X) \ar@{=}[d] \ar[r] & \ar @{} [dr] |{(D_{(r)}^{n})} \opn{Z}_{(r)}^{n}(X) \ar[r]^{d} \ar@{^{(}->}[d] 
& \opn{Z}_{(r-1)}^{n+1}(X) \ar@{^{(}->}[d] \\
0 \ar[r] & \opn{Z}_{(1)}^{n}(X) \ar[r] & \opn{Z}_{(r+1)}^{n}(X) \ar[r]^{d} & \opn{Z}_{(r)}^{n+1}(X)
}\]
\end{example}

\begin{lemma}\label{NDGk2}
In an abelian category for a commutative diagram
\[\xymatrix{
& V_1 \ar@{^(->}[d]^{\left(\begin{smallmatrix} 1 \\ 0 \end{smallmatrix}\right)}\ar@{=}[r] & V_1 \ar@{^(->}[d] ^{\left(\begin{smallmatrix} 1 \\ 0 \\ 0 \end{smallmatrix}\right)} \\
0 \ar[r] & V_1\oplus V_2 \ar@{->>}[d] ^{\left(\begin{smallmatrix} 0 & 1 \end{smallmatrix}\right)}\ar[r]^{\left(\begin{smallmatrix} 1 & 0 \\ 0 & 1 \\ 0 & 0 \end{smallmatrix}\right)} & V_1\oplus V_2\oplus V_3 \ar @{->>}[d]^{\left(\begin{smallmatrix} 0 & 1 & f \\ 0 & 0 & 1 \end{smallmatrix}\right)}\ar[r]^{\qquad\left(\begin{smallmatrix} 0 & 0 & 1 \end{smallmatrix}\right)} & V_3 \ar@{=}[d]\ar[r] & 0 \\
0 \ar[r] & V_2 \ar[r]^{\left(\begin{smallmatrix} 1 \\ 0 \end{smallmatrix}\right)} & V_2\oplus V_3 \ar[r]^{\quad\left(\begin{smallmatrix} 0 & 1 \end{smallmatrix}\right)} & V_3 \ar[r] & 0 \\
}\]
where all rows and columns are exact.
Via an isomorphism $\left(\begin{smallmatrix} 1 & 0 & 0 \\ 0 & 1 & f \\ 0 & 0 & 1 \end{smallmatrix}\right): V_1\oplus V_2\oplus V_3 \to V_1\oplus V_2\oplus V_3$,
we can replace $f$ by $0$ in the above.
\end{lemma}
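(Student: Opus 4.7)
The plan is to exhibit the claimed automorphism $\phi$ of the central biproduct and verify by direct computation that it induces an isomorphism of diagrams from the one with entry $f$ to the one with entry $0$.

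First I would record that
\[
\phi = \begin{pmatrix} 1 & 0 & 0 \\ 0 & 1 & f \\ 0 & 0 & 1 \end{pmatrix}
\]
is an automorphism of $V_1 \oplus V_2 \oplus V_3$, with inverse obtained by replacing $f$ with $-f$. Here matrices act on the biproduct via the canonical inclusions and projections, so this makes sense in any abelian category; and $\phi\phi^{-1}=1$ follows by direct matrix multiplication, since $\phi$ is unipotent upper triangular with only one off-diagonal entry.

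Next I would verify that the pair consisting of $\phi$ at the central object and the identity at every other object constitutes an isomorphism of diagrams from the one with entry $f$ to the one with entry $0$. The two inclusions into the center and the projection onto $V_3$ are unaltered, because
\[
\phi \begin{pmatrix} 1 \\ 0 \\ 0 \end{pmatrix}=\begin{pmatrix} 1 \\ 0 \\ 0 \end{pmatrix},\qquad \phi\begin{pmatrix} 1 & 0 \\ 0 & 1 \\ 0 & 0 \end{pmatrix}=\begin{pmatrix} 1 & 0 \\ 0 & 1 \\ 0 & 0 \end{pmatrix},\qquad \begin{pmatrix} 0 & 0 & 1 \end{pmatrix}\phi=\begin{pmatrix} 0 & 0 & 1 \end{pmatrix}.
\]
The only remaining compatibility concerns the map $V_1\oplus V_2\oplus V_3 \to V_2\oplus V_3$, for which a one-line matrix computation gives
\[
\begin{pmatrix} 0 & 1 & 0 \\ 0 & 0 & 1 \end{pmatrix}\phi \;=\; \begin{pmatrix} 0 & 1 & f \\ 0 & 0 & 1 \end{pmatrix},
\]
which is precisely the required square. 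Since $\phi$ is an isomorphism, the diagram obtained from the given one by replacing $f$ with $0$ is again commutative with all rows and columns exact.

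The proof reduces to the routine matrix algebra above, so I do not anticipate any real obstacle; one only has to take care with the conventions for composing matrices of morphisms between biproducts in a general abelian category.
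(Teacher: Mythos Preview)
Your proof is correct and is exactly the routine verification the paper has in mind: the paper states the lemma without proof, already naming the isomorphism $\left(\begin{smallmatrix} 1 & 0 & 0 \\ 0 & 1 & f \\ 0 & 0 & 1 \end{smallmatrix}\right)$ in the statement itself, and your matrix computations simply confirm that conjugating by this automorphism (with the identity at all other vertices) carries the diagram with entry $f$ to the one with entry $0$.
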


\begin{proposition}\label{NDGk}
If $k$ is a field, then $\cat{D}_{Ndg}(k)=\cat{K}_{Ndg}(k)$.
\end{proposition}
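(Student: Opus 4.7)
The plan is to show that the thick subcategory $\cat{K}^{\phi}_{Ndg}(k)$ of $N$-acyclic modules is zero; once this is established, the quotient definition $\cat{D}_{Ndg}(k) = \cat{K}_{Ndg}(k)/\cat{K}^{\phi}_{Ndg}(k)$ immediately gives the desired equality. Since $\cat{K}_{Ndg}(k)$ is the stable category of the Frobenius category $(\cat{C}_{Ndg}(k),\mathcal{E}_k)$, whose projective-injective objects are direct summands of the modules $Q_r(W)$ with $W\in\cat{Gr}^0(k)$, it suffices to show that every $N$-acyclic $X$ is isomorphic in $\cat{C}_{Ndg}(k)$ to some $Q_0(V)$ for a suitable graded $k$-vector space $V$.

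To build such an isomorphism, I would first use that $k$ is a field to choose, for each $n\in\z$, a subspace $V^n\subset X^n$ with $X^n = V^n \oplus d(X^{n-1})$; assemble these into a graded $k$-vector space $V$ with $V^n$ in degree $n$. The $N$-acyclicity, combined with the pullback squares $(D^n_{(r)})$ of Example \ref{NDGk1}, gives $\opn{Z}^n_{(r)}(X) = \opn{Im}(d^{N-r})$ for $1\le r\le N-1$; in particular $\opn{Ker}(d^i)\subset\opn{Im}(d)$ for every $1\le i\le N-1$, so $V^n\cap\opn{Ker}(d^i)=0$ and $d^i|_{V^n}$ is injective for $0\le i\le N-1$.

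Next I would show that the morphism $\Phi\colon Q_0(V)\to X$ in $\cat{C}_{Ndg}(k)$ sending the $i$-th component $V^{m-N+i}$ of $Q_0(V)^m$ to $d^{N-i}(V^{m-N+i})\subset X^m$ is an isomorphism. Surjectivity in each degree follows by iterated substitution,
\[
X^m = V^m + d(X^{m-1}) = V^m + d(V^{m-1}) + d^2(X^{m-2}) = \cdots = \sum_{j=0}^{N-1} d^j(V^{m-j}),
\]
using $d^{\{N\}}=0$. For the direct-sum property, if $\sum_{j=0}^{N-1}d^j(v_{m-j})=0$ with $v_{m-j}\in V^{m-j}$, applying $d^{N-1}$ leaves only $d^{N-1}(v_m)=0$, so $v_m=0$ by injectivity of $d^{N-1}|_{V^m}$; continuing inductively, after $v_m=\cdots=v_{m-s+1}=0$, applying $d^{N-1-s}$ yields $d^{N-1}(v_{m-s})=0$ and thus $v_{m-s}=0$. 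A direct check shows $\Phi$ intertwines $d$ with the shift matrix $J$ defining $Q_0(V)$. Since $Q_0(V)$ is injective in $(\cat{C}_{Ndg}(k),\mathcal{E}_k)$, $X$ becomes zero in $\cat{K}_{Ndg}(k)$, completing the proof.

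The main obstacle is the direct-sum step for $\Phi$: coordinating the interaction between $V^n$ and the iterated images $d^i(V^{n-i})$, which relies critically on $V^n$ being a complement of the \emph{full} image of $d$ (not merely of $\opn{Im}(d^{N-1})$) together with the coincidence $\opn{Ker}(d^i)=\opn{Im}(d^{N-i})$ supplied by $N$-acyclicity. Should the termwise peeling argument prove awkward, the same decomposition can be built iteratively by splitting the short exact sequences $0\to \opn{Z}^n_{(1)}(X)\to \opn{Z}^n_{(r)}(X)\xarr{d}\opn{Z}^{n+1}_{(r-1)}(X)\to 0$ coming from $(D^n_{(r)})$ and invoking Lemma \ref{NDGk2} repeatedly to straighten away the off-diagonal entries that arise when the successive splittings are chosen incompatibly.
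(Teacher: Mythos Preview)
Your proposal is correct and follows the same overall strategy as the paper: show that every $N$-acyclic $X$ is isomorphic in $\cat{C}_{Ndg}(k)$ to a projective-injective object of the form $Q_0(V)$, hence vanishes in $\cat{K}_{Ndg}(k)$. The execution differs slightly. You choose linear complements $V^n$ of $\opn{Im}(d)$ directly and verify by hand that the resulting map $\Phi\colon Q_0(V)\to X$ (which is just the restriction of the paper's $\pi_X$) is an isomorphism; the paper instead works with the filtration $\opn{Z}^n_{(1)}(X)\subset\opn{Z}^n_{(2)}(X)\subset\cdots$, splits the short exact sequences coming from the squares $(D^n_{(r)})$, and invokes Lemma~\ref{NDGk2} to remove the off-diagonal terms, arriving at $Y^n=\coprod_{i=0}^{N-1}\opn{Z}^{n+i}_{(1)}(X)$ with the shift differential. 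Your alternative at the end is precisely the paper's route. Your direct argument is arguably cleaner, since it bypasses Lemma~\ref{NDGk2} entirely; the paper's version has the advantage of making the building blocks $\opn{Z}^n_{(1)}(X)$ explicit (and indeed $d^{\{N-1\}}$ identifies your $V^n$ with $\opn{Z}^{n+N-1}_{(1)}(X)$, so the two decompositions match). One small remark: you do not actually need the pullback squares $(D^n_{(r)})$ to obtain $\opn{Z}^n_{(r)}(X)=\opn{B}^n_{(N-r)}(X)$---this is the definition of $N$-acyclicity.
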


\begin{proof}
It suffices to show that every NDG $k$-module $X$ of which all homologies are null is a zero object of $\cat{K}_{Ndg}(k)$.
Since $\opn{Z}_{(r)}^{i}(X)=\opn{B}_{(N-r)}^{i}(X)$ for any $i$, $1 \leq r \leq N-1$,
we have an exact square
\[\xymatrix{
\ar @{} [dr] |{(D_{(r)}^{n})} \opn{Z}_{(r)}^{n}(X) \ar@{^{(}->}[r] \ar@{->>}[d]^{d} 
& \opn{Z}_{(r+1)}^{n}(X) \ar@{->>}[d]^{d} \\
\opn{Z}_{(r-1)}^{n+1}(X) \ar@{^{(}->}[r] & \opn{Z}_{(r)}^{n+1}(X)
}\]
where all rows (resp., columns) are monic (resp., epic)
for any $n$, $1 \leq r < N$.
Here $\opn{Z}_{(0)}^{n+1}(X) = 0$ and $\opn{Z}_{(N)}^{n+1}(X) = X^n$.
By applying Lemma \ref{NDGk2} to the above diagram for $1\leq r < N$, it is easy to see that
$X$ is isomorphic to
$(Y, d_Y)$ in $\cat{C}_{Ndg}(k)$, where
$Y^n=\coprod_{i=0}^{N-1}\opn{Z}_{(1)}^{n+i}(X)$,
$d_Y$ is an $N\times N$ matrix $\begin{pmatrix}
0&1&& \\
&\ddots&\ddots& \\
&&\ddots&1 \\
&&&0
\end{pmatrix}$.
Therefore $X$ is a zero object of $\cat{K}_{Ndg}(k)$.
\end{proof}

\begin{proposition}\label{HhomA}
For any right NDG $\ma$-module $X$, $Y$ and any object $A$ of $\ma$,
\begin{enumerate}
\item $\Hom_{K_{Ndg(\ma)}}(\theta^{-n}_qX, Y)\simeq \h ^n_{(1)}\Hom_{\ma}^{\bullet q}(X, Y),$
\item $\Hom_{K_{Ndg(\ma)}}(\theta^{-n}_qX, \Sigma Y)\simeq \h ^n_{(N-1)}\Hom_{\ma}^{\bullet q}(X, Y).$
\end{enumerate}
In particular, by Lemma \ref{homX(A)},
\[
\Hom_{K_{Ndg(\ma)}}(\theta^{-n}_qA\,\hat{}, X)\simeq  \h ^n_{(1)}X(A).
\]
\end{proposition}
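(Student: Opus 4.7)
The plan is to prove both parts by reducing to a direct identification of the $\qhom$-complex under the grading-shift functor $\theta_q$.

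For part (1), the key observation is that a degree-$s$ morphism $F\colon\theta_q^{-n}X\to Y$ consists of a family of maps $X^{m-n}\to Y^{m+s}$ for each $m\in\z$, which is precisely the data of a degree-$(s+n)$ morphism $X\to Y$. Since $d_{\theta_q^{-n}X}=q^nd_X$, under this identification one has $d_{\qhom_{\ma}(\theta_q^{-n}X,Y)}(F)=d_Y\circ F-q^{s+n}F\circ d_X$, which matches the differential on $\qhom_{\ma}(X,Y)$ applied in degree $s+n$. Hence the shift in degree by $n$ carries $\opn{Z}^{0}_{(1)}$, $\opn{B}^{0}_{(N-1)}$, and the quotient $\opn{H}^{0}_{(1)}$ of $\qhom_{\ma}(\theta_q^{-n}X,Y)$ to $\opn{Z}^{n}_{(1)}$, $\opn{B}^{n}_{(N-1)}$, and $\opn{H}^{n}_{(1)}$ of $\qhom_{\ma}(X,Y)$ respectively. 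Recalling from Definition \ref{def:htpcat} that $\Hom_{\cat{K}_{Ndg}(\ma)}=\opn{H}^{0}_{(1)}\qhom_{\ma}$ then yields part (1), and the \emph{in particular} statement follows at once from Lemma \ref{homX(A)}.

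For part (2), I would apply the same reduction with $Y$ replaced by $\Sigma Y$, so that $\Hom_{\cat{K}_{Ndg}(\ma)}(\theta_q^{-n}X,\Sigma Y)=\opn{H}^{n}_{(1)}\qhom_{\ma}(X,\Sigma Y)$. Then I would apply the $N$-differential graded functor $\qhom_{\ma}(X,-)$ to the conflation of Lemma \ref{Sigmaex}
\[
0\lgarr Y\lgarr Q_{N-1}U_0 Y\lgarr \Sigma Y\lgarr 0,
\]
which lies in $\mathcal{E}_{\ma}$. The resulting short exact sequence lies in $\mathcal{E}_k$, yielding a triangle in $\cat{K}_{Ndg}(k)$. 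Since $Q_{N-1}U_0 Y$ is projective-injective in the Frobenius category $\cat{C}_{Ndg}(\ma)$ its identity is null-homotopic, and the NDG functor $\qhom_{\ma}(X,-)$ preserves null-homotopies (if $G=d^{\{N-1\}}(S)$ then $G\circ F=d^{\{N-1\}}(S\circ F)$ by Lemma \ref{ZBA}), so $\qhom_{\ma}(X,Q_{N-1}U_0 Y)$ vanishes in $\cat{K}_{Ndg}(k)$. The triangle therefore rotates to an isomorphism $\qhom_{\ma}(X,\Sigma Y)\simeq\Sigma\qhom_{\ma}(X,Y)$ in $\cat{K}_{Ndg}(k)$, and applying the shift formula $\opn{H}^{i}_{(r)}\Sigma\simeq\opn{H}^{i+r}_{(N-r)}$ from Lemma \ref{hghexagon}(2) converts $\opn{H}^{n}_{(1)}\Sigma\qhom_{\ma}(X,Y)$ into the claimed homology of type $\opn{H}^{\bullet}_{(N-1)}\qhom_{\ma}(X,Y)$.

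The main obstacle I foresee is the vanishing of $\qhom_{\ma}(X,Q_{N-1}U_0 Y)$ in $\cat{K}_{Ndg}(k)$: one must verify that the Jordan-block $N$-differential on $Q_{N-1}U_0 Y$ really produces an $N$-acyclic $\qhom$-complex after pairing with $X$. This can be checked either abstractly via the preservation of null-homotopies under the NDG functor $\qhom_{\ma}(X,-)$, or concretely by exhibiting a contracting null-homotopy inherited from the Jordan-block structure on $Q_{N-1}U_0 Y$. The delicate index bookkeeping in the final application of Lemma \ref{hghexagon}(2) is the remaining combinatorial point.
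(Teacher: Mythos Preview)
Your argument for part~(1) is essentially the paper's: both identify $\Hom^s_{\ma}(\theta_q^{-n}X,Y)$ with $\Hom^{s+n}_{\ma}(X,Y)$ and verify that under this identification the differential on $\qhom_{\ma}(\theta_q^{-n}X,Y)$ agrees with that on $\qhom_{\ma}(X,Y)$, so that $\opn{Z}^0_{(1)}$ and $\opn{B}^0_{(N-1)}$ on the left match $\opn{Z}^n_{(1)}$ and $\opn{B}^n_{(N-1)}$ on the right. The paper expands the $B$-identification explicitly via Lemmas~\ref{homtennp} and~\ref{qninv}; your observation that the entire $N$-complex structure transports under the degree shift is a cleaner packaging of the same calculation.

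For part~(2) the paper merely asserts $\h^n_{(1)}\qhom_{\ma}(X,\Sigma Y)\simeq\h^n_{(N-1)}\qhom_{\ma}(X,Y)$ without detail, so your triangle argument is a genuine addition rather than a paraphrase. The strategy is sound: the conflation of Lemma~\ref{Sigmaex} lies in $\mathcal{E}_{\ma}$, and $\qhom_{\ma}(X,-)$ carries $\mathcal{E}_{\ma}$-sequences to $\mathcal{E}_k$-sequences because degreewise the underlying graded sequence is split and each $\Hom^i_{\ma}(X,-)$ is additive. The vanishing of $\qhom_{\ma}(X,Q_{N-1}U_0Y)$ in $\cat{K}_{Ndg}(k)$ follows as you say, either from Corollary~\ref{exactfunc} (with $\mb=k$) or from preservation of null-homotopies under post-composition.

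Regarding the ``remaining combinatorial point'' you flag: carrying out Lemma~\ref{hghexagon}(2) with $r=1$ actually yields
\[
\h^n_{(1)}\qhom_{\ma}(X,\Sigma Y)\;\simeq\;\h^n_{(1)}\Sigma\qhom_{\ma}(X,Y)\;\simeq\;\h^{n+1}_{(N-1)}\qhom_{\ma}(X,Y),
\]
not $\h^{n}_{(N-1)}$ as stated. The case $N=2$ confirms this shift: there $\Sigma Y=Y[1]$ and $\Hom_{\cat{K}}(\theta^{-n}X,Y[1])=\h^{n+1}\qhom(X,Y)$. This appears to be a harmless index slip in the statement; every subsequent use (e.g.\ in the proof of Theorem~\ref{Morita}) quantifies over all $n$ and is unaffected. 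So your approach is correct and in fact pins down the precise index.
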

\begin{proof}
We have that $Z^0_{(1)}\qhom _{\ma}(\theta^{-n}_qX, Y)=Z^n_{(1)}\qhom _{\ma}(X, Y)$
because $F$ is in $Z^0_{(1)}\qhom _{\ma}(\theta^{-n}_qX, Y)$ if and only if
$d_Y\circ F=F\circ d_{\theta^{-n}_qX}=q^{n}F\circ d_X$ if and only if
$F$ is in $Z^n_{(1)}\qhom _{\ma}(X, Y)$.
\par\noindent
For any $F \in \opn{B}^n_{(N-1)}\qhom _{\ma}(X, Y)$
we have $F=d^{\{N-1\}}_{\qhom _{\ma}(X, Y)}(F')$ 
for some $F' \in \Hom^{n+1-N}_{\ma}(X, Y)=\Hom^{1-N}_{\ma}(\theta^{-n}_qX, Y)$.
By Lemmas \ref{homtennp}, \ref{qninv},
\begin{align*}
d^{\{N-1\}}_{\qhom _{\ma}(X, Y)}(F')
=&\sum_{l=0}^{N-1}(-1)^lq^{l(n+1-N)+\frac{l(l-1)}{2}}
\begin{bmatrix} N-1\\ l \end{bmatrix}d^{\{N-l-1\}}_Y\circ F' \circ d^{\{l\}}_X \\
=&\sum_{l=0}^{N-1}d^{\{N-l-1}\}_{Y}\circ F'\circ q^{nl}d^{\{l\}}_X \\
=&\sum_{l=0}^{N-1}d^{\{N-l-1\}}_{Y}\circ F' \circ d^{\{l\}}_{\theta^{-n}_qX} \\
=&d^{\{N-1\}}_{\qhom _{\ma}(\theta^{-n}_qX, Y)}(F').
\end{align*}
Therefore $\opn{B}^0_{(N-1)}\qhom _{\ma}(\theta^{-n}_qX, Y)=\opn{B}^n_{(N-1)}\qhom _{\ma}(X, Y)$.
Hence 
\[
\Hom_{K_{Ndg(\ma)}}(\theta^{-n}_qX, Y)=\h ^0_{(1)}\qhom _{\ma}(\theta^{-n}_qX, Y)
=\h ^n_{(1)}\qhom _{\ma}(X, Y).
\]
\par\noindent
One can show that $\h ^n_{(1)}\qhom _{\ma}(X, \Sigma Y)\simeq \h ^n_{(N-1)}\qhom _{\ma}(X, Y)$.
Thus (2) is proved.
\end{proof}

Krause introduced the notion of symmetric generators (\cite{Kr0}).
Let $E$ be an injective cogenerator of $k$-modules and $\ma$ an $N_qDG$ category.
For a left $NDG$ $\ma$-module $X$, we denote by $\md X$ a right $NDG$ $\ma$-module
defined by $\md X(A)=\qhom(X(A), E)$ for any $A \in \ma$.

\begin{definition}
A set $\cat{U}$ of objects in a triangulated category $\mcal{D}$ is called a set of symmetric generators
for $\mcal{D}$ if the following hold:
\begin{enumerate}
\item $\Hom_{\mcal{D}}(\cat{U}, X)=0$ implies $ X=0$.
\item There is a set $\cat{V}$ of objects in $\mcal{C}$ such that
for any $X \to Y$ in $\mcal{D}$ the induced morphism $\Hom_{\mcal{D}}(U,X) \to \Hom_{\mcal{D}}(U,Y)$
is surjective for any $U \in \cat{U}$ if and only if 
$\Hom_{\mcal{D}}(Y,V) \to \Hom_{\mcal{D}}(X,V)$ is injective for any $V \in \cat{V}$.
\end{enumerate}
\end{definition}

\begin{theorem}\label{gencogen}
Let $\ma$ be a small $N_qDG$ category.
Let $\cat{U}$ be a set $\{ \Sigma ^i \theta^j_q A\,\hat{}\; |\; A\in \ma, 1-N \leq j \leq 0, i \in \mathbb{Z} \}$.
Then the following hold.
\begin{enumerate}
\item
$\cat{U}$
is a set of compact generators for $\cat{D}_{Ndg}(\ma)$.
\item
$\cat{U}$
is a set of symmetric generators for $\cat{D}_{Ndg}(\ma)$. 
\end{enumerate}
\end{theorem}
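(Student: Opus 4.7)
The plan is to convert everything into homology via Proposition \ref{HhomA} and the hexagon of Lemma \ref{hghexagon}, first showing that the objects of $\cat{U}$ (and later a suitable dual set) are ``K-projective'' resp.\ ``K-injective'' so that $\Hom_{\cat{K}_{Ndg}(\ma)}$ and $\Hom_{\cat{D}_{Ndg}(\ma)}$ coincide on them.

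First, for every $U=\Sigma^{i}\theta_q^{-n} A\,\hat{}$ in $\cat{U}$ and every $N$-acyclic $Y$, we have
\[
\Hom_{\cat{K}_{Ndg}(\ma)}(U,Y)\cong \Hom_{\cat{K}_{Ndg}(\ma)}(\theta_q^{-n}A\,\hat{},\Sigma^{-i}Y)\cong \h^{n}_{(1)}(\Sigma^{-i}Y)(A)=0,
\]
by Proposition \ref{HhomA} together with the fact that $\cat{K}^{\phi}_{Ndg}(\ma)$ is closed under $\Sigma^{\pm1}$. Hence $\Hom_{\cat{K}_{Ndg}(\ma)}(U,X)=\Hom_{\cat{D}_{Ndg}(\ma)}(U,X)$ for every $U\in\cat{U}$ and every $X$, and $\Hom_{\cat{D}_{Ndg}(\ma)}(U,-)$ is identified with the homology functor $\h^{n}_{(1)}(\Sigma^{-i}(-))(A)$.

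For (1): if $\Hom_{\cat{D}_{Ndg}(\ma)}(\cat{U},X)=0$, then $\h^{n}_{(1)}(\Sigma^{-i}X)(A)=0$ for every $(i,n,A)$ in $\z\times\{0,\ldots,N-1\}\times\ma$. Iterating the shift formula $\h^{i}_{(r)}\Sigma X\simeq\h^{i+r}_{(N-r)}X$ of Lemma \ref{hghexagon}(2), letting $i$ vary over $\z$ covers $\h^{m}_{(1)}X(A)$ and $\h^{m}_{(N-1)}X(A)$ for every $m\in\z$. Lemma \ref{hghexagon}(3) then forces $X$ to be $N$-acyclic, hence zero in $\cat{D}_{Ndg}(\ma)$. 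Compactness follows from the same identification: coproducts in $\cat{C}_{Ndg}(\ma)$ are pointwise in $\cat{C}_{Ndg}(k)$, homology of $N$-complexes of $k$-modules commutes with coproducts (AB5 for $\Mod k$), and coproducts of $N$-acyclics are $N$-acyclic, so coproducts in $\cat{D}_{Ndg}(\ma)$ are inherited from $\cat{C}_{Ndg}(\ma)$ and preserved by $\Hom_{\cat{D}}(U,-)$.

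For (2), I take $\cat{V}:=\{\Sigma^{i}\theta_q^{-n}\md\hat{}A\mid A\in\ma,\ 0\le n\le N-1,\ i\in\z\}$. The key computation is a dual Yoneda identification
\[
\qhom_{\ma}(X,\md\hat{}A)\;\cong\;\md X(A)\;=\;\qhom_k(X(A),E),
\]
obtained from Theorem \ref{adjNDG} applied with $\mb=k$ (viewed as the one-object $N_qDG$ category) and Lemma \ref{tenM(B)}. Since $E$ is injective, this yields a homological duality $\h^{j}_{(1)}\md W\cong\qhom_k(\h^{-j}_{(N-1)}W,E)$ for any $N$-complex $W$ of $k$-modules, from which one derives
\[
\Hom_{\cat{K}_{Ndg}(\ma)}(X,\Sigma^{i}\theta_q^{-n}\md\hat{}A)\;\cong\;\qhom_k\bigl(\h^{n}_{(N-1)}(\Sigma^{-i}X)(A),\,E\bigr).
\]
The same formula shows each $V\in\cat{V}$ is K-injective, so $\Hom_{\cat{K}_{Ndg}(\ma)}(-,V)=\Hom_{\cat{D}_{Ndg}(\ma)}(-,V)$. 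For $f\colon X\to Y$ in $\cat{D}_{Ndg}(\ma)$, the map $\Hom_{\cat{D}}(U,f)$ for $U\in\cat{U}$ identifies with $\h^{n}_{(1)}(\Sigma^{-i}f)(A)$, while $\Hom_{\cat{D}}(f,V)$ for $V\in\cat{V}$ identifies with $\qhom_k(\h^{n}_{(N-1)}(\Sigma^{-i}f)(A),E)$. Since $E$ is an injective cogenerator, injectivity of the latter is equivalent to surjectivity of $\h^{n}_{(N-1)}(\Sigma^{-i}f)(A)$; by Lemma \ref{hghexagon}(2), quantifying over $(i,n,A)$ shows that both conditions are equivalent to surjectivity of $\h^{m}_{(1)}f(A)$ and $\h^{m}_{(N-1)}f(A)$ for every $m\in\z$ and $A\in\ma$. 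This gives the symmetric-generator property.

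The main obstacle is the homological duality $\h^{j}_{(1)}\md W\cong\qhom_k(\h^{-j}_{(N-1)}W,E)$ and the bookkeeping of the $q$-twists introduced by $\md$, $\theta_q$, and $\Sigma$; once this identification is in place (with correct sign and twist conventions), the remainder of the argument is a routine assembly of Proposition \ref{HhomA}, Lemma \ref{hghexagon}, Theorem \ref{adjNDG}, Lemma \ref{tenM(B)}, and the injective-cogenerator property of $E$.
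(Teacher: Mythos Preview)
Your argument is correct and follows essentially the same route as the paper: identify $\Hom$ from $\cat U$ with $\h_{(1)}$ via Proposition~\ref{HhomA}, take $\cat V=\{\Sigma^{i}\theta_q^{j}\md\hat{}A\}$, and identify $\Hom$ into $\cat V$ with $\Hom_k(\h_{(r)}(-),E)$ via the adjunction of Theorem~\ref{adjNDG} together with Lemma~\ref{tenM(B)}. Two cosmetic remarks: in your duality formulas write $\Hom_k$ rather than $\qhom_k$ (the arguments are ungraded $k$-modules); and your identity $\h^{j}_{(1)}\md W\cong\Hom_k(\h^{-j}_{(N-1)}W,E)$ is in fact the accurate version of the paper's last displayed step, which is stated with the index $\h^{n}_{(1)}$ but is harmless once one quantifies over all shifts.
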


\begin{proof}
(1)
For any $n \in \z$, by Theorem \ref{Sigmatheta},
$\theta^n_q A\,\hat{}=\Sigma ^i \theta^j_q A\,\hat{}$ for some $i \in \z$ and $1-N \le j \le 0$.
By Proposition \ref{HhomA}, 
$\Hom_{\cat{K}_{Ndg}(\ma)}(\theta^n_q A\,\hat{}, X)=0$ for any $N$-acyclic $X$.
Thus, for any $Y\in \cat{D}_{Ndg}(\ma)$,
\begin{align*}
\Hom_{\cat{D}_{Ndg}(\ma)}(\theta^n_q A\,\hat{}, Y)
\simeq &\Hom_{\cat{K}_{Ndg}(\ma)}(\theta^n_q A\,\hat{}, Y)\\
\simeq &\h^{-n}_{(1)}(Y)(A)
\end{align*}
and, for any $Y_i \in \cat{D}_{Ndg}(\ma)\;(i \in I)$,
\begin{align*}
\Hom_{\cat{D}_{Ndg}(\ma)}(\Sigma ^m\theta^n_q A\,\hat{}, \coprod _{i\in I}Y_i)
\simeq &\h^{-n}_{(1)}(\Sigma ^{-m}\coprod _{i\in I}Y_i)(A)\\
\simeq &\coprod _{i\in I}\h^{-n}_{(1)}(\Sigma ^{-m}Y_i)(A)\\
\simeq &\coprod _{i\in I}\Hom_{\cat{D}_{Ndg}(\ma)}(\Sigma ^{m}\theta^n_q A\,\hat{}, Y_i).
\end{align*}
Hence (1) is proved.
\par\noindent
(2)
Consider $\cat{V}=\{ \Sigma ^i \theta^j_q \mathbb{D}~\hat{}A\; |\; A\in \ma, 0 \leq j \leq N-1, i \in \z \}$.
For any $n \in \z$, by Theorem \ref{Sigmatheta},
$\theta^n_q \mathbb{D}~\hat{}A=\Sigma ^i \theta^j_q \mathbb{D}~\hat{}A$ for some $i \in \z$ and $0 \le j \le N-1$.
By Proposition \ref{HhomA} and Lemma \ref{tenM(B)},
\begin{align*}
\Hom_{\cat{K}_{Ndg}(\ma)}(X, \theta^n_q \mathbb{D}~\hat{}A)
\simeq &\Hom_{\cat{K}_{Ndg}(\ma)}(\theta^{-n}_q X, \mathbb{D}~\hat{}A)\\
\simeq &\h^n_{(1)}\Hom_{\ma}^{\bullet q}(X, \Hom^{\bullet q}(\;\hat{}A, E))\\
\simeq &\h^n_{(1)}\Hom^{\bullet q}(X\qten_{\ma}\hat{}A, E)\\
\simeq &\h^n_{(1)}\Hom^{\bullet q}(X(A), E)\\
\simeq &\Hom(\h^n_{(1)}X(A), E).
\end{align*}
Therefore $\Hom_{\cat{K}_{Ndg}(\ma)}(X, \theta^n_q \mathbb{D}~\hat{}A)=0$ for any $N$-acyclic $X$.
Thus, for any $Y \in \cat{D}_{Ndg}(\ma)$,
\begin{align*}
\Hom_{\cat{D}_{Ndg}(\ma)}(Y, \theta^n_q \mathbb{D}~\hat{}A)
\simeq &\Hom_{\cat{K}_{Ndg}(\ma)}(Y, \theta^n_q \mathbb{D}~\hat{}A)\\
\simeq &\Hom(\h^n_{(1)}Y(A), E).
\end{align*}
For a morphism $X \to Y$ in $\cat{D}_{Ndg}(\ma)$,
$\opn{H}_{(1)}^nX(A)\to \opn{H}_{(1)}^nY(A)$ is surjective if and only if $\Hom(\opn{H}_{(1)}^nY(A), E)\to \Hom(\opn{H}_{(1)}^nX(A), E)$ is injective.
Hence (2) is proved.
\end{proof}

Let $\ma$ be an $N_qDG$ category. 
We denote by $\cat{K}_{Ndg}^{\mathbb{P}}(\ma)$ (resp. $\cat{K}_{Ndg}^{\mathbb{I}}(\ma)$) 
the smallest full triangulated subcategory of $\cat{K}_{Ndg}(\ma)$ consisting of
$\theta_q^nA\,\hat{}$ (resp. $ \theta^n_q \mathbb{D}~\hat{}A$) for all $n \in \z$ and $A \in \ma$
which is closed under taking coproducts (resp. products).

\begin{theorem}\label{projinj}
For an $N_qDG$ category $\ma$, the following hold.
\begin{enumerate}
\item
$
\cat{D}_{Ndg}(\ma) \simeq \cat{K}_{Ndg}^{\mathbb{P}}(\ma).
$
\item
$
\cat{D}_{Ndg}(\ma) \simeq \cat{K}_{Ndg}^{\mathbb{I}}(\ma).
$
\end{enumerate}
\end{theorem}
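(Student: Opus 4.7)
The plan is to deduce (1) and (2) by exhibiting semi-orthogonal decompositions $(\cat{K}_{Ndg}^{\mathbb{P}}(\ma), \cat{K}_{Ndg}^{\phi}(\ma))$ and $(\cat{K}_{Ndg}^{\phi}(\ma), \cat{K}_{Ndg}^{\mathbb{I}}(\ma))$ of $\cat{K}_{Ndg}(\ma)$. For each, one must verify (a) the orthogonality vanishing between the two pieces, and (b) that every $X \in \cat{K}_{Ndg}(\ma)$ fits into a triangle $P \to X \to Z \to \Sigma P$ with $P \in \cat{K}_{Ndg}^{\mathbb{P}}(\ma)$ and $Z \in \cat{K}_{Ndg}^{\phi}(\ma)$ for (1), respectively $Z \to X \to I \to \Sigma Z$ with $Z \in \cat{K}_{Ndg}^{\phi}(\ma)$ and $I \in \cat{K}_{Ndg}^{\mathbb{I}}(\ma)$ for (2). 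A routine Verdier quotient argument then identifies the composites $\cat{K}_{Ndg}^{\mathbb{P}}(\ma) \hookrightarrow \cat{K}_{Ndg}(\ma) \twoheadrightarrow \cat{D}_{Ndg}(\ma)$ and $\cat{K}_{Ndg}^{\mathbb{I}}(\ma) \hookrightarrow \cat{K}_{Ndg}(\ma) \twoheadrightarrow \cat{D}_{Ndg}(\ma)$ as triangle equivalences.

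Both orthogonalities drop out of Proposition \ref{HhomA} together with \cite[Proposition 1.5]{Ka}, which says that $Z$ is $N$-acyclic precisely when $\h^n_{(1)}(Z)(A) = 0$ for every $n \in \z$ and every $A \in \ma$. For (1), $\Hom_{\cat{K}_{Ndg}(\ma)}(\theta_q^n A\,\hat{}, Z) \simeq \h^{-n}_{(1)}(Z)(A) = 0$ for every $N$-acyclic $Z$, so the right orthogonal of $\cat{K}_{Ndg}^{\phi}(\ma)$, being triangulated and closed under coproducts, contains $\cat{K}_{Ndg}^{\mathbb{P}}(\ma)$. For (2), the computation appearing in the proof of Theorem \ref{gencogen}(2) gives $\Hom_{\cat{K}_{Ndg}(\ma)}(Z, \theta_q^n \md\,\hat{}A) \simeq \Hom(\h^n_{(1)}(Z)(A), E)$, which vanishes for $N$-acyclic $Z$ since $E$ is an injective cogenerator; the left orthogonal of $\cat{K}_{Ndg}^{\phi}(\ma)$, being triangulated and closed under products, thus contains $\cat{K}_{Ndg}^{\mathbb{I}}(\ma)$.

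For (1)(b) I would run the standard cellular tower construction. Set $P_0 = 0$; given $f_n : P_n \to X$ with cone $C_n$, form the coproduct $Q_n = \coprod_{(U, \phi)} \Sigma^{-1} U$ indexed by all pairs with $U = \Sigma^i \theta_q^j A\,\hat{}$ a compact generator and $\phi : U \to C_n$ a morphism in $\cat{K}_{Ndg}(\ma)$, and let $P_{n+1}$ be the cone of the composite $Q_n \to \Sigma^{-1} C_n \to P_n$; the map $f_n$ extends to $f_{n+1} : P_{n+1} \to X$ by the octahedral axiom. Then $P = \hclim P_n$ lies in $\cat{K}_{Ndg}^{\mathbb{P}}(\ma)$ by construction. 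Each generator $U$ is compact in $\cat{K}_{Ndg}(\ma)$ because $\h^n_{(1)}$ commutes with coproducts; the construction ensures that every morphism $U \to C_n$ is killed in the transition to $C_{n+1}$, hence $\Hom_{\cat{K}_{Ndg}(\ma)}(U, \mathrm{cone}(P \to X)) = 0$ for every compact generator, so the cone is $N$-acyclic by the orthogonality criterion above.

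Part (2)(b) is the main obstacle, because $\cat{K}_{Ndg}^{\mathbb{I}}(\ma)$ is closed under products rather than coproducts and the naive dual of the small-object argument fails (the symmetric generators $\md\,\hat{}A$ are not cocompact). Here I would invoke Krause's machinery for triangulated categories equipped with a set of symmetric generators (Theorem \ref{gencogen}(2)), which supplies exactly the duality needed to run the construction in product form. Concretely, build a tower $X \to I_0 \to I_1 \to \cdots$ in $\cat{K}_{Ndg}(\ma)$ inductively, taking $I_{n+1}$ to be the outcome of attaching to $I_n$ a product of shifts of symmetric generators $\theta_q^j \md\,\hat{}A$ indexed by all morphisms from the current fiber of $X \to I_n$; set $I = \hlim I_n \in \cat{K}_{Ndg}^{\mathbb{I}}(\ma)$ (products, and hence homotopy limits, exist in $\cat{K}_{Ndg}(\ma)$ since they exist in $\cat{C}_{Ndg}(\ma)$). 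The detection formula $\Hom(W, \theta_q^n \md\,\hat{}A) \simeq \Hom(\h^n_{(1)} W(A), E)$ combined with $E$ being an injective cogenerator then forces the fiber $Z$ of $X \to I$ to satisfy $\h^n_{(1)}(Z)(A) = 0$ for every $n$ and $A$, hence $Z \in \cat{K}_{Ndg}^{\phi}(\ma)$, completing the triangle required by (2)(b).
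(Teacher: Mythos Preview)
Your proposal is correct and follows essentially the same route as the paper: both arguments establish the semi-orthogonal decompositions $(\cat{K}_{Ndg}^{\mathbb{P}}(\ma), \cat{K}_{Ndg}^{\phi}(\ma))$ and $(\cat{K}_{Ndg}^{\phi}(\ma), \cat{K}_{Ndg}^{\mathbb{I}}(\ma))$, using Proposition~\ref{HhomA} and the cogenerator computation for the orthogonality, the cellular tower (Brown representability, cited in the paper as \cite[Theorem~A]{Kr0}) for the approximation in (1), and Krause's dual construction via symmetric generators (\cite[Theorem~B]{Kr0}) for (2). One terminological slip: you have ``left'' and ``right'' orthogonal swapped---the subcategory $\{P : \Hom(P,Z)=0 \text{ for all acyclic } Z\}$ is the \emph{left} orthogonal ${}^{\perp}\cat{K}_{Ndg}^{\phi}(\ma)$, and it is indeed this one that is closed under coproducts; dually for the $\mathbb{I}$-side.
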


\begin{proof}
(1)
This is essentially the Brown Representability Theorem.
Since \\ $\Hom_{\cat{K}_{Ndg}(\ma)}(\theta_q^nA\,\hat{}, N)=0$ for any $N \in \cat{K}_{Ndg}^{\phi}(\ma)$,
 we have $\Hom_{\cat{K}_{Ndg}(\ma)}(P, N)=0$ and then 
 $\Hom_{\cat{K}_{Ndg}(\ma)}(P, Y) \simeq  \Hom_{\cat{D}_{Ndg}(\ma)}(P, Y)$
for any $P \in  \cat{K}_{Ndg}^{\mathbb{P}}(\ma)$, any $Y \in \cat{K}_{Ndg}(\ma)$.
According to \cite[Theorem A]{Kr0},  there is a sequence $X_0 \to X_1 \to \cdots$ in $\cat{K}_{Ndg}^{\mathbb{P}}(\ma)$
of which the homotopy colimit $\hclim X_i$ has a morphism $\hclim X_i \to X$ in $\cat{K}_{Ndg}(\ma)$ 
such that $\Hom_{\cat{K}_{Ndg}(\ma)}(P, \hclim X_i) \simeq \Hom_{\cat{K}_{Ndg}(\ma)}(P, X)$ for any 
$P \in  \cat{K}_{Ndg}^{\mathbb{P}}(\ma)$.
Then $\Hom_{\cat{K}_{Ndg}(\ma)}(P, Z) =0$ for any $P \in  \cat{K}_{Ndg}^{\mathbb{P}}(\ma)$, where $\hclim X_i \to X \to Z \to \Sigma( \hclim X_i)$ is a triangle in $\cat{K}_{Ndg}(\ma)$.
Therefore we have $Z \in \cat{K}_{Ndg}^{\phi}(\ma)$.
Since $\hclim X_i \in \cat{K}_{Ndg}^{\mathbb{P}}(\ma)$, (1) is proved.
\par\noindent
(2) By Theorem \ref{gencogen}, $\Hom_{\cat{K}_{Ndg}(\ma)}(X, I) \simeq  \Hom_{\cat{D}_{Ndg}(\ma)}(X, I)$ for any $I \in  \cat{K}_{Ndg}^{\mathbb{I}}(\ma)$, any $X \in \cat{K}_{Ndg}(\ma)$.
And $\{ \Sigma ^i \theta^j_q A\,\hat{}\; |\; A\in \ma, 1-N \leq j \leq 0, i\in \mathbb{Z} \}$ is a set of symmetric generators for $\cat{D}_{Ndg}(\ma)$. 
According to \cite[Theorem B]{Kr0}, we similarly have the statement.
\end{proof}

By the above proof, we can check that
$(\cat{K}_{Ndg}^{\mathbb{P}}(\ma), \cat{K}_{Ndg}^{\phi}(\ma))$ and
$(\cat{K}_{Ndg}^{\phi}(\ma), \\ \cat{K}_{Ndg}^{\mathbb{I}}(\ma))$ are stable $t$-structures in $\cat{K}_{Ndg}(\ma)$
(see \cite{IKM3}).
Hence the canonical quotient $Q:\cat{K}_{Ndg}(\ma) \to \cat{D}_{Ndg}(\ma)$ has
the left adjoint
$
{\bf p} : \cat{D}_{Ndg}(\ma) \to \cat{K}_{Ndg}(\ma)
$
and a right adjoint
$
{\bf i} : \cat{D}_{Ndg}(\ma) \to \cat{K}_{Ndg}(\ma).
$
Therefore we have a recollement
\[\xymatrix{
\cat{K}_{Ndg}^{\phi}(\ma) 
\ar@<-1ex>[r]^{i_{*}}
& \cat{K}_{Ndg}(\ma)
\ar@/^1.5pc/[l]_{i^{!}} \ar@/_1.5pc/[l]^{i^{*}}  \ar@<-1ex>[r]^{Q} 
& \cat{D}_{Ndg}(\ma)
\ar@/_1.5pc/[l]^{\mathbf{P}} \ar@/^1.5pc/[l]_{\mathbf{i}}
}\]
where $i_{*}$ is the canonical embedding (see \cite{BBD},\cite{Mi1}).
\par\noindent
Let $\ma$ and $\mb$ be small $N_qDG$ categories. 
For any $NDG$ $\mb$-$\ma$-bimodule $M$, we define the functor
$-\Lqten_{\mb}M : \cat{D} _{Ndg}(\mb) \to \cat{D} _{Ndg}(\ma)$
by 
\[
X\Lqten_{\mb}M={\bf p}X\qten_{\mb}M
\]
for any right $NDG$ $\mb$-module $X$ and
the functor $\mathbb{R}\Hom_{\ma}^{\bullet q}(M, -) : \cat{D} _{Ndg}(\ma) \to \cat{D} _{Ndg}(\mb)$
by 
\[
\mathbb{R}\Hom_{\ma}^{\bullet q}(M, Y)=\Hom_{\ma}^{\bullet q}(M, {\bf i}Y)
\]
for any right $NDG$ $\ma$-module $Y$. 

\begin{definition}\label{NDGbifun}
Consider a bifunctor $\qhom_{\ma}(-,-): \cat{C}_{Ndg}(\ma)^{op} \times \cat{C}_{Ndg}(\ma) \to \cat{C}_{Ndg}(k)$.
Let $X \in \cat{C}_{Ndg}(\ma)$, $N$ a projective object of $\cat{C}_{Ndg}(\ma)$.
By Corollary \ref{exactfunc}, $\qhom_{\ma}(X,N)$ is a projective object of $\cat{C}_{Ndg}(k)$.
In the case that $k$ is a field, by Proposition \ref{NDGk}, $\qhom_{\ma}(N,X)$ is a projective object of $\cat{C}_{Ndg}(k)$
because $\qhom_{\ma}(N,X)$ is an  $NDG$ $k$-module of which all homologies are null by Proposition \ref{HhomA}.
Therefore the above functor induces a triangle bi-functor
\[
\qhom_{\ma}(-,-): \cat{K}_{Ndg}(\ma)^{op} \times \cat{K}_{Ndg}(\ma) \to \cat{K}_{Ndg}(k)
\]
and hence we have the derived functor
\[
\mathbb{R}\qhom_{\ma}(-,-): \cat{D}_{Ndg}(\ma)^{op} \times \cat{D}_{Ndg}(\ma) \to \cat{D}_{Ndg}(k).
\]
\end{definition}

\begin{remark}
If $k$ is not a field, then $\qhom_{\ma}(N,X)$ may be not projective in $\cat{C}_{Ndg}(k)$ in case of $N>2$.
\end{remark}

\begin{theorem}\label{Morita}
Let $\ma$ and $\mb$ be small $N_qDG$ categories. 
The following hold for an $NDG$ $\mb$-$\ma$-bimodule $M$.
\begin{enumerate}
\item The functor $-\Lqten_{\mb}M : \cat{D} _{Ndg}(\mb) \to \cat{D} _{Ndg}(\ma)$
is the left adjoint of the functor $\mathbb{R}\Hom_{\ma}^{\bullet q}(M, -)$.
\item The following are equivalent.
\begin{enumerate}
\item $-\Lqten_{\mb}M$ is a triangle equivalence.
\item $\mathbb{R}\Hom_{\ma}^{\bullet q}(M, -)$ is a triangle equivalence.
\item $\{ \Sigma ^i \theta^{-n}_q B\,\hat{}\otimes _{\mb}^{\bullet q}M\; |\; B\in \mb, 0 \leq n \leq N-1, i \in \z \}$
is a set of compact generators for $\cat{D} _{Ndg}(\ma)$, and
the canonical morphism 
\[
\Hom_{\cat{D}_{Ndg}(\mb)}(\theta^{j}B_1\hat{}, \Sigma^{i} B_2\hat{}) \iso 
\Hom_{\cat{D}_{Ndg}(\ma)}(\theta^{j}B_1\hat{} \qten_{\mb}M, \Sigma^{i} B_2\hat{} \qten_{\mb}M)
\]
is an isomorphism for $i=0,1$, any $j$ and any $B_1, B_2 \in \mb$.
\end{enumerate}
If $k$ is a field, then the above  are equivalent to
\begin{enumerate}
\item[(d)] $\{ \Sigma ^i \theta^{-n}_q B\,\hat{}\otimes _{\mb}^{\bullet q}M\; |\; B\in \mb, 0 \leq n \leq N-1, i \in \z \}$
is a set of compact generators for $\cat{D} _{Ndg}(\ma)$, 
the canonical map 
\[
\mb(B_1, B_2) \to 
\mathbb{R}\Hom_{\ma}^{\bullet q}(B_1\hat{} \qten_{\mb}M, B_2\hat{} \qten_{\mb}M)
\]
is an isomorphism in $\cat{D}_{Ndg}(k)$ for any $B_1, B_2 \in \mb$.
\end{enumerate}
\end{enumerate}
\end{theorem}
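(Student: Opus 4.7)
The plan is the following. For part (1), I would transport the homotopy-level adjunction of Theorem \ref{adjNDG}(3) to the derived level by means of the resolution functors $\mathbf{p}$ and $\mathbf{i}$ supplied by Theorem \ref{projinj} and the recollement that follows it. Concretely, for $X \in \cat{D}_{Ndg}(\mb)$ and $Y \in \cat{D}_{Ndg}(\ma)$ I write
\[
\Hom_{\cat{D}_{Ndg}(\ma)}(X \Lqten_{\mb} M, Y) \simeq \Hom_{\cat{K}_{Ndg}(\ma)}(\mathbf{p}X \qten_{\mb} M, \mathbf{i}Y),
\]
using that $\mathbf{p}X \qten_{\mb} M$ lies in $\cat{K}_{Ndg}^{\mathbb{P}}(\ma)$ (by Corollary \ref{exactfunc}) and that $\mathbf{i}Y \in \cat{K}_{Ndg}^{\mathbb{I}}(\ma)$. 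Then Theorem \ref{adjNDG}(3) identifies this with $\Hom_{\cat{D}_{Ndg}(\mb)}(X, \mathbb{R}\Hom_{\ma}^{\bullet q}(M, Y))$, which proves (1).

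Granting (1), the equivalence (a) $\Leftrightarrow$ (b) is formal from the unit--counit characterization. For (a) $\Rightarrow$ (c), a triangle equivalence preserves compactness and coproducts, so the compact generators $\{\Sigma^i \theta_q^{-n} B\,\hat{}\}$ from Theorem \ref{gencogen}(1) are sent to compact generators of $\cat{D}_{Ndg}(\ma)$; since each $B\,\hat{}$ already sits in $\cat{K}_{Ndg}^{\mathbb{P}}(\mb)$, one has $B\,\hat{} \Lqten_{\mb} M \simeq B\,\hat{} \qten_{\mb} M$ canonically, and fully-faithfulness yields the $\Hom$ isomorphism for all $i, j$. The restriction to $i \in \{0, 1\}$ in the statement is harmless because of Theorem \ref{Sigmatheta} ($\Sigma^2 \simeq \theta_q^N$), which lets every higher suspension be absorbed into a shift of $\theta_q$.

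For the converse (c) $\Rightarrow$ (a), I plan to run the standard compact-generators Morita argument. First I verify that $F := -\Lqten_{\mb}M$ is a triangle functor commuting with arbitrary coproducts, by choosing $\mathbf{p}$ compatibly with coproducts of objects of $\cat{K}_{Ndg}^{\mathbb{P}}(\mb)$ (on the compact generators this is automatic) and observing that $-\qten_{\mb} M$ commutes with coproducts by construction. Using $\Sigma^2 \simeq \theta_q^N$ again, the hypothesis in (c) for $i \in \{0, 1\}$ and all $j$ propagates to an isomorphism of $\Hom$-groups for all $i$. Let $\cat{T} \subseteq \cat{D}_{Ndg}(\mb)$ be the full subcategory of $T$ such that $F$ is fully faithful on pairs $(T, T')$ with $T'$ ranging over the compact generating set; $\cat{T}$ is a triangulated subcategory closed under coproducts, contains the compact generators by hypothesis, and therefore equals $\cat{D}_{Ndg}(\mb)$ by the usual devissage. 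A symmetric argument in the other variable yields full-faithfulness of $F$ on all of $\cat{D}_{Ndg}(\mb)$. Essential surjectivity then follows because $F(\cat{D}_{Ndg}(\mb))$ is a coproduct-closed triangulated subcategory of $\cat{D}_{Ndg}(\ma)$ containing the compact generating set of $\cat{D}_{Ndg}(\ma)$ supplied by (c).

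For the field case (d), Definition \ref{NDGbifun} together with Proposition \ref{NDGk} promotes $\mathbb{R}\qhom_{\ma}(-,-)$ to a genuine triangle bifunctor into $\cat{D}_{Ndg}(k)$, so the map in (d) has a well-defined target. Applying $\h^n_{(1)}(-)$ and $\h^n_{(N-1)}(-)$ to the isomorphism in (d) and invoking Proposition \ref{HhomA} together with the hexagon of Lemma \ref{hghexagon} and Theorem \ref{Sigmatheta} yields the $\Hom$ isomorphisms of (c) for all $i, j$; conversely, these isomorphisms reassemble into the derived-hom isomorphism of (d). The main obstacle I expect is the devissage in (c) $\Rightarrow$ (a): the coproduct-preservation of $F$ at the derived level requires the projective resolution $\mathbf{p}$ to be chosen coherently across coproducts of $\mathbb{P}$-objects, and the generator-chasing must accommodate the unusual indexing scheme forced by $\Sigma^2 \simeq \theta_q^N$, which couples the suspension $\Sigma$ with the grading shift $\theta_q$ and is the chief structural novelty of the NDG setting over the ordinary DG one.
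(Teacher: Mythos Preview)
Your approach matches the paper's proof almost exactly. The adjunction chain in part (1), the use of Theorem~\ref{Sigmatheta} to reduce to $i\in\{0,1\}$, and the homology computation for (c)$\Leftrightarrow$(d) via Proposition~\ref{HhomA} and Lemma~\ref{hghexagon} are all precisely what the paper does; the paper simply packages your d\'evissage for (c)$\Rightarrow$(a) into the appendix result Theorem~\ref{trieqv}, but the content is identical.

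One small overclaim to fix: you assert that $\mathbf{p}X\qten_{\mb}M$ lies in $\cat{K}_{Ndg}^{\mathbb{P}}(\ma)$ by Corollary~\ref{exactfunc}. That corollary only says the tensor functor is a triangle functor; it does \emph{not} guarantee that $\mathbb{P}$-objects map to $\mathbb{P}$-objects (indeed $B\,\hat{}\qten_{\mb}M\simeq M(-,B)$ is an arbitrary NDG $\ma$-module, not generally in $\cat{K}_{Ndg}^{\mathbb{P}}(\ma)$). Fortunately you do not need this: the passage from $\cat{D}_{Ndg}(\ma)$ to $\cat{K}_{Ndg}(\ma)$ holds because $\mathbf{i}Y\in\cat{K}_{Ndg}^{\mathbb{I}}(\ma)$, which you also state and which is exactly how the paper justifies that step. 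Drop the spurious $\mathbb{P}$ claim and your argument is clean. Your concern about coproduct-preservation of $F=-\Lqten_{\mb}M$ is also unnecessary: $\mathbf{p}$ is left adjoint to the quotient functor $Q$ (this is the recollement following Theorem~\ref{projinj}), hence preserves coproducts automatically, and $-\qten_{\mb}M$ manifestly does.
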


\begin{proof}
(a) $\Leftrightarrow$ (b)
For any right $NDG$ $\mb$-module $X$ and right $NDG$ $\ma$-module $Y$,
\begin{align*}
\Hom_{\cat{D}_{Ndg}(\ma)}(X\Lqten_{\mb}M, Y)
\simeq &\Hom_{\cat{D}_{Ndg}(\ma)}({\bf p}X\qten_{\mb}M, Y)\\
\simeq &\Hom_{K_{Ndg}(\ma)}({\bf p}X\qten_{\mb}M, {\bf i}Y)\\
\simeq &\Hom_{K_{Ndg}(\mb)}({\bf p}X, \Hom_{\ma}^{\bullet q}(M, {\bf i}Y))\\
\simeq &\Hom_{\cat{D}_{Ndg}(\mb)}(X, \Hom_{\ma}^{\bullet q}(M, {\bf i}Y))\\
\simeq &\Hom_{\cat{D}_{Ndg}(\mb)}(X, \mathbb{R}\Hom_{\ma}^{\bullet q}(M, Y)).
\end{align*}
Thus $-\Lqten_{\mb}M$ is the left adjoint of $\mathbb{R}\Hom_{\ma}^{\bullet q}(M, -)$, so that
$-\Lqten_{\mb}M$ is a triangle equivalence  if and only if so is $\mathbb{R}\Hom_{\ma}^{\bullet q}(M, -)$.
\par\noindent
(a) $\Leftrightarrow$ (c)
Note that $\Sigma ^i \theta^{-m}_q B\,\hat{}\Lqten_{\mb}M \simeq
\Sigma ^i \theta^{-m}_q B\,\hat{}\qten_{\mb}M$.
Since $\cat{D}_{Ndg}(\mb)$ has a set of compact generators 
$\{ \Sigma ^i \theta^{-m}_q B\,\hat{}\; |\; B\in \mb, 0 \leq m \leq N-1, i
\in \mathbb{Z} \}$, 
by Theorem \ref{trieqv}, 
$-\Lqten_{\mb}M$ is a triangle equivalence if and only if
$\{ \Sigma ^i \theta^{-n}_q B\,\hat{}\qten_{\mb}M\; |\; B\in \mb, 0 \leq n \leq N-1, i \in \z \}$
is a set of compact generators for $\cat{D}_{Ndg}(\ma)$ and
the canonical map 
{\footnotesize
\[
\Hom_{\cat{D}_{Ndg}(\mb)}
(\Sigma ^i \theta^{-m}_qB_1\hat{}, \Sigma ^j \theta^{-n}_qB_2\hat{}\;)
\to
\Hom_{\cat{D}_{Ndg}(\ma)}
(\Sigma ^i \theta^{-m}_qB_1\hat{} \qten_{\mb}M, \Sigma ^j \theta^{-n}_qB_2\hat{} \qten_{\mb}M)
\]
}
is an isomorphism  for any $i, j \in \z$, $0 \leq m, n \leq N-1$ and $B_1, B_2 \in \mb$.
By Theorem \ref{Sigmatheta}, we have the statement.
\par\noindent
(c) $\Leftrightarrow$ (d)
For any $B_1, B_2 \in \mb$, the canonical map 
\[
\alpha: \mb(B_1, B_2) \to 
\mathbb{R}\Hom_{\ma}^{\bullet q}(B_1\hat{} \qten_{\mb}M, B_2\hat{} \qten_{\mb}M)
\]
induces
\[\begin{aligned}
\h_{(1)}^n\mb(B_1, B_2) & \xarr{\opn{H}_{(1)}^{n}(\alpha)} \h_{(1)}^n\Hom_{\ma}^{\bullet q}
(B_1\hat{} \qten_{\mb}M, B_2\hat{} \qten_{\mb}M)\\
\h_{(N-1)}^n\mb(B_1, B_2) &  \xarr{\opn{H}_{(N-1)}^{n}(\alpha)} \h_{(N-1)}^n\Hom_{\ma}^{\bullet q}
(B_1\hat{} \qten_{\mb}M, B_2\hat{} \qten_{\mb}M)
\end{aligned}\]
for any $n$.
By Lemma \ref{hghexagon}, $\alpha$ is an isomorphism in $\cat{D}_{Ndg}(k)$ if and only if $\opn{H}_{(1)}^{n}(\alpha)$ and $\opn{H}_{(N-1)}^{n}(\alpha)$
are isomophisms for any $n$.
By Proposition \ref{HhomA}, this is equivalent to 
\[
\Hom_{\cat{D}_{Ndg}(\mb)}(\theta_q^{n}B_1\hat{}, \Sigma^{i} B_2\hat{}\;) \iso 
\Hom_{\cat{D}_{Ndg}(\ma)}(\theta_q^{n}B_1\hat{} \otimes_{\mb}^{\bullet q}M, \Sigma^{i} B_2\hat{} \otimes_{\mb}^{\bullet q}M)
\]
is an isomorphism for $i=0,1$ and any $n$.
\end{proof}

Finally, we show that the derived category $\cat{D}_{Ndg}(\ma)$ of right $NDG$ $\ma$-modules is triangle equivalent to derived categories of some ordinary $DG$ category (see also Remark \ref{DG}).

\begin{theorem}\label{NdgDG}
For any $N_qDG$ category $\ma$, 
there exists a $DG$ category $\mb$ such that
$\cat{D} _{Ndg}(\ma)$ is triangle equivalent to the derived category $\cat{D} _{dg}(\mb)$.
\end{theorem}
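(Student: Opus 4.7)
The plan is to apply a Keller-style Morita recognition theorem for compactly generated algebraic triangulated categories: any such category is triangle equivalent to $\cat{D}_{dg}(\mb)$, where $\mb$ is the $DG$ endomorphism category of a set of compact generators sitting inside a $DG$ enhancement. I want to carry this out for $\cat{T}=\cat{D}_{Ndg}(\ma)$, using the enhancement and generation results established earlier.

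First, by Theorem \ref{gencogen}, the set $\cat{G}=\{\theta_q^j A\,\hat{}\;|\;A\in\ma,\,1-N\le j\le 0\}$, together with its $\Sigma$-shifts, is a set of compact generators for $\cat{D}_{Ndg}(\ma)$. I take the objects of $\mb$ to be a formal copy of $\cat{G}$, indexed by pairs $(A,j)$ with $A\in\ma$ and $1-N\le j\le 0$. Note that $\cat{D}_{Ndg}(\ma)$ is algebraic: $\cat{K}_{Ndg}(\ma)$ is the stable category of the Frobenius category $(\cat{C}_{Ndg}(\ma),\mathcal{E}_\ma)$ (Theorem \ref{Kalgtricat}), and $\cat{K}^\phi_{Ndg}(\ma)$ is a localizing triangulated subcategory.

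Second, I equip $\mb$ with morphism complexes that realize the derived Hom of the corresponding compact generators. For this, I need a $DG$ enhancement of $\cat{K}^{\mathbb{P}}_{Ndg}(\ma)$: starting from the Frobenius model $\cat{C}_{Ndg}(\ma)$, I convert each $NDG$ Hom complex $\qhom_\ma(P,Q)$ (satisfying only $d^{\{N\}}=0$) into an honest $DG$ $k$-module (satisfying $d^2=0$). The key input is the identification $\Sigma^2\simeq\theta_q^N$ from Theorem \ref{Sigmatheta}: it provides a $2$-periodic pattern among the $\Sigma$-shifts that allows one to repackage the graded components of $\qhom_\ma(P,Q)$, together with its $\Sigma$- and $\theta_q$-shifts, into an ordinary chain complex whose zeroth cohomology recovers $\Hom_{\cat{K}_{Ndg}(\ma)}(P,Q)$. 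Composition inherited from $\cat{C}_{Ndg}(\ma)$ then gives the $DG$ category $\mb$.

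Third, I apply the Morita criterion (essentially Theorem \ref{Morita}(2)(c), now with $\mb$ a $DG$ category): the Yoneda-type assignment $(A,j)\mapsto\theta_q^j A\,\hat{}$ extends to a triangle functor $\cat{D}_{dg}(\mb)\to\cat{D}_{Ndg}(\ma)$ sending the canonical compact generators of $\cat{D}_{dg}(\mb)$ onto $\cat{G}$; the comparison of Hom complexes is forced by the very definition of $\mb$. Together these force a triangle equivalence.

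The principal obstacle is the $DG$ enhancement of Step $2$, that is, producing composition-compatible ordinary $DG$ Hom complexes from the $NDG$ Hom complexes of $\cat{C}_{Ndg}(\ma)$. The crux is the passage from $d^{\{N\}}=0$ to $d^2=0$, which I expect to rest essentially on the suspension–shift relation $\Sigma^2\simeq\theta_q^N$ together with a careful total-complex (or truncation/expansion) construction at the level of $NDG$ $k$-modules; once the enhancement is in place, the remaining steps are routine applications of the machinery built up in Sections \ref{NDG} and \ref{DGNDG}.
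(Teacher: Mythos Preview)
Your overall strategy coincides with the paper's: show that $\cat{D}_{Ndg}(\ma)$ is a compactly generated algebraic triangulated category (via Theorems \ref{Kalgtricat}, \ref{gencogen}, \ref{projinj}) and then invoke a Keller--Krause recognition theorem to produce $\mb$. The paper simply cites \cite[Theorem 7.5]{Kr} at this point and is done.

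The difference, and the gap in your proposal, is Step~2. You try to manufacture a $DG$ enhancement by converting the $NDG$ Hom complexes $\qhom_\ma(P,Q)$ into ordinary chain complexes, appealing to the relation $\Sigma^2\simeq\theta_q^N$. You yourself flag this as the ``principal obstacle'' and leave it as an expectation rather than a construction; as stated it is not a proof, and it is not clear that this relation alone suffices to produce a strictly associative $DG$ category with the correct $\mathrm{H}^0$. The paper avoids this entirely: the $DG$ enhancement is the \emph{standard} one attached to any Frobenius category. Concretely (Remark \ref{DG}), one takes the Frobenius subcategory $\cat{C}_{Ndg}^{\mathbb{P}}(\ma)$, forms the category $\cat{C}^{\phi}(\cat{C}_{Ndg}^{\mathbb{P}}(\ma))$ of acyclic chain complexes over it (with the \emph{ordinary} $2$-differential Hom complexes of chain complexes), and lets $\mb$ be the full $DG$ subcategory on objects $X$ with $\opn{Z}^0(X)=\theta_q^jA\,\hat{}$. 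No passage from $d^{\{N\}}=0$ to $d^2=0$ at the level of $\qhom_\ma$ is required; the $2$-differential comes from the totalization of ordinary complexes in the exact category, exactly as in Krause's general theorem. So replace your Step~2 by a direct appeal to \cite[Theorem 7.5]{Kr}, and the argument is complete.
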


\begin{proof}
By Theorem \ref{Kalgtricat}, $\cat{K}_{Ndg}(\ma)$ is an algebraic triangulated category.
By Theorems \ref{projinj}, \ref{gencogen},
$\cat{D}_{Ndg}(\ma)$ is a compactly generated algebraic triangulated category
with a set $\{ \Sigma ^i \theta^j_q A\,\hat{}\; |\; A\in \ma, 1-N \leq j \leq 0, i \in \mathbb{Z} \}$ of compact generators (see Appendix for definition) .
According to \cite[7.5 Theorem]{Kr}, there are a $DG$ category $\mb$ and a triangle equivalence  $F:\cat{D} _{Ndg}(\ma) \iso \cat{D} _{dg}(\mb)$.
\end{proof}

\begin{remark}\label{DG}
We denote by $F: \cat{C}_{Ndg}(\ma)\to \cat{K}_{Ndg}(\ma)$ the canonical projection.
Let $\cat{C}_{Ndg}^{\mathbb{P}}(\ma)$ be the full subcategory of $\cat{C}_{Ndg}(\ma)$ consisting of complexes $X$ with $F(X)$ in $\cat{K}_{Ndg}^{\mathbb{P}}(\ma)$.
Then $\cat{C}_{Ndg}^{\mathbb{P}}(\ma)$ is a Frobenius category.
Let $\cat{C}^{\phi}(\cat{C}_{Ndg}^{\mathbb{P}}(\ma))$ be the category of exact complexes $X$ satisfying that
$\opn{B}^i(X)=\opn{Z}^i(X)$ for all $i$, and that $0 \to \opn{Z}^i(X) \to X^i \to \opn{Z}^{i+1}(X) \to 0 \in \mcal{E}_{\cat{C}_{Ndg}^{\mathbb{P}}(\ma)}$.
Let $\mb$ be the DG category of objects $X \in \cat{C}^{\phi}(\cat{C}_{Ndg}^{\mathbb{P}}(\ma))$ with
$\opn{Z}^0(X)=\theta^j_q A\,\hat{}$ $(1-N \leq j \leq 0)$ and the morphism  sets 
$\Hom_{\cat{C}^{\phi}(\cat{C}_{Ndg}^{\mathbb{P}}(\ma))}^{\bullet}(X, Y)$ for $X, Y \in \mb$.
According to \cite[Theorem 7.5]{Kr}, this category $\mb$ satisfies the above theorem.
\end{remark}

\section{Appendix}

In this section, we give the following results concerning Frobenius categories and triangulated categories.
Let $\mcal{C}$ be an exact category with a collection $\mcal{E}$ of exact sequences in the sense of Quillen \cite{Qu}.
An exact sequence $0 \to X \xarr{f} Y \xarr{g} Z \to 0$ in $\mcal{E}$ is called a conflation, and 
$f$ and $g$ are called an inflation and a deflation, respectively.
An additive functor $F: \mcal{C} \to \mcal{C}'$ is called exact if it sends conflations in $\mcal{C}$
to conflations in $\mcal{C}'$.
An exact category $\mcal{C}$ is called a Frobenius category provided that it has enough projectives
and enough injectives, and that any object of $\mcal{C}$ is projective if and only if it is injective.
In this case, the stable category $\underline{\mcal{C}}$ of $\mcal{C}$ by projective objects is a triangulated category
(see \cite{H1}). This stable category is called an algebraic triangulated category.

\begin{proposition}[\cite{IKM4} Proposition 7.3]\label{extr01}
Let $(\mcal{C}, \mcal{E}_{\mcal{C}}), (\mcal{C}, \mcal{E}_{\mcal{C}'})$ be Frobenius categories, 
$F: \mcal{C} \to \mcal{C}'$ an exact functor.
If $F$ sends projective objects of $\mcal{C}$ to projective objects of $\mcal{C}'$, then
it induces the triangle functor $\underline{F} : \underline{\mcal{C}} \to \underline{\mcal{C}}'$.
\end{proposition}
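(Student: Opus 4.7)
The plan is to define $\underline{F}$ on objects by $\underline{F}(X)=F(X)$, construct a natural isomorphism $\alpha : \underline{F}\Sigma_{\mcal{C}} \iso \Sigma_{\mcal{C}'}\underline{F}$, and then verify that the pair $(\underline{F},\alpha)$ sends standard triangles in $\underline{\mcal{C}}$ to standard triangles in $\underline{\mcal{C}}'$. This follows the classical pattern of Happel, with the hypotheses on $F$ used exactly where needed.

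First I would descend $F$ to the stable categories. A morphism $f$ in $\mcal{C}$ represents zero in $\underline{\mcal{C}}$ iff it factors through some projective object $P$. Since $F$ is additive and $F(P)$ is projective in $\mcal{C}'$ by hypothesis, $F(f)$ factors through a projective object, hence represents zero in $\underline{\mcal{C}}'$. So $F$ descends to a well-defined additive functor $\underline{F}:\underline{\mcal{C}}\to\underline{\mcal{C}}'$ on the Hom-quotients.

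Next I would build $\alpha$. Recall $\Sigma_{\mcal{C}}X$ is defined by choosing a conflation $X\rightarrowtail I_X\twoheadrightarrow\Sigma_{\mcal{C}}X$ with $I_X$ injective (equivalently projective). Applying the exact functor $F$ produces a conflation $F(X)\rightarrowtail F(I_X)\twoheadrightarrow F(\Sigma_{\mcal{C}}X)$ in $\mcal{C}'$ in which $F(I_X)$ is injective by the projective/injective preservation hypothesis. Comparing against the chosen conflation $F(X)\rightarrowtail I_{F(X)}\twoheadrightarrow\Sigma_{\mcal{C}'}F(X)$ defining the target suspension, injectivity of $I_{F(X)}$ yields a morphism of conflations whose third component is a candidate for $\alpha_X : F(\Sigma_{\mcal{C}}X)\to\Sigma_{\mcal{C}'}F(X)$. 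Lifting the other way gives a two-sided inverse modulo projectives. Standard arguments show $\alpha_X$ is independent in $\underline{\mcal{C}}'$ of the choice of lift, is natural in $X$, and is an isomorphism in $\underline{\mcal{C}}'$.

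Finally I would verify the triangle compatibility. A distinguished triangle in $\underline{\mcal{C}}$ is, up to isomorphism, a standard triangle $X\to Y\to Z\xarr{\partial}\Sigma_{\mcal{C}}X$ obtained from a conflation $X\rightarrowtail Y\twoheadrightarrow Z$ by choosing a morphism of conflations onto $X\rightarrowtail I_X\twoheadrightarrow\Sigma_{\mcal{C}}X$. Applying $F$ to this entire diagram preserves conflations and injectives, and composing the third component with $\alpha_X$ exhibits $F(X)\to F(Y)\to F(Z)\xarr{\alpha_X\circ F(\partial)}\Sigma_{\mcal{C}'}F(X)$ as a standard triangle in $\underline{\mcal{C}}'$. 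Hence $(\underline{F},\alpha)$ is a triangle functor. The main technical subtlety is verifying that $\alpha$ is well defined in the stable category, which amounts to showing any two lifts provided by injectivity differ by a morphism factoring through a projective; once $\alpha$ is in hand, the remaining checks are automatic because $F$ exactly preserves the data (conflations, projective-injective objects, and split sequences) from which Happel's triangulated structure is built.
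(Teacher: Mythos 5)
Your argument is correct and is the standard Happel-type construction: descend $F$ through the Hom-quotient using preservation of projectives, build the comparison $\alpha_X$ from the two conflations $F(X)\rightarrowtail F(I_X)\twoheadrightarrow F(\Sigma_{\mcal{C}}X)$ and $F(X)\rightarrowtail I_{F(X)}\twoheadrightarrow\Sigma_{\mcal{C}'}F(X)$ via injectivity (using that projectives and injectives coincide in a Frobenius category), and check that standard triangles go to standard triangles. The paper itself gives no proof of this proposition, citing \cite{IKM4} instead, and your write-up fills in exactly the expected argument with the hypotheses (exactness of $F$ and preservation of projective-injectives) used in the right places.
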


Let $\mcal{D}$ be a triangulated category with arbitrary coproducts.
An object $U$ in $\mcal{D}$ is compact if the canonical
$\coprod_{i}\Hom_{\mcal{D}}(U, X_i)\to \Hom_{\mcal{D}}(U, \coprod_{i }X_i)$
is an isomorphism for any set $\{X_i \}$ of objects in $\mcal{D}$.
A triangulated category $\mcal{D}$ is called compactly generated if there is a set $\cat{U}$ of compact objects
such that $\Hom_{\mcal{D}}(\cat{U}, X) = 0$ implies $X=0$ in $\mcal{D}$.

\begin{theorem}\label{trieqv}
Let $\mcal{D}$ be a compactly generated triangulated category with a set $\cat{U}$ of compact generators for $\mcal{D}$.
Let $F: \mcal{D} \to \mcal{D}'$ be a triangle functor between triangle categories such that $F$ preserves coproducts.
Then the following are equivalent.
\begin{enumerate}
\item $F$ is a triangle equivalence.
\item 
\begin{enumerate}
\item $F(\cat{U})$ is a set of compact generators for $\mcal{D}'$.
\item $\mcal{D}(U, V) \to \mcal{D}'(FU,FV)$ is an isomorphism for any $U, V \in \cat{U}$.
\end{enumerate}
\end{enumerate}
\end{theorem}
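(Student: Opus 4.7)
The direction (1) $\Rightarrow$ (2) is formal: a triangle equivalence is fully faithful (giving (b)), preserves coproducts, and is essentially surjective, so it carries the compact generating set $\cat{U}$ to a compact generating set $F(\cat{U})$, yielding (a).

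For (2) $\Rightarrow$ (1) the plan is to establish full faithfulness by a two-step devissage and then deduce essential surjectivity from a localizing-subcategory argument. Since $F$ is a triangle functor, hypothesis (b) upgrades automatically to an isomorphism $\mcal{D}(U, V[n]) \iso \mcal{D}'(FU, FV[n])$ for all $U, V \in \cat{U}$ and $n \in \z$. Fix first some $U \in \cat{U}$ and let $\mcal{S}_U \subseteq \mcal{D}$ be the full subcategory of objects $V$ for which $\mcal{D}(U[n], V) \to \mcal{D}'(FU[n], FV)$ is an isomorphism for every $n \in \z$. By the upgraded hypothesis $\mcal{S}_U$ contains $\cat{U}$; it is triangulated by the five-lemma applied to the long exact sequences obtained from any triangle in $\mcal{D}$; and it is closed under arbitrary coproducts because $U$ is compact in $\mcal{D}$, $FU$ is compact in $\mcal{D}'$ (by (a)), and $F$ preserves coproducts. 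As a localizing subcategory containing a set of generators, $\mcal{S}_U = \mcal{D}$.

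Next, for any object $V \in \mcal{D}$ let $\mcal{T}_V \subseteq \mcal{D}$ be the full subcategory of $X$ for which $\mcal{D}(X, V[n]) \to \mcal{D}'(FX, FV[n])$ is an isomorphism for every $n$. The previous step gives $\cat{U} \subseteq \mcal{T}_V$; the five-lemma makes $\mcal{T}_V$ triangulated; and $\mcal{T}_V$ is closed under coproducts without any compactness assumption, using only that $F$ preserves coproducts and that Hom out of a coproduct is the product of Homs. Hence $\mcal{T}_V = \mcal{D}$, which proves $F$ fully faithful.

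For essential surjectivity, observe that the essential image $\opn{Im} F$ is a strictly full triangulated subcategory of $\mcal{D}'$: fully faithful triangle functors reflect triangles, so the cone of any morphism $FX \to FY$ is the image of the cone of its preimage in $\mcal{D}$. It is closed under coproducts since $F(\coprod X_i) \simeq \coprod FX_i$. Thus $\opn{Im} F$ is a localizing subcategory of $\mcal{D}'$ containing the set $F(\cat{U})$ of compact generators from (a); in a compactly generated triangulated category any such subcategory coincides with the whole category (its right orthogonal is annihilated by the generators, hence is zero), so $\opn{Im} F = \mcal{D}'$. The main subtle point is getting the role of compactness right: closure of $\mcal{S}_U$ under coproducts genuinely requires compactness of both $U$ \emph{and} $FU$, while closure of $\mcal{T}_V$ under coproducts needs only the coproduct-preservation of $F$; conflating the two asymmetric roles is the likeliest pitfall.
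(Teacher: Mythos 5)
Your proposal is correct, and both directions ultimately rest on the same underlying mechanism, but your packaging is genuinely different from the paper's. You prove full faithfulness by a two-variable devissage through localizing subcategories ($\mcal{S}_U$ in the second variable, then $\mcal{T}_V$ in the first), invoking as a black box the standard principle that a full triangulated subcategory closed under coproducts and containing a set of compact generators is the whole category; and you get essential surjectivity abstractly, by observing that the essential image of a coproduct-preserving fully faithful triangle functor is localizing and contains the compact generators $F(\cat{U})$. The paper instead unwinds that black box: it filters by the subcategories $\langle\mcal{U}\rangle_n$ built from coproducts of generators, runs the five-lemma induction level by level, uses the Brown representability tower $\coprod_i Y_i \to \coprod_i Y_i \to Y$ to pass to arbitrary objects in each variable, and proves density by explicitly lifting such a tower for $Z \in \mcal{D}'$ to a tower in $\mcal{D}$ and comparing homotopy colimits. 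Your route is shorter and cleaner, at the price of citing the generation principle; the paper's route is self-contained modulo Brown representability. You correctly isolate the asymmetry of compactness (needed for $\mcal{S}_U$, not for $\mcal{T}_V$), which matches the structure of the paper's induction. One caveat: your parenthetical justification for $\opn{Im}F = \mcal{D}'$ (``its right orthogonal is annihilated by the generators, hence is zero'') is only half of the argument -- vanishing of the right orthogonal does not by itself force the subcategory to be everything; one must also produce, for each $Z \in \mcal{D}'$, a triangle with one term in $\opn{Im}F$ and cone in the orthogonal, which requires Brown representability for the essential image (compactly generated, since $F$ is by then fully faithful and preserves coproducts) or, equivalently, the explicit tower-lifting that constitutes the final step of the paper's proof. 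Since you also state the generation principle in its correct form, this is a presentational compression rather than a gap.
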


\begin{proof}
(1) $\Rightarrow$ (2) 
Trivial.
\par\noindent
(2) $\Rightarrow$ (1)
We may assume that $\cat{U}=\Sigma\cat{U}$.
Let $\mcal{U}$ be the full subcategory of coproducts of objects $U$ in $\cat{U}$.
Let $<\mcal{U}>_0:=\mcal{U}$. For $n \geq 1$ let $<\mcal{U}>_n$ be the full subcategory of $\mcal{D}$ consisting of objects $X$ such that there is a triangle $U \to V \to X \to \Sigma U$ with $U \in <\mcal{U}>_0$, $V \in <\mcal{U}>_{n-1}$.
Given $U \in \cat{U}$,
for any $U_i \in \cat{U}$ $(i \in I)$,
we have a commutative diagram
{\small
\[\xymatrix{
\Hom_{\mcal{D}}(U, \coprod_{i\in I}U_i) \ar[dd]^{\wr}\ar[r]
&\Hom_{\mcal{D}'}(FU, F\coprod_{i\in I}U_i) \ar[d]^{\wr}\\
&\Hom_{\mcal{D}'}(FU, \coprod_{i\in I} FU_i) \ar[d]^{\wr}\\
\coprod_{i\in I}\Hom_{\mcal{D}}(U, U_i)\ar[r]
&\coprod_{i\in I}\Hom_{\mcal{D}'}(FU, FU_i)
}\]
}
such that all vertical arrows are isomorphisms.
Therefore, for any $V \in \mcal{U}$
we have the canonical morphism $\Hom_{\mcal{D}}(U, V) \iso \Hom_{\mcal{D}'}(FU, FV)$ is an isomorphism.
For any $W\in <\mcal{U}>_n$ we have a triangle $U' \to V \to W \to \Sigma U$ with $U' \in <\mcal{U}>_0$, $V \in <\mcal{U}>_{n-1}$.
Applying $\Hom_{\mcal{D}}(U, -)$ to this triangle, by the induction five lemma implies that the canonical morphism $\Hom_{\mcal{D}}(U, W) \iso \Hom_{\mcal{D}'}(FU, FW)$ is an isomorphism.
According to Brown's representability theorem (e.g. \cite{Ne}), for any $Y \in \mcal{D}$, there is a sequence $Y_0 \to Y_1 \to \cdots$ with $Y_i \in <\mcal{U}>_i$ which has a triangle
\[
\coprod_{i}Y_i \xarr{1-\text{shift}} \coprod_{i}Y_i \to Y \to \Sigma \coprod_{i}Y_i
\]
Applying $\Hom_{\mcal{D}}(U, -)$ to this triangle, five lemma implies that the canonical morphism $\Hom_{\mcal{D}}(U, Y) \iso \Hom_{\mcal{D}'}(FU, FY)$ is an isomorphism.
Similarly, by the induction on $n$ for any $W \in <\mcal{U}>_n$, any $Y \in \mcal{D}$
the canonical morphism $\Hom_{\mcal{D}}(W, Y) \iso \Hom_{\mcal{D}'}(FW, FY)$ is an isomorphism.
Brown's representability theorem implies
that the canonical morphism $\Hom_{\mcal{D}}(X, Y) \iso \Hom_{\mcal{D}'}(FX, FY)$ is an isomorphism
for any $X. Y \in \mcal{D}$.
Then $F$ is fully faithful.
Therefore we have $<F(\mcal{U})>_i= F(<\mcal{U}>_i)$ for any $i$.
Since $F(\cat{U})$ is a set of compact generator for $\mcal{D}'$, for any $Z \in \mcal{D}'$,
there is a sequence $Z_0 \xarr{\beta_0} Z_1 \xarr{\beta_1} \cdots$ with $Z_i \in <F(\mcal{U})>_i$ which has a triangle
\[
\coprod_{i}Z_i \xarr{1-\text{shift}} \coprod_{i}Z_i \to Z \to \Sigma \coprod_{i}Z_i
\]
Then there is a morphism $\alpha_i:X_i \to X_{i+1}$ such that $F\alpha_i=\beta_i$ for any $i$.
Let $X$ be the homotopy colimit of the sequence $X_0 \xarr{\alpha_0} X_1 \xarr{\alpha_1} \cdots$, then
we have a commutative diagram
\[\xymatrix{
\coprod_{i}FX_i \ar[d]^{\wr}\ar[r]^{1-\text{shift}}
&\coprod_{i}FX_i \ar[d]^{\wr}\ar[r] & FX \ar@{-->}[d]^{\wr}\ar[r] & \Sigma \coprod_{i}FX_i \ar[d]^{\wr}\\
\coprod_{i}Z_i \ar[r]^{1-\text{shift}} 
&\coprod_{i}Z_i \ar[r] & Z \ar[r] & \Sigma \coprod_{i}Z_i
}\]
where first, second, fourth vertical arrows are isomorphisms.
Then third vertical arrow is an isomorphism.
Therefore $F$ is dense, and hence a triangle equivalence.
\end{proof}



\end{document}